\tikzset{negated/.style={
		decoration={markings,
			mark= at position 0.5 with {
				\node[transform shape] (tempnode) {$\times$};
			}
		},
		postaction={decorate}
	}
}
\newtheorem{theorem}{Theorem}
\newtheorem{corollary}[theorem]{Corollary}
\newtheorem{lemma}[theorem]{Lemma}
\newtheorem{proposition}[theorem]{Proposition}
\newtheorem{remark}[theorem]{Remark}
\newtheorem{algorithm}[theorem]{Algorithm}
\newtheorem{example}[theorem]{Example}
\newtheorem{definition}{Definition}[subsection]
\newcommand{\Irr}{\textnormal{Irr}}
\newcommand{\cd}{\textnormal{cd}}
\newcommand{\nl}{\textnormal{nl}}
\newcommand{\lin}{\textnormal{lin}}
\newcommand{\Core}{\textnormal{Core}}
\newcommand{\gal}{\textnormal{Gal}}
\newcommand{\aut}{\textnormal{Aut}}
\title[]{On Matrix Representations of Groups of Order $p^5$ over $\mathbb{Q}$}
\author{Ram Karan Choudhary}
\address{Indian Institute of Technology, Bhubaneswar, Arugul Campus, Jatni, Khurda-752050, India.}
\email{ramkchoudhary1997@gmail.com, rc13@iitbbs.ac.in}
\author{Sunil Kumar Prajapati$^*$}
\address{Indian Institute of Technology, Bhubaneswar, Arugul Campus, Jatni, Khurda-752050, India.}
\email{skprajapati@iitbbs.ac.in}
\thanks{$^{\textbf{*}}$ Corresponding author.}
\subjclass[2020]{primary 20D15; secondary 20C15, 16S34}
\keywords{Rational matrix representations, rational group algebras, Wedderburn decomposition, $p$-groups}
\begin{document}
	
	\begin{abstract}
		 In this article, we determine all inequivalent irreducible rational matrix representations of groups of order $p^5$, where $p$ is an odd prime. We also derive combinatorial formulations for the Wedderburn decomposition of rational group algebras of these $p$-groups, using results from their rational representations.
	\end{abstract}
	\maketitle

	\section{Introduction}
	The construction of matrix representations of a finite group over a field is a longstanding problem in mathematics. Frobenius initiated the study of representations of finite groups over $\mathbb{C}$, and Schur extended this to subfields of $\mathbb{C}$, particularly $\mathbb{R}$ and $\mathbb{Q}$. Although extensive literature exists on computing matrix representations, methods are typically known only for specific types of representations of particular classes of groups. For example, an algorithm for computing an irreducible complex matrix representation affording the character $\chi$ of $G$ (when $\chi(1) \leq 100$) is described in \cite{Dabbaghian, Dixon} and is implemented as the \textsc{Repsn} package in {\sf GAP} \cite{Gap}. However, in general, computing all inequivalent irreducible matrix representations of a finite group over a field $\mathbb{F}$, including $\mathbb{F} = \mathbb{C}$, remains a challenging and fundamental problem. Recently, rational-valued irreducible complex characters of finite groups have been extensively studied (see \cite{Grittini, Navarro, Navarro1, Navarro2}). This paper focuses on constructing all inequivalent irreducible matrix representations of a $p$-group $G$ of order $p^5$ ($p$ an odd prime) over $\mathbb{Q}$. Studying matrix representations of finite groups over $\mathbb{Q}$ is important for various reasons. For example, a core question in rationality theory concerns the realizability of an $\mathbb{F}$-representation of $G$ over its subfields, such as the realizability of a $\mathbb{C}$-representation over $\mathbb{R}$ or $\mathbb{Q}$. Rational representations can be made integral due to Burnside’s result \cite{Burnside}, which states that for a representation $\rho: G \to GL_n(\mathbb{Q})$, there exists a conjugate representation $\tilde{\rho}$ such that $\tilde{\rho}(g) \in GL_n(\mathbb{Z})$ for all $g \in G$, implying that all matrix entries are integers. Matrix representations of finite groups over $\mathbb{Q}$ frequently arise in various branches of mathematics, similar to those over $\mathbb{C}$, enhancing their significance (see \cite{Bouc, Kletzing, Plesken}).
	
	Throughout this article, we denote a finite group by $G$, the set of all irreducible complex characters of $G$ by $\Irr(G)$, and an odd prime by $p$. For $\chi \in \Irr(G)$, we define
	\begin{equation*}
		\Omega(\chi) = m_{\mathbb{Q}}(\chi) \sum_{\sigma \in \gal(\mathbb{Q}(\chi) / \mathbb{Q})} \chi^{\sigma},
	\end{equation*}
	where $m_{\mathbb{Q}}(\chi)$ is the Schur index of $\chi$ over $\mathbb{Q}$. Note that $\Omega(\chi)$ represents the character of an irreducible $\mathbb{Q}$-representation $\rho$ of $G$. Conversely, if $\rho$ is an irreducible $\mathbb{Q}$-representation of $G$, then there exists $\chi \in \Irr(G)$ such that $\Omega(\chi)$ is the character of $\rho$. In \cite{Ram}, we present an algorithm for constructing irreducible rational matrix representations of $p$-groups. For a $p$-group $G$ and $\chi \in \Irr(G)$, constructing an irreducible rational matrix representation affording the character $\Omega(\chi)$ is equivalent to determining a pair $(H, \psi)$, where $H$ is a subgroup of $G$ and $\psi \in \lin(H)$ satisfies $\psi^G = \chi$ and $\mathbb{Q}(\psi) = \mathbb{Q}(\chi)$ (see Algorithm \ref{algorithm}). This pair $(H, \psi)$ is \emph{required pair} for an irreducible rational matrix representation of $G$ affording $\Omega(\chi)$. Moreover, $(H, \psi)$ also facilitates the computation of an irreducible complex matrix representation of $G$ affording the character $\chi$. In this paper, we identify a required pair for each inequivalent irreducible rational matrix representation of groups of order $p^5$. We have implemented our results in \textsc{Magma} \cite{Magma}, with the corresponding code publicly available in \cite{Magma} via a \texttt{GitHub} repository \cite{Repository}. These implementations enable the explicit construction of irreducible rational matrix representations of such $p$-groups using required pairs.
	
	Further, this article explores the Wedderburn decomposition of the rational group algebras of groups of order $p^5$. The study of such decompositions has garnered significant attention in recent research due to their importance in understanding various algebraic structures (see \cite{Herman, Jes-Rio, Rit-Seh}). Significant work on the Wedderburn decomposition of rational group algebras for various families of groups is documented in \cite{BM14, BGO, Jes-Lea-Paq, Jes-Olt-Rio, Olt07}. In these studies, concepts such as the character value field, Shoda pairs, the set of primitive central idempotents, and numerical representations of cyclotomic algebras have been employed to compute the simple components of the rational algebra. Based on this literature, a package in {\sf GAP} \cite{Gap} named \textsc{Wedderga} has been developed. However, in practice, exact computations remain challenging, particularly for large groups. Combinatorial formulations for the Wedderburn decomposition of the rational group algebras associated with certain classes of finite groups have been provided in \cite{Ram3, Ram2, Ram, PW}. In this article, we extend these results and present combinatorial formulations for the Wedderburn decomposition of the rational group algebras of all $p$-groups of order $p^5$, utilizing results related to the rational matrix representations of these groups, thereby offering an application of these findings.
	
    The article is organized as follows. Section \ref{section:notation} introduces the notation, and Section \ref{sec:preliminaries} presents essential preliminary results. We use James' classification \cite{RJ} of $p$-groups of order $p^5$, which categorizes them into 10 isoclinic families. The number of isomorphism classes of irreducible representations of a finite group $G$ over $\mathbb{Q}$ corresponds to the number of conjugacy classes of cyclic subgroups of $G$ (see \cite[Corollary 1, Chapter 13]{Serre}). Section \ref{section:rational representation} describes all inequivalent irreducible rational matrix representations of these groups, either individually or by combining certain families. Finally, Section \ref{section:applications} provides a combinatorial description for the Wedderburn decomposition of rational group algebras for all $p$-groups of order $p^5$.

	\section{Notation}\label{section:notation}
This section defines the notation, following standard conventions. Throughout the paper, $p$ denotes an odd prime. For a finite group $G$, the following notation is used consistently.

\begin{longtable}{cl}
	$G'$ & the commutator subgroup of $G$\\
	$C_G(H)$ & the centralizer subgroup of $H$ in $G$\\
	$|S|$ & the cardinality of a set $S$\\
	$C_G(H)$ & the centralizer subgroup of $H$ for $H\leq G$\\
	$\Irr(G)$ & the set of irreducible complex characters of $G$\\
	$\lin(G)$ & $\{\chi \in \Irr(G) : \chi(1)=1\}$\\
	$\nl(G)$ & $\{\chi \in \Irr(G) : \chi(1) \neq 1\}$\\
	$\Irr^{(m)}(G)$ & $\{\chi \in \Irr(G) : \chi(1)=m\}$\\
	$\cd(G)$ & $\{ \chi(1) : \chi \in \Irr(G) \}$\\
	$\mathbb{F}(\chi)$ & the field obtained by adjoining the values $\{\chi(g) : g\in G\}$ to the field $\mathbb{F}$, for some $\chi\in \Irr(G)$\\
	$m_\mathbb{Q}(\chi)$ & the Schur index of $\chi \in \Irr(G)$ over $\mathbb{Q}$\\
	$\Omega(\chi)$ & $m_{\mathbb{Q}}(\chi)\sum_{\sigma \in \gal(\mathbb{Q}(\chi) / \mathbb{Q})}^{}\chi^{\sigma}$, for $\chi \in \Irr(G)$\\
	$\ker(\chi)$ & $\{g \in G: \chi(g)=\chi(1)\}$, for $\chi \in \Irr(G)$\\
	$\Irr(G|N)$ & $\{\chi \in \Irr(G) : N \nsubseteq \ker(\chi)\}$, where $N \trianglelefteq G$\\
	$\psi^G$ & the induced character of $\psi$ to $G$, where $\psi$ is a character of $H$ for some $H \leq G$\\
	$\Psi^G$ & the induced representation of $\Psi$ to $G$, where $\Psi$ is a representation of $H$ for some $H \leq G$\\
	$\chi \downarrow_H$ & the restriction of a character $\chi$ of $G$ on $H$, where $H \leq G$\\
	$\mathbb{F}G$ & the group ring (algebra) of $G$ with coefficients in $\mathbb{F}$\\
	$M_{n}(D)$ & a full matrix ring of order $n$ over the skewfield $D$\\
	$Z(B)$ & the center of an algebraic structure $B$\\
	$\phi(n)$ &  Euler's totient function\\
	$\zeta_m$ & an $m$-th primitive root of unity\\
\end{longtable}
	
	\section{Preliminaries}\label{sec:preliminaries}
	In this section, we introduce the key prerequisites for this article. We begin with the definition of a \emph{Camina pair}, a term introduced by Camina in \cite{Camina}.
 \begin{definition}
 	Let $N$ be a normal subgroup of $G$. The pair $(G, N)$ is a Camina pair if $1 < N < G$ and, for every $g \in G \setminus N$, $g$ is conjugate to each element of the coset $gN$.
 \end{definition}
 
\noindent A necessary and sufficient condition for the pair $(G, N)$ to be a Camina pair is that $\chi$ vanishes on $G \setminus N$ for every $\chi \in \Irr(G \mid N)$. It is easy to verify that if $(G, N)$ is a Camina pair, then the chain of inclusions $Z(G) \leq N \leq G'$ holds. In \cite{MLL3}, Lewis first explored groups $G$ for which $(G, Z(G))$ forms a Camina pair and established that such a group must be a $p$-group for some prime $p$. The following lemma highlights a connection between the sets $\Irr(G \mid Z(G))$ and $\Irr(Z(G))$ when $(G, Z(G))$ is a Camina pair.

 \begin{lemma}\label{lemma:Caminacharacter}\textnormal{\cite[Lemma 3.3]{SKP}}
 	Let $(G, Z(G))$ be a Camina pair. Then there exists a bijection between the sets $\Irr(G | Z(G))$ and $\Irr(Z(G)) \setminus \{1_{Z(G)}\}$, where $1_{Z(G)}$ is the trivial character of $Z(G)$. For $1_{Z(G)} \neq \mu \in \Irr(Z(G))$, the corresponding $\chi_\mu \in \nl(G)$ is given by
 	\begin{equation}\label{Caminacharacter}
 		\chi_\mu(g) = 
 		\begin{cases}
 			|G/Z(G)|^{\frac{1}{2}} \mu(g) & \text{if } g \in Z(G), \\
 			0 & \text{otherwise.}
 		\end{cases}
 	\end{equation}
 \end{lemma}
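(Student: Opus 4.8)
The plan is to combine the character-theoretic reformulation of the Camina condition stated just above the lemma — every $\chi \in \Irr(G \mid Z(G))$ vanishes on $G \setminus Z(G)$ — with Clifford theory applied to the central subgroup $Z(G)$. Since $Z(G)$ is central, conjugation by $G$ acts trivially on $Z(G)$, hence on $\Irr(Z(G))$; so for any $\chi \in \Irr(G)$, Clifford's theorem forces $\chi \downarrow_{Z(G)} = \chi(1)\,\mu_\chi$ for a unique $\mu_\chi \in \Irr(Z(G))$, and $\chi \in \Irr(G \mid Z(G))$ precisely when $Z(G) \nsubseteq \ker\chi$, i.e. precisely when $\mu_\chi \neq 1_{Z(G)}$. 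This yields a well-defined map
\[
\Phi : \Irr(G \mid Z(G)) \longrightarrow \Irr(Z(G)) \setminus \{1_{Z(G)}\}, \qquad \chi \mapsto \mu_\chi,
\]
and the goal is to show $\Phi$ is a bijection whose inverse $\mu \mapsto \chi_\mu := \Phi^{-1}(\mu)$ satisfies \eqref{Caminacharacter}.

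For injectivity, suppose $\chi_1, \chi_2 \in \Irr(G \mid Z(G))$ with $\mu_{\chi_1} = \mu_{\chi_2} = \mu$. Both vanish off $Z(G)$ by the Camina hypothesis, so
\begin{align*}
	[\chi_1, \chi_2]_G &= \frac{1}{|G|}\sum_{z \in Z(G)} \chi_1(z)\overline{\chi_2(z)} \\
	&= \frac{\chi_1(1)\chi_2(1)}{|G|}\sum_{z \in Z(G)} |\mu(z)|^2 = \frac{\chi_1(1)\chi_2(1)\,|Z(G)|}{|G|},
\end{align*}
using that $\mu$ is linear, so $|\mu(z)| = 1$. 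The right-hand side is strictly positive, and since irreducible characters are orthonormal this forces $\chi_1 = \chi_2$. Taking $\chi_1 = \chi_2 = \chi$ in the same computation gives $\chi(1)^2\,|Z(G)| = |G|$, i.e. $\chi(1) = |G/Z(G)|^{1/2}$; together with $\chi \downarrow_{Z(G)} = \chi(1)\mu$ and the vanishing of $\chi$ on $G \setminus Z(G)$, this is exactly formula \eqref{Caminacharacter}. Since $Z(G) < G$ we have $\chi(1) > 1$, so $\chi \in \nl(G)$, as asserted.

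For surjectivity, given $1_{Z(G)} \neq \mu \in \Irr(Z(G))$, choose any irreducible constituent $\chi$ of the induced character $\mu^G$. By Frobenius reciprocity $\mu$ is a constituent of $\chi\downarrow_{Z(G)} = \chi(1)\,\mu_\chi$, hence $\mu_\chi = \mu \neq 1_{Z(G)}$, so $\chi \in \Irr(G \mid Z(G))$ and $\Phi(\chi) = \mu$. I expect the only genuine subtlety to lie in the injectivity/normalization step — specifically, invoking the Camina hypothesis in its character-theoretic form at the right moment so that the inner product over $G$ collapses to a sum over $Z(G)$, after which the dimension formula $\chi(1) = |G/Z(G)|^{1/2}$ and the explicit values in \eqref{Caminacharacter} drop out; the remaining arguments are routine applications of Clifford's theorem and orthogonality of irreducible characters.
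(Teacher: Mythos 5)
Your proof is correct and complete: the Clifford-theoretic identification $\chi\downarrow_{Z(G)}=\chi(1)\mu_\chi$, the collapse of the orthogonality sum to $Z(G)$ via the Camina vanishing condition (giving both injectivity and $\chi(1)=|G/Z(G)|^{1/2}$), and Frobenius reciprocity for surjectivity together establish everything asserted. The paper itself gives no proof — it quotes the lemma from \cite[Lemma 3.3]{SKP} — and your argument is the standard one for this classical fact about Camina pairs, so there is nothing to reconcile.
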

 
 A pair $(G, N)$ is a \emph{generalized Camina pair} if $N$ is a normal subgroup of $G$ and every non-linear irreducible complex character of $G$ vanishes outside $N$ (see \cite{MLL}). A group $G$ is a \emph{VZ-group} if $(G, Z(G))$ forms a generalized Camina pair. For any VZ-group $G$, it holds that $G' \subseteq Z(G)$, $\cd(G) = \{1, |G/Z(G)|^{\frac{1}{2}}\}$ (see \cite{FM}), and by \cite[Lemma 2.4]{MLL2} both $G/Z(G)$ and $G'$ are elementary abelian $p$-groups for some prime $p$. Furthermore, $|\nl(G)|=|Z(G)| - |Z(G)/G'|$, and there exists a one-to-one correspondence between the sets $\nl(G)$ and $\Irr(Z(G) \mid G')$ (see \cite[Subsection 3.1]{SKP1}). For each $\mu \in \Irr(Z(G) \mid G')$, the corresponding character $\chi_\mu \in \nl(G)$ is given by 
 \begin{equation}\label{VZ}
 	\chi_\mu(g) = 
 	\begin{cases}
 		|G/Z(G)|^{\frac{1}{2}} \mu(g) & \text{if } g \in Z(G), \\
 		0 & \text{otherwise.}
 	\end{cases}
 \end{equation}
 
 Next, we describe an \emph{algorithm} for constructing irreducible rational matrix representations of a $p$-group $G$. Let $\chi \in \Irr(G)$, and consider a splitting field $\mathbb{F}$ for $G$. There exists a unique irreducible representation $\rho$ of $G$ over $\mathbb{Q}$ such that $\chi$ appears as an irreducible constituent of $\rho \otimes_{\mathbb{Q}} \mathbb{F}$ with multiplicity $m_{\mathbb{Q}}(\chi)$. For $\psi \in \lin(G)$, an irreducible matrix representation of $G$ over $\mathbb{Q}$ affording the character $\Omega(\psi)$ can be constructed using Lemma \ref{lemma:YamadaLinear}.
 \begin{lemma}\cite[Proposition 1]{Y}\label{lemma:YamadaLinear}
 	Let $\psi \in \lin(G)$ and let $N = \ker(\psi)$ with $n = [G : N]$. Assume $G = \cup_{i = 0}^{n-1} Ny^i$. Then
 	$$\psi(xy^i) = \zeta_n^i, \quad (0 \leq i < n; \, x \in N).$$
 	Suppose $f(X) = X^s - a_{s-1}X^{s-1} - \cdots - a_1X - a_0$ is the irreducible polynomial of $\zeta_n$ over $\mathbb{Q}$, where $s = \phi(n)$. Then $\Psi$ is an irreducible rational matrix representation of $G$ that affords the character $\Omega(\psi)$, and is given by
 	$$\Psi(xy^i) = \left(\begin{array}{ccccc}
 		0 & 1 & 0 & \cdots & 0\\
 		0 & 0 & 1 & \cdots & 0\\
 		\vdots & \vdots & \vdots & \ddots \\
 		0 & 0 & \cdots & 0 & 1\\
 		a_0 & a_1 & \cdots & \cdots & a_{s-1}
 	\end{array}\right)^i, \quad (0 \leq i < n; \, x \in N).$$
 \end{lemma}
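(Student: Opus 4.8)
The plan is to realise $\Psi$ as the inflation to $G$ of a faithful rational representation of the cyclic quotient $G/N$, and then to match its character against $\Omega(\psi)$. First I would deal with the formula for $\psi$: since $\psi$ is linear it is a homomorphism $G \to \mathbb{C}^\times$ with kernel $N$, so $G/N$ is isomorphic to a finite subgroup of $\mathbb{C}^\times$, hence cyclic of order $n = [G:N]$; from the decomposition $G = \cup_{i=0}^{n-1} Ny^i$ the coset $yN$ generates $G/N$, whence $\psi(y)$ is a primitive $n$-th root of unity, which we call $\zeta_n$. Then $\psi(xy^i) = \psi(x)\psi(y)^i = \zeta_n^i$ for all $x \in N$ and $0 \le i < n$, which is the first displayed identity; in particular the value field $\mathbb{Q}(\psi)$ equals $\mathbb{Q}(\zeta_n)$, since the values of $\psi$ are precisely the $n$-th roots of unity.

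Next, let $C$ be the $s \times s$ companion matrix displayed in the statement, that is, the companion matrix of $f(X) = \Phi_n(X)$, the minimal polynomial of $\zeta_n$ over $\mathbb{Q}$, with $s = \phi(n)$. The minimal (and characteristic) polynomial of a companion matrix is its defining polynomial, so from $f(X) \mid X^n - 1$ we get $C^n = I_s$, and since $\zeta_n$ is a root of $f$, hence an eigenvalue of $C$, of multiplicative order $n$, the matrix $C$ has order exactly $n$ in $GL_s(\mathbb{Q})$. Consequently $y^iN \mapsto C^i$ is a well-defined injective homomorphism $G/N \to GL_s(\mathbb{Q})$ (the representatives $y^i$, $0 \le i < n$, lie in distinct cosets of $N$, and $G/N$ is cyclic of order $n$), and inflating it along $G \twoheadrightarrow G/N$ yields exactly $\Psi(xy^i) = C^i$; one checks the homomorphism property directly using $N \trianglelefteq G$, $y^n \in N$, and $C^n = I_s$. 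Since $\Phi_n \in \mathbb{Z}[X]$, the matrices $\Psi(g)$ have integer entries, so $\Psi$ is a rational matrix representation of $G$ of degree $s$.

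It remains to compute the character of $\Psi$ and to verify irreducibility. Over $\mathbb{C}$ the matrix $C$ is diagonalisable with eigenvalues the roots of $f$, namely the primitive $n$-th roots of unity $\{\sigma(\zeta_n) : \sigma \in \gal(\mathbb{Q}(\zeta_n)/\mathbb{Q})\}$. Hence, for $x \in N$ and $0 \le i < n$, with $\sigma$ ranging over $\gal(\mathbb{Q}(\psi)/\mathbb{Q}) = \gal(\mathbb{Q}(\zeta_n)/\mathbb{Q})$,
$$\mathrm{tr}\,\Psi(xy^i) = \mathrm{tr}\,C^i = \sum_{\sigma} \sigma(\zeta_n)^i = \sum_{\sigma} \sigma\!\big(\psi(xy^i)\big) = \sum_{\sigma} \psi^\sigma(xy^i),$$
so the character of $\Psi$ is $\sum_{\sigma \in \gal(\mathbb{Q}(\psi)/\mathbb{Q})} \psi^\sigma$. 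Because $\psi$ is linear it is afforded by a one-dimensional representation over $\mathbb{Q}(\psi)$, so $m_{\mathbb{Q}}(\psi) = 1$ and this character is exactly $\Omega(\psi)$. Finally, by the discussion preceding the lemma, $\Omega(\psi)$ is the character of the unique irreducible $\mathbb{Q}$-representation of $G$ whose complexification contains $\psi$; since in characteristic $0$ a representation is determined up to equivalence by its character, $\Psi$ is that representation, and in particular it is irreducible over $\mathbb{Q}$. (Alternatively, one sees $\mathrm{End}_{\mathbb{Q}G}(\Psi) \cong \mathbb{Q}(\zeta_n)$, a field, which already forces irreducibility.)

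There is no deep obstacle here; the content is bookkeeping, and the steps most prone to error are the verification that $\Psi$ is genuinely a homomorphism and the identification $\mathbb{Q}(\psi) = \mathbb{Q}(\zeta_n)$. Presenting $\Psi$ as the inflation of the single-generator representation $yN \mapsto C$ of the cyclic group $G/N$ makes both transparent, since the only properties required of $C$ are $C^n = I_s$ and $\mathrm{ord}(C) = n$.
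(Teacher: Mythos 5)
Your proof is correct. Note that the paper does not prove this lemma at all --- it is quoted verbatim from Yamada's paper \cite{Y} as a known result --- so there is nothing internal to compare against; your argument (realising $\Psi$ as the inflation of $yN\mapsto C$ for $C$ the companion matrix of $\Phi_n$, computing its trace via the Galois orbit of $\zeta_n$, and deducing irreducibility from the identification of the character with $\Omega(\psi)$, or equivalently from the commutant being the field $\mathbb{Q}(\zeta_n)$) is the standard one and fills that gap cleanly.
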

 
\noindent Algorithm \ref{algorithm} outlines a method for constructing an irreducible rational matrix representation of a $p$-group $G$ that affords the character $\Omega(\chi)$ for any $\chi \in \Irr(G)$.
 
 \begin{algorithm}\cite[Algorithm 15]{Ram} \label{algorithm}
 	Input: An irreducible complex character $\chi$ of a finite $p$-group $G$, where $p$ is an odd prime.
 	\begin{enumerate}
 		\item Find a pair $(H, \psi)$, where $H \leq G$ and $\psi \in \lin(H)$, such that $\psi^G = \chi$ and $\mathbb{Q}(\psi) = \mathbb{Q}(\chi)$.
 		\item Find an irreducible $\mathbb{Q}$-representation $\Psi$ of $H$ that affords the character $\Omega(\psi)$.
 		\item Induce $\Psi$ to $G$.
 	\end{enumerate}
 	Output: $\Psi^G$, an irreducible $\mathbb{Q}$-representation of $G$ whose character is $\Omega(\chi)$.
 \end{algorithm}
 
 \noindent Observe that constructing an irreducible rational matrix representation of a finite $p$-group $G$ that affords the character $\Omega(\chi)$, where $\chi \in \Irr(G)$, reduces to identifying a \emph{required pair} $(H, \psi)$, as described in Algorithm \ref{algorithm}. In general, such a pair is not unique. Furthermore, a required pair $(H, \psi)$ can also be used to construct an irreducible complex matrix representation of $G$ that affords the character $\chi$.\\
 
We close this section by quoting some well-known results which we also use in the upcoming sections. 
\begin{lemma}\cite[Corollary 10.14]{I}\label{lemma:schurindexpgroup}
	Let $G$ be a $p$-group (an odd prime), and let $\chi \in \Irr(G)$. Then  $m_\mathbb{Q}(\chi) = 1$.
\end{lemma}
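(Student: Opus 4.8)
The plan is to prove the lemma by localizing the Schur division algebra attached to $\chi$ and appealing to the local--global principle for Brauer groups of number fields; in this generality the statement is due to Roquette. Write $|G| = p^{m}$. We may assume $\chi \neq 1_G$, since the trivial character plainly has Schur index $1$. Set $K = \mathbb{Q}(\chi)$ and let $A$ be the Wedderburn component of $\mathbb{Q}G$ corresponding to the Galois orbit of $\chi$, so that $A \cong M_n(D)$ for a division ring $D$ with $Z(D) = K$ and $m_\mathbb{Q}(\chi) = \operatorname{ind}(D)$; it therefore suffices to prove $D = K$. Since $K$ is an algebraic number field, the Albert--Brauer--Hasse--Noether exact sequence
\[
0 \longrightarrow \operatorname{Br}(K) \longrightarrow \bigoplus_{v} \operatorname{Br}(K_v) \longrightarrow \mathbb{Q}/\mathbb{Z} \longrightarrow 0,
\]
in which the last map sends a family of classes to the sum of their local Hasse invariants, reduces the problem to showing that $\operatorname{inv}_v(D) = 0$ at every place $v$ of $K$. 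I would check this place by place; the hypothesis that $p$ is odd is used only at the archimedean places.

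For the archimedean places, observe that $G$ has odd order, and that in an odd-order group the only real-valued irreducible character is the trivial one: if $g$ is conjugate to $g^{-1}$, say $g^{x} = g^{-1}$, then $x^{2}$ centralizes $g$, and $x \in \langle x^{2} \rangle$ because $x$ has odd order, forcing $g = g^{-1}$ and hence $g = 1$; thus $\{1\}$ is the only real conjugacy class. Consequently $\chi \neq \overline{\chi}$, so $K = \mathbb{Q}(\chi)$ is not a real field. On the other hand $K \subseteq \mathbb{Q}(\zeta_{p^{m}})$, which is an abelian extension of $\mathbb{Q}$, so $K/\mathbb{Q}$ is Galois; hence $\operatorname{Gal}(K/\mathbb{Q})$ acts transitively on the archimedean places of $K$, and $K$ is therefore either totally real or totally imaginary. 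Being non-real, $K$ is totally imaginary, every archimedean completion $K_v$ equals $\mathbb{C}$, and $\operatorname{inv}_v(D) = 0$ at all such $v$.

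For the finite places, suppose first that $v$ lies over a rational prime $q \neq p$, so that $q \nmid |G|$. Then $\mathbb{Z}_q G$ is a maximal order in $\mathbb{Q}_q G$, and every simple component of $\mathbb{Q}_q G$ is a full matrix algebra over a field (this uses Wedderburn's theorem that finite division rings are commutative); hence $D \otimes_K K_v$ is split and $\operatorname{inv}_v(D) = 0$. Finally, $p$ is totally ramified in $\mathbb{Q}(\zeta_{p^{m}})$, hence also in its subfield $K$, so $K$ has a \emph{unique} place $v_0$ above $p$. Combining the previous steps, $\operatorname{inv}_w(D) = 0$ for every place $w \neq v_0$; since $\sum_w \operatorname{inv}_w(D) = 0$ in $\mathbb{Q}/\mathbb{Z}$, we conclude $\operatorname{inv}_{v_0}(D) = 0$ as well. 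Thus $D$ is everywhere locally trivial, so $D = K$ and $m_\mathbb{Q}(\chi) = 1$.

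I expect the place above $p$ to be the only genuine obstacle: there is no direct local argument forcing the $p$-adic Schur index of a character of a $p$-group to be $1$, and what saves the situation is that $p$ has a single place in the totally ramified field $\mathbb{Q}(\chi)$, so that the sum formula for Hasse invariants determines the remaining invariant at no cost. The other ingredients --- the identification of $m_\mathbb{Q}(\chi)$ with $\operatorname{ind}(D)$, the inclusion $\mathbb{Q}(\chi) \subseteq \mathbb{Q}(\zeta_{|G|})$, and the maximal-order statement for $q \neq p$ --- are classical and would simply be quoted.
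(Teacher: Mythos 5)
Your argument is correct, and it is essentially Roquette's local--global proof of this theorem. The paper itself offers no proof at all: it simply quotes the statement from Isaacs \cite[Corollary 10.14]{I}, where it is derived by purely character-theoretic means from the general theory of the Schur index. Your route instead identifies $m_{\mathbb{Q}}(\chi)$ with the index of the division algebra $D$ in the corresponding Wedderburn component (exactly the identification the paper later records in Lemma \ref{Reiner}) and kills $D$ place by place via the Albert--Brauer--Hasse--Noether sequence. All the individual steps are sound: the Burnside argument showing an odd-order group has no nontrivial real conjugacy classes, hence that $\mathbb{Q}(\chi)$ is a non-real abelian (so totally imaginary) field, correctly disposes of the archimedean invariants; the total ramification of $p$ in $\mathbb{Q}(\zeta_{p^m})$ gives the single place above $p$, so the reciprocity sum forces the last invariant to vanish --- this is precisely where the argument would break for $p=2$, since the real place then survives and produces the quaternionic obstruction. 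The one point I would tighten is the step at finite places $v \mid q$ with $q \neq p$: Wedderburn's little theorem alone does not do the job, because the residue ring of a maximal order in a noncommutative local division algebra is already a (commutative) finite field. What one actually needs is either the dimension count comparing $\dim_{\mathbb{F}_q} \mathbb{F}_qG$ with $\dim_{\mathbb{F}_q}\bigl(\mathbb{Z}_qG/\mathrm{rad}(\mathbb{Z}_qG)\bigr)$, which forces every local component to be a split matrix algebra over an unramified field, or the standard cyclotomic-algebra computation showing the Hasse invariant vanishes because the cocycle takes values in roots of unity of order prime to $q$. Since you explicitly propose to quote this classical fact (the local Schur index is trivial at primes not dividing $|G|$), this is a presentational quibble rather than a gap; the proof as a whole is complete and gives strictly more information than the paper's bare citation.
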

We now introduce an equivalence relation on $\Irr(G)$ based on \emph{Galois conjugacy} over $\mathbb{Q}$. Given $\chi, \psi \in \Irr(G)$, they are Galois conjugates over $\mathbb{Q}$ if $\mathbb{Q}(\chi) = \mathbb{Q}(\psi)$ and there exists $\sigma \in \gal(\mathbb{Q}(\chi) / \mathbb{Q})$ such that $\chi^\sigma = \psi$. Each distinct Galois conjugacy class corresponds to a unique irreducible rational representation of $G$. Furthermore, if $G$ is a finite group and $\chi, \psi \in \Irr(G)$ are Galois conjugates over $\mathbb{Q}$, then $\ker(\chi) = \ker(\psi)$.
  \begin{lemma}\cite[Lemma 29]{Ram}
  	Let $G$ is a finite group, and let $\chi, \psi \in \lin(G)$ such that $\ker(\chi) = \ker(\psi)$. Then $\chi$ and $\psi$ are Galois conjugates over $\mathbb{Q}$.
  \end{lemma}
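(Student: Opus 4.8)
The plan is to reduce everything to a faithful linear character of a cyclic group and then invoke the transitivity of the cyclotomic Galois action on primitive roots of unity. Set $N = \ker(\chi) = \ker(\psi)$. Because $\chi \in \lin(G)$, it factors through $G/N$ and induces an injection $\bar{\chi} : G/N \to \mathbb{C}^\times$; the image is a finite subgroup of $\mathbb{C}^\times$, hence cyclic, so $G/N$ is cyclic, say of order $n$. The same argument applies to $\psi$ with the \emph{same} subgroup $N$, so the problem becomes one of comparing two faithful linear characters of a single cyclic group $G/N$ of order $n$.

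Next I would fix a generator $gN$ of $G/N$ and put $\zeta = \bar{\chi}(gN)$ and $\eta = \bar{\psi}(gN)$. Faithfulness forces $gN$ to have order exactly $n$ in $G/N$, so both $\zeta$ and $\eta$ are primitive $n$-th roots of unity; write $\eta = \zeta^{a}$ with $\gcd(a, n) = 1$. Every value of $\chi$ is a power of $\zeta$, and $\zeta$ itself occurs as a value, so $\mathbb{Q}(\chi) = \mathbb{Q}(\zeta) = \mathbb{Q}(\zeta_n)$; similarly $\mathbb{Q}(\psi) = \mathbb{Q}(\eta) = \mathbb{Q}(\zeta_n)$. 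In particular $\mathbb{Q}(\chi) = \mathbb{Q}(\psi)$, which is half of what Galois conjugacy requires.

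It remains to produce the Galois automorphism. Let $\sigma \in \gal(\mathbb{Q}(\zeta_n)/\mathbb{Q}) = \gal(\mathbb{Q}(\chi)/\mathbb{Q})$ be the unique automorphism with $\sigma(\zeta) = \zeta^{a}$; this exists precisely because $a$ is invertible modulo $n$. For an arbitrary $x \in G$, write $xN = (gN)^{k}$; then $\chi(x) = \zeta^{k}$, hence $\chi^{\sigma}(x) = \sigma(\zeta^{k}) = \zeta^{ak} = \eta^{k} = \psi(x)$. Thus $\chi^{\sigma} = \psi$, and $\chi$, $\psi$ are Galois conjugates over $\mathbb{Q}$.

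I do not anticipate a genuine obstacle; the argument rests only on the standard facts that a finite subgroup of $\mathbb{C}^{\times}$ is cyclic and that $\gal(\mathbb{Q}(\zeta_n)/\mathbb{Q})$ acts transitively on primitive $n$-th roots of unity. The only mild care needed is the bookkeeping confirming that the chosen $\sigma$ really carries the class function $\chi$ to $\psi$ on \emph{all} of $G$, not merely on the generator $gN$, which is immediate once $\chi$ and $\psi$ are described explicitly on the cyclic quotient $G/N$.
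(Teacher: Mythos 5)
Your proof is correct and complete: reducing to two faithful linear characters of the cyclic quotient $G/N$ and using that $\gal(\mathbb{Q}(\zeta_n)/\mathbb{Q})$ acts transitively on primitive $n$-th roots of unity is exactly the standard argument for this fact. The paper itself quotes the lemma from \cite[Lemma 29]{Ram} without reproducing a proof, and your argument is the expected one.
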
 
Finally, for a finite abelian group $G$ and $\chi, \psi \in \Irr(G)$, we say $\chi$ and $\psi$ are equivalent if $\ker(\chi) = \ker(\psi)$.
\begin{lemma}\label{lemma:Ayoub}\cite[Lemma 1]{Ayoub}
	Let $G$ be a finite abelian group, and let $a_d$ be the number of cyclic subgroups of $G$ of order $d$. Then the number of inequivalent characters $\chi$ such that $\mathbb{Q}(\chi)=\mathbb{Q}(\zeta_d)$ is $a_d$.
\end{lemma}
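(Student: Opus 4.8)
The plan is to pass to the Pontryagin dual $\widehat{G}=\Irr(G)$, which is isomorphic to $G$ because $G$ is abelian, and to reinterpret both sides of the claimed equality as counts inside $\widehat{G}$. The first thing I would record is that for $\chi\in\Irr(G)$ of multiplicative order $n$ one has $\mathbb{Q}(\chi)=\mathbb{Q}(\zeta_n)$: indeed $\chi$ factors through the cyclic group $G/\ker(\chi)$, and since $\chi$ has order $n$ its image is a cyclic subgroup of $\mathbb{C}^\times$ of order exactly $n$, hence equals $\langle\zeta_n\rangle$, so the values of $\chi$ generate $\mathbb{Q}(\zeta_n)$. In the situation relevant here $d$ is a power of the odd prime $p$, and then $\mathbb{Q}(\chi)=\mathbb{Q}(\zeta_d)$ holds precisely when $\chi$ has order exactly $d$; so it suffices to count the equivalence classes among the characters of order $d$.

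Next I would count the characters of order $d$ directly. Under the isomorphism $\widehat{G}\cong G$, the number of $\chi\in\Irr(G)$ of order $d$ equals the number of elements of $G$ of order $d$. Every such element generates a cyclic subgroup of order $d$, each cyclic subgroup of order $d$ has exactly $\phi(d)$ generators, and distinct cyclic subgroups of order $d$ have disjoint generator sets; hence the number of characters of order $d$ is $a_d\,\phi(d)$.

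It then remains to show that every equivalence class consisting of order-$d$ characters has exactly $\phi(d)$ elements, so that the number of such classes is $a_d\,\phi(d)/\phi(d)=a_d$. Here I would invoke \cite[Lemma 29]{Ram} together with the observation recorded above that Galois conjugate irreducible characters have equal kernels: these two facts together say that, for linear characters, the equivalence class of $\chi$ under ``same kernel'' coincides with its Galois conjugacy class $\{\chi^\sigma:\sigma\in\gal(\mathbb{Q}(\chi)/\mathbb{Q})\}$. For $\chi$ of order $d$ we have $\mathbb{Q}(\chi)=\mathbb{Q}(\zeta_d)$, and the conjugates $\chi^\sigma$ are pairwise distinct, since a value of $\chi$ is a primitive $d$-th root of unity $\omega$, an element $\sigma\in\gal(\mathbb{Q}(\zeta_d)/\mathbb{Q})$ is determined by $\sigma(\omega)$, and $\chi^\sigma(g)=\sigma(\chi(g))$ then separates distinct $\sigma$. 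Thus the class has size $[\mathbb{Q}(\zeta_d):\mathbb{Q}]=\phi(d)$, and combining the three steps yields the claim.

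The point I would be most careful about — really the only delicate one — is the equivalence between the hypothesis $\mathbb{Q}(\chi)=\mathbb{Q}(\zeta_d)$ and the statement ``$\chi$ has order $d$''. This can fail for arbitrary $d$ (for example $\mathbb{Q}(\zeta_3)=\mathbb{Q}(\zeta_6)$, so in $\mathbb{Z}/6$ both the order-$3$ and the order-$6$ characters satisfy $\mathbb{Q}(\chi)=\mathbb{Q}(\zeta_3)$), so one must read the lemma with $d$ restricted to the range where $\mathbb{Q}(\zeta_d)$ determines the order of $\chi$ — which is automatic in every application in this paper, where $G$ is a $p$-group with $p$ odd and $d$ is a power of $p$, since no element of $G$ then has order $2d$. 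Once this is fixed, the remainder is the elementary bookkeeping above and presents no real obstacle.
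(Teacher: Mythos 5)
Your argument is correct, and there is nothing in the paper to compare it against: Lemma \ref{lemma:Ayoub} is quoted from Ayoub--Ayoub without proof, so you have supplied the missing standard argument. The three steps — (i) $\mathbb{Q}(\chi)=\mathbb{Q}(\zeta_n)$ when $\chi$ has order $n$, (ii) counting order-$d$ characters as $a_d\,\phi(d)$ via $\widehat{G}\cong G$, and (iii) identifying each same-kernel class with a full Galois orbit of size $\phi(d)$ using \cite[Lemma 29]{Ram} and the distinctness of the conjugates — are all sound. (A marginally slicker route for (ii)--(iii) is to note that the classes in question correspond to subgroups $K$ with $G/K$ cyclic of order $d$, each class consisting of the $\phi(d)$ faithful characters of $G/K$, and that such $K$ are in bijection with cyclic subgroups of order $d$ by duality; but this is the same computation.) Your caveat is also well taken: as literally stated the lemma fails for general $d$ (e.g.\ $\mathbb{Q}(\zeta_d)=\mathbb{Q}(\zeta_{2d})$ for odd $d$, and $\mathbb{Q}(\zeta_1)=\mathbb{Q}(\zeta_2)$), and the correct reading is that $d$ should determine $|G/\ker\chi|$ — automatic throughout this paper since $G$ is always a $p$-group for odd $p$ and $d$ a power of $p$. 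No gaps.
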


\section{Rational matrix representations} \label{section:rational representation}
In this section, we classify all inequivalent irreducible rational matrix representations of groups of order $p^5$. The non-linear irreducible complex characters of such groups were studied in \cite{SKP}. However, we introduce a technique to obtain these characters when needed. Hall \cite{PH} introduced isoclinism as a generalization of isomorphism for classifying $p$-groups, and James \cite{RJ} later used it to classify $p$-groups up to order $p^6$. We adopt the notation and classification from \cite{RJ}, where groups of order $p^5$ are categorized into 10 isoclinic families, denoted $\Phi_1, \Phi_2, \dots, \Phi_{10}$ (see \cite[Subsection 4.5]{RJ}). $\Phi_1$ consists of all abelian groups of order $p^5$. We now examine the rational matrix representations of groups in the remaining families.
	
	\subsection{Groups belonging to $\Phi_2$ and $\Phi_5$} \label{subsec:Phi2 and Phi5}
	In this subsection, we address the groups belonging to $\Phi_2$ and $\Phi_5$. Note that if $G\in \Phi_2 \cup \Phi_5$, then $G$ is a VZ $p$-group (see \cite[Lemma 5.1]{SKP}). Theorem \ref{thm:reqpairPhi2} provides a detailed description of the rational matrix representations of all groups of order $p^5$ in $\Phi_2$.
	
	\begin{theorem}\label{thm:reqpairPhi2}
		Let $G$ be a group of order $p^5$ such that $G \in \Phi_2$. Then we have the following.
		\begin{enumerate}
			\item Table $\ref{t:1}$ determines all inequivalent irreducible rational matrix representations of $G$ whose kernels do not contain $G'$, where $G \in \Phi_2 \setminus \{\Phi_2(32){a_2}, \Phi_2(221)d\}$.
			\begin{tiny}
				\begin{longtable}[c]{|c|c|c|c|c|}
					\caption{A required pair $(H, \psi_\mu)$ to construct an irreducible rational matrix representation of $G \in \Phi_{2}$ that affords the character $\Omega(\chi_\mu)$, where $\chi_\mu \in \nl(G)$ (as defined in \eqref{VZp^5Phi_2}). \label{t:1}}\\
					\hline
					Group $G$ & $Z(G)$ & $G'$ & $H$ & $\psi_\mu$ \\
					\hline
					\endfirsthead
					\hline
					\multicolumn{5}{|c|}{Continuation of Table $\ref{t:1}$}\\
					\hline
					Group $G$ & $Z(G)$ & $G'$ & $H$ & $\psi_\mu$ \\
					\hline
					\endhead
					\hline
					\endfoot
					\hline
					\endlastfoot
					
					\hline
					\vtop{\hbox{\strut $\Phi_2(311)a = \langle \alpha, \alpha_1, \alpha_2, \gamma : [\alpha_1, \alpha]=\alpha^{p^2}=\alpha_2,$}\hbox{\strut $\alpha_1^p=\alpha_2^p=\gamma^p=1 \rangle$}}  & $\langle \alpha^p, \gamma \rangle$ & $\langle \alpha^{p^2} \rangle$ & $\langle \alpha^p, \alpha_1, \gamma \rangle$ & $\psi_\mu(h) = \begin{cases}
						\mu(\alpha^p)  &\quad \text{ if } h=\alpha^p,\\
						1  &\quad \text{ if } h=\alpha_1,\\
						\mu(\gamma)  &\quad \text{ if } h=\gamma\\
					\end{cases}$\\
					
					\hline \vtop{\hbox{\strut $\Phi_2(221)a = \langle \alpha, \alpha_1, \alpha_2, \gamma : [\alpha_1, \alpha]=\alpha^p=\alpha_2,$ }\hbox{\strut $ \alpha_1^{p^2}=\alpha_2^p=\gamma^p=1 \rangle$}} & $\langle \alpha^p, \alpha_1^p, \gamma \rangle$ & $\langle \alpha^p \rangle$ & \vtop{\hbox{\strut $\langle  \alpha^{-i}\alpha_1, \alpha^p, \gamma \rangle$}\hbox{\strut ($0 \leq i \leq p-1$)}} & $\psi_\mu(h) = \begin{cases}
						1  &\quad \text{ if } h=\alpha^{-i}\alpha_1,\\
						\mu(\alpha^p)  &\quad \text{ if } h=\alpha^p,\\
						\mu(\gamma)  &\quad \text{ if } h=\gamma\\
					\end{cases}$\\
					
					\hline \vtop{\hbox{\strut $\Phi_2(221)b = \langle \alpha, \alpha_1, \alpha_2, \gamma : [\alpha_1, \alpha]=\alpha^p=\alpha_2,$}\hbox{\strut $ \alpha_1^p=\alpha_2^p=\gamma^{p^2}=1 \rangle$}} & $\langle \alpha^p, \gamma \rangle$ & $\langle \alpha^p \rangle$ & $\langle \alpha^p, \alpha_1, \gamma \rangle$ & $\psi_\mu(h) = \begin{cases}
						\mu(\alpha^p)  &\quad \text{ if } h=\alpha^p,\\
						1  &\quad \text{ if } h=\alpha_1,\\
						\mu(\gamma)  &\quad \text{ if } h=\gamma\\
					\end{cases}$\\
					
					\hline\vtop{\hbox{\strut $\Phi_2(2111)a = \langle \alpha, \alpha_1, \alpha_2, \gamma, \delta : [\alpha_1, \alpha]=\alpha^p=\alpha_2,$} \hbox{\strut $ \alpha_1^p=\alpha_2^p=\gamma^p=\delta^p=1 \rangle$ } } &$\langle \alpha^p, \gamma, \delta \rangle$ &$\langle \alpha^p \rangle$ &$\langle \alpha^p, \alpha_1, \gamma, \delta \rangle$ & $\psi_\mu(h) = \begin{cases}
						\mu(\alpha^p)  &\quad \text{ if } h=\alpha^p,\\
						1  &\quad \text{ if } h=\alpha_1,\\
						\mu(\gamma)  &\quad \text{ if } h=\gamma,\\
						\mu(\delta)  &\quad \text{ if } h=\delta\\
					\end{cases}$\\
					
					\hline \vtop{\hbox{\strut $\Phi_2(2111)b = \langle \alpha, \alpha_1, \alpha_2, \gamma, \delta : [\alpha_1, \alpha]=\gamma^p=\alpha_2,$} \hbox{\strut $ \alpha^p=\alpha_1^p=\alpha_2^p =\delta^p=1 \rangle$ } } & $\langle \gamma, \delta \rangle$ & $\langle \gamma^p \rangle$ & $\langle \alpha, \gamma, \delta \rangle$ & $\psi_\mu(h) = \begin{cases}
						1  &\quad \text{ if } h=\alpha,\\
						\mu(\gamma)  &\quad \text{ if } h=\gamma,\\
						\mu(\delta)  &\quad \text{ if } h=\delta\\
					\end{cases}$\\
					
					\hline \vtop{\hbox{\strut $\Phi_2(2111)c = \langle \alpha, \alpha_1, \alpha_2, \gamma: [\alpha_1, \alpha]=\alpha_2,$} \hbox{\strut $ \alpha^{p^2}=\alpha_1^p=\alpha_2^p =\gamma^p=1 \rangle$ } } & $\langle \alpha^p, \alpha_2, \gamma \rangle$ & $\langle \alpha_2 \rangle$ & $\langle \alpha^p, \alpha_1, \alpha_2, \gamma \rangle$ & $\psi_\mu(h) = \begin{cases}
						\mu(\alpha^p)  &\quad \text{ if } h=\alpha^p,\\
						1  &\quad \text{ if } h=\alpha_1,\\
						\mu(\alpha_2)  &\quad \text{ if } h=\alpha_2,\\
						\mu(\gamma)  &\quad \text{ if } h=\gamma\\
					\end{cases}$\\
					
					\hline \vtop{\hbox{\strut $\Phi_2(2111)d = \langle \alpha, \alpha_1, \alpha_2, \gamma: [\alpha_1, \alpha]=\alpha_2,$} \hbox{\strut $ \alpha^p=\alpha_1^p=\alpha_2^p =\gamma^{p^2}=1 \rangle$ } } & $\langle \alpha_2, \gamma \rangle$ & $\langle \alpha_2 \rangle$ & $\langle \alpha_1, \alpha_2, \gamma \rangle$ & $\psi_\mu(h) = \begin{cases}
						1  &\quad \text{ if } h=\alpha_1,\\
						\mu(\alpha_2)  &\quad \text{ if } h=\alpha_2,\\
						\mu(\gamma)  &\quad \text{ if } h=\gamma\\
					\end{cases}$\\
					
					\hline \vtop{\hbox{\strut $\Phi_2(1^5) = \langle \alpha, \alpha_1, \alpha_2, \gamma, \delta: [\alpha_1, \alpha]=\alpha_2,$} \hbox{\strut $ \alpha^p=\alpha_1^p=\alpha_2^p =\gamma^p=\delta^p=1 \rangle$ } } & $\langle \alpha_2, \gamma, \delta \rangle$ & $\langle \alpha_2 \rangle$ & $\langle \alpha_1, \alpha_2, \gamma, \delta \rangle$ & $\psi_\mu(h) = \begin{cases}
						1  &\quad \text{ if } h=\alpha_1,\\
						\mu(\alpha_2)  &\quad \text{ if } h=\alpha_2,\\
						\mu(\gamma)  &\quad \text{ if } h=\gamma,\\
						\mu(\delta)  &\quad \text{ if } h=\delta\\
						
					\end{cases}$\\
					
					\hline \vtop{\hbox{\strut $\Phi_2(41)=\langle \alpha, \alpha_1, \alpha_2 : [\alpha_1, \alpha] =\alpha^{p^3}=\alpha_2,$}\hbox{\strut $\alpha_1^p=\alpha_2^p=1\rangle$ }} & $\langle \alpha^p \rangle$ & $\langle \alpha^{p^3} \rangle$ & $\langle \alpha^p, \alpha_1 \rangle$ & $\psi_\mu(h) = \begin{cases}
						\mu(\alpha^p)  &\quad \text{ if } h=\alpha^p,\\
						1  &\quad \text{ if } h=\alpha_1\\
					\end{cases}$\\
					
					\hline\vtop{\hbox{\strut $\Phi_2(32)a_1=\langle \alpha, \alpha_1, \alpha_2 : [\alpha_1, \alpha] =\alpha^{p^2}=\alpha_2,$}\hbox{\strut $\alpha_1^{p^2}=\alpha_2^p=1\rangle$}}  & $\langle \alpha^p, \alpha_1^p \rangle$ & $\langle \alpha^{p^2} \rangle$ & $\langle \alpha^p, \alpha_1 \rangle$ & $\psi_\mu(h) = \begin{cases}
						\mu(\alpha^p)  &\quad \text{ if } h=\alpha^p,\\
						(\mu(\alpha_1^p))^{\frac{1}{p}}        &\quad \text{ if } h=\alpha_1
					\end{cases}$\\
					
					\hline \vtop{\hbox{\strut $\Phi_2(311)b=\langle \alpha, \alpha_1, \alpha_2, \gamma : [\alpha_1, \alpha] =\gamma^{p^2}=\alpha_2,$}\hbox{\strut $\alpha^p=\alpha_1^p=\alpha_2^p=1\rangle$}} & $\langle \gamma \rangle$ & $\langle \gamma^{p^2} \rangle$ & $\langle \alpha, \gamma \rangle$ & $\psi_\mu(h) = \begin{cases}
						1  &\quad \text{ if } h=\alpha,\\
						\mu(\gamma)  &\quad \text{ if } h=\gamma\\
					\end{cases}$\\
					
					\hline \vtop{\hbox{\strut $\Phi_2(311)c=\langle \alpha, \alpha_1, \alpha_2 : [\alpha_1, \alpha] =\alpha_2,$}\hbox{\strut $\alpha^{p^3}=\alpha_1^p=\alpha_2^p=1\rangle$}}  & $\langle \alpha^p, \alpha_2 \rangle$ & $\langle \alpha_2 \rangle$ & $\langle \alpha^p, \alpha_1, \alpha_2 \rangle$ & $\psi_\mu(h) = \begin{cases}
						\mu(\alpha^p)  &\quad \text{ if } h=\alpha^p,\\
						1  &\quad \text{ if } h=\alpha_1,\\
						\mu(\alpha_2)  &\quad \text{ if } h=\alpha_2\\
					\end{cases}$\\
					
					\hline \vtop{\hbox{\strut $\Phi_2(221)c=\langle \alpha, \alpha_1, \alpha_2, \gamma : [\alpha_1, \alpha] =\gamma^p=\alpha_2,$}\hbox{\strut $\alpha^{p^2}=\alpha_1^p=\alpha_2^p=1\rangle$ }} & $\langle \alpha^p, \gamma \rangle$ & $\langle \gamma^p \rangle$ & $\langle \alpha^p, \alpha_1, \gamma \rangle$ & $\psi_\mu(h) = \begin{cases}
						\mu(\alpha^p)  &\quad \text{ if } h=\alpha^p,\\
						1  &\quad \text{ if } h=\alpha_1,\\
						\mu(\gamma)  &\quad \text{ if } h=\gamma\\
					\end{cases}$\\
					\hline
				\end{longtable}
			\end{tiny}
		\item For $G=\Phi_2(32){a_2}=\langle \alpha, \alpha_1, \alpha_2 : [\alpha_1, \alpha] =\alpha_1^p=\alpha_2, \alpha^{p^3}=\alpha_2^p=1\rangle$, we have $Z(G)=\langle \alpha^p, \alpha_1^p \rangle$ and $G'=\langle \alpha_1^p \rangle$. Suppose $\mu \in \Irr(Z(G) | G')$ and is given by $\mu(z) = \begin{cases}
			\zeta_p  &\quad \text{ if } z=\alpha_1^p,\\
			\zeta_{p^2}^i  &\quad \text{ if } z=\alpha^p\\
		\end{cases}$, where $0\leq i \leq p^2-1$. Then we have the following two cases.
	\begin{enumerate}
		\item {\bf Case ($\mu(\alpha^p)$ is a primitive $p^2$-th root of unity).} In this case, a required pair to construct an irreducible rational matrix representation of $G$ affording the character $\Omega(\chi_\mu)$ is $(H, \psi_\mu)$ such that $H=\langle \alpha^p, \alpha_1 \rangle$ and $$\psi_\mu(h) = \begin{cases}
			\mu(\alpha^p)  &\quad \text{ if } h=\alpha^p,\\
			(\mu(\alpha_1^p))^{\frac{1}{p}}        &\quad \text{ if } h=\alpha_1.
		\end{cases}$$
		
		\item {\bf Case ($\mu(\alpha^p)$ is not a primitive $p^2$-th root of unity).} In this case, $\mu$ can be defined as $\mu(z) = \begin{cases}
		\zeta_p  &\quad \text{ if } z=\alpha_1^p,\\
		\zeta_p^i  &\quad \text{ if } z=\alpha^p\\
	\end{cases}$, where $0\leq i \leq p-1$. Hence, a required pair to construct an irreducible rational matrix representation of $G$ affording the character $\Omega(\chi_\mu)$ is $(H, \psi_\mu)$ such that $H=\langle \alpha\alpha_1^{-i}, \alpha_1^p \rangle$ and $$\psi_\mu(h) = \begin{cases}
	1  &\quad \text{ if } h=\alpha\alpha_1^{-i},\\
	\mu(\alpha_1^p)        &\quad \text{ if } h=\alpha_1^p.
	\end{cases}$$
	\end{enumerate}
	
\item For $G=\Phi_2(221)d=\langle \alpha, \alpha_1, \alpha_2 : [\alpha_1, \alpha] =\alpha_2, \alpha^{p^2}=\alpha_1^{p^2}=\alpha_2^p=1\rangle$, we have $Z(G)=\langle \alpha^p, \alpha_1^p, \alpha_2 \rangle$ and $G'=\langle \alpha_2 \rangle$. Let $\mu \in \Irr(Z(G) | G')$. Then we have the following two cases.
\begin{enumerate}
	\item {\bf Case ($\mu(\alpha_1^p)=1$).} In this case, a required pair to construct an irreducible rational matrix representation of $G$ affording the character $\Omega(\chi_\mu)$ is $(H, \psi_\mu)$ such that $H=\langle \alpha^p, \alpha_1, \alpha_2 \rangle$ and $$\psi_\mu(h) = \begin{cases}
		\mu(\alpha^p)  &\quad \text{ if } h=\alpha^p,\\
		1 & \quad \text{ if } h= \alpha_1,\\
		\mu(\alpha_2)       &\quad \text{ if } h=\alpha_2.
	\end{cases}$$
	
	\item {\bf Case ($\mu(\alpha_1^p)\neq 1$).} In this case, $\mu(\alpha^p) = (\mu(\alpha_1^p))^i$ for some $0\leq i \leq p-1$ and hence a required pair to construct an irreducible rational matrix representation of $G$ affording the character $\Omega(\chi_\mu)$ is $(H, \psi_\mu)$ such that $H=\langle \alpha\alpha_1^{-i}, \alpha_1^p, \alpha_2 \rangle$ and $$\psi_\mu(h) = \begin{cases}
		1  &\quad \text{ if } h=\alpha\alpha_1^{-i},\\
		\mu(\alpha_1^p)        &\quad \text{ if } h=\alpha_1^p,\\
		\mu(\alpha_2)       &\quad \text{ if } h=\alpha_2.
	\end{cases}$$
\end{enumerate}
	  \end{enumerate} 
	\end{theorem}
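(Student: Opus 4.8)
Our plan is to extract from the VZ-structure a single criterion for a pair $(H,\psi)$ to induce a prescribed non-linear character with the correct field of values, prove that criterion once, and then deduce parts (1)--(3) by checking its hypotheses for the pairs listed in the statement, the only delicate point being the behaviour of the field condition for the two exceptional families.

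\emph{Step 1 (a uniform criterion).} Since $G\in\Phi_2$ is a VZ-group, \eqref{VZ} gives a bijection $\mu\mapsto\chi_\mu$ between $\Irr(Z(G)\mid G')$ and $\nl(G)$, with $\chi_\mu(1)=|G/Z(G)|^{1/2}$, with $\chi_\mu$ vanishing on $G\setminus Z(G)$, and with $\chi_\mu\downarrow_{Z(G)}=\chi_\mu(1)\,\mu$, whence $\mathbb{Q}(\chi_\mu)=\mathbb{Q}(\mu)$. We first prove: \emph{if $G$ is a VZ-group, $Z=Z(G)$, $\mu\in\Irr(Z\mid G')$, and $H\le G$ is abelian with $Z\le H$ and $[G:H]=\chi_\mu(1)$, then any $\psi\in\lin(H)$ with $\psi\downarrow_Z=\mu$ satisfies $\psi^G=\chi_\mu$; if moreover $\mathbb{Q}(\psi)=\mathbb{Q}(\mu)$, then $(H,\psi)$ is a required pair for $\Omega(\chi_\mu)$.} Indeed, because $\chi_\mu$ is supported on $Z\le H$,
$$\langle\psi,\ \chi_\mu\downarrow_H\rangle_H=\frac{1}{|H|}\sum_{z\in Z}\chi_\mu(z)\,\overline{\psi(z)}=\frac{\chi_\mu(1)\,|Z|}{|H|}\,\langle\mu,\ \psi\downarrow_Z\rangle_Z=\frac{\chi_\mu(1)\,|Z|}{|H|}=1,$$
where the last equality uses $|G/Z|=\chi_\mu(1)^2$ together with $[G:H]=\chi_\mu(1)$, so that $|H|=\chi_\mu(1)\,|Z|$. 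By Frobenius reciprocity $\chi_\mu$ occurs in $\psi^G$ with multiplicity $1$, and since $\deg\psi^G=[G:H]=\chi_\mu(1)=\deg\chi_\mu$, we obtain $\psi^G=\chi_\mu$. As $m_{\mathbb{Q}}(\chi_\mu)=1$ by Lemma \ref{lemma:schurindexpgroup} and $\mathbb{Q}(\chi_\mu)=\mathbb{Q}(\mu)$, the pair $(H,\psi)$ is a required pair in the sense of Algorithm \ref{algorithm}.

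\emph{Step 2 (the generic rows).} For each group in Table \ref{t:1} we verify the hypotheses of Step 1 for the indicated $(H,\psi_\mu)$. Abelianness of $H$ follows because $G$ has nilpotency class $2$: every commutator of the displayed generators of $H$ lies in $G'$, and by bilinearity reduces to a power of $[\alpha_1,\alpha]$ that is a multiple of $p$ --- for instance $[\alpha_1,\alpha^p]=[\alpha_1,\alpha]^p$ --- hence is trivial since $G'$ is elementary abelian. In the rows where $H$ is generated by a twisted element like $\alpha^{-i}\alpha_1$, the same class-$2$ collection gives $(\alpha^{-i}\alpha_1)^p=\alpha^{-ip}\alpha_1^{p}$ (the correction term $[\alpha_1,\alpha]^{-i\binom p2}$ vanishes because $p\mid\binom p2$ for odd $p$), which shows simultaneously that $Z(G)\le H$ and that the prescription $\psi_\mu(\alpha^{-i}\alpha_1)=1$ is consistent precisely when $\mu(\alpha_1^p)=\mu(\alpha^p)^i$, pinning down $i$. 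The index $[G:H]=|G/Z(G)|^{1/2}=\chi_\mu(1)$ is an order count from James' presentation, $\psi_\mu\downarrow_{Z(G)}=\mu$ is read off directly, and $\mathbb{Q}(\psi_\mu)=\mathbb{Q}(\mu)$ is immediate from the listed values; in particular for $\Phi_2(32)a_1$ the relation $G'=\langle\alpha^{p^2}\rangle$ forces $\mu(\alpha^p)$ to be a primitive $p^2$-th root of unity, so $\mathbb{Q}(\mu)=\mathbb{Q}(\zeta_{p^2})$ already contains every $p$-th root of $\mu(\alpha_1^p)$ and the choice $\psi_\mu(\alpha_1)=(\mu(\alpha_1^p))^{1/p}$ is immaterial.

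\emph{Step 3 (exceptional families and completeness).} The groups $\Phi_2(32)a_2$ and $\Phi_2(221)d$ are exactly the cases where the naive abelian complement can violate $\mathbb{Q}(\psi_\mu)=\mathbb{Q}(\mu)$, and handling this is the crux. For $\Phi_2(32)a_2$, $G'=\langle\alpha_1^p\rangle$ makes $\mu(\alpha_1^p)$ a primitive $p$-th root of unity while $\mu(\alpha^p)$ is unconstrained: when $\mu(\alpha^p)$ is a primitive $p^2$-th root of unity, $\mathbb{Q}(\mu)=\mathbb{Q}(\zeta_{p^2})$ and Step 2 applies verbatim with $H=\langle\alpha^p,\alpha_1\rangle$; otherwise $\mathbb{Q}(\mu)=\mathbb{Q}(\zeta_p)$, a $p$-th root on $\alpha_1$ would enlarge the field, and we instead write $\mu(\alpha^p)=(\mu(\alpha_1^p))^i$ and take $H=\langle\alpha\alpha_1^{-i},\alpha_1^p\rangle$ with $\psi_\mu(\alpha\alpha_1^{-i})=1$, $\psi_\mu(\alpha_1^p)=\mu(\alpha_1^p)$; here $(\alpha\alpha_1^{-i})^p=\alpha^p(\alpha_1^p)^{-i}$ shows $H$ is abelian and contains $Z(G)=\langle\alpha^p,\alpha_1^p\rangle$, the index is $\chi_\mu(1)$, and $\mathbb{Q}(\psi_\mu)=\mathbb{Q}(\zeta_p)=\mathbb{Q}(\mu)$. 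The analysis of $\Phi_2(221)d$ is the same phenomenon: $\mu(\alpha_1^p)=1$ lets the untwisted $H=\langle\alpha^p,\alpha_1,\alpha_2\rangle$ work, whereas $\mu(\alpha_1^p)\ne1$ forces the twist $H=\langle\alpha\alpha_1^{-i},\alpha_1^p,\alpha_2\rangle$ with $\mu(\alpha^p)=(\mu(\alpha_1^p))^i$. For completeness, an irreducible rational representation of $G$ has kernel not containing $G'$ iff its character is $\Omega(\chi)$ for some non-linear $\chi$, hence (as $G$ is VZ) iff it equals $\Omega(\chi_\mu)$ for some $\mu\in\Irr(Z(G)\mid G')$, and two such coincide iff the corresponding characters $\mu$ share the same kernel in $Z(G)$; since Table \ref{t:1} together with parts (2)--(3) produces a required pair for every $\mu\in\Irr(Z(G)\mid G')$, these account for all of them. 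The main obstacle throughout is thus the field-of-values bookkeeping for $\Phi_2(32)a_2$ and $\Phi_2(221)d$: pinpointing when the obvious complement fails, and verifying that the twisted subgroup remains abelian, still contains $Z(G)$, has the correct index, and admits an extension of $\mu$ generating exactly $\mathbb{Q}(\mu)$.
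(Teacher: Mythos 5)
Your proposal is correct and follows essentially the same route as the paper: reduce everything to the conditions $Z(G)\le H$, $H$ abelian of index $\chi_\mu(1)$, $\psi_\mu\downarrow_{Z(G)}=\mu$, and $\mathbb{Q}(\psi_\mu)=\mathbb{Q}(\mu)$, then verify these case by case (including the same collection identity $(\alpha^{-i}\alpha_1)^p=\alpha^{-ip}\alpha_1^p$ for the twisted subgroups and the same field-of-values dichotomy for $\Phi_2(32)a_2$ and $\Phi_2(221)d$). The only difference is that you prove the sufficiency of these conditions inline via the Frobenius-reciprocity computation in Step 1, whereas the paper cites the corresponding results for VZ $p$-groups from \cite{Ram}.
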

\begin{proof} 
Observe that, for $G \in \Phi_2$,  
	we have $|G'|=p$, $G' \subset Z(G)$, $|Z(G)|=p^3$ and $\cd(G)=\{1, p\}$ (see \cite[Subection 4.5]{RJ}). Further, from \eqref{VZ}, each of the non-linear irreducible complex characters of $G$ is of the form $\chi_\mu$ and is given by
	\begin{equation}\label{VZp^5Phi_2}
		\chi_\mu(g) = \begin{cases}
			p\mu(g)  &\quad \text{ if } g \in Z(G),\\
			0            &\quad \text{ otherwise }
		\end{cases}
	\end{equation}
	where $\mu \in \Irr(Z(G) | G')$.	
Let $\chi_\mu \in \nl(G)$ (as defined in \eqref{VZp^5Phi_2}). Suppose $(H, \psi_\mu)$ is a required pair to construct an irreducible rational matrix representation of $G$ that affords the character $\Omega(\chi_\mu)$. By \cite[Proposition 21]{Ram}, we have $Z(G) \subset H$ and $\psi_\mu\downarrow_{Z(G)} = \mu$. Furthermore, \cite[Corollary 27]{Ram} ensures that $H$ is abelian. Since $(H, \psi_\mu)$ is a required pair, we have $\mathbb{Q}(\psi_\mu) = \mathbb{Q}(\chi_\mu) = \mathbb{Q}(\mu)$. Thus, by \cite[Lemma 24]{Ram}, we must choose $\psi_\mu \in \lin(H)$ such that $\ker(\psi_\mu) = |G/Z(G)|^{\frac{1}{2}}|\ker(\mu)| = p|\ker(\mu)|$.
	\begin{enumerate}
		\item Consider the group $G = \Phi_2(221)a$. For each $0 \leq i \leq p-1$, suppose $H=\langle  \alpha^{-i}\alpha_1, \alpha^p, \gamma \rangle$ and define $\mu \in \Irr(Z(G) | G')$ as follows:
		$$\mu(z) = \begin{cases}
			\zeta_p & \text{if } z = \alpha^p, \\
			\zeta_p^i & \text{if } z = \alpha_1^p, \\
			\zeta_p^j & \text{if } z = \gamma
		\end{cases}$$
		for some non-negative integer $j$ with $0 \leq j \leq p-1$, and $z \in Z(G)$. Furthermore, we have $(\alpha^{-i}\alpha_1)^p = \alpha^{-ip}\alpha_1^p$. Thus, $\psi_\mu \in \lin(H)$ (as given in Table \ref{t:1}) satisfies
		$$\psi_\mu(\alpha_1^p) = \left( \psi_\mu(\alpha^{-i}\alpha_1) \right)^p \left(\psi_\mu(\alpha^p)\right)^i = \left(\psi_\mu(\alpha^{p})\right)^{i} = \mu(\alpha_1^p).$$
		Observe that $(H, \psi_\mu)$ meets all the conditions to be a required pair.\\
		Similarly, it is a routine verification that all the pairs $(H, \psi_\mu)$ corresponding to the remaining groups in the isoclinic family $\Phi_2$ (as listed in Table \ref{t:1}) also satisfy the criteria for being required pairs. This completes the proof of Theorem \ref{thm:reqpairPhi2}(1).
		
	\end{enumerate}
Theorem \ref{thm:reqpairPhi2}(2) and Theorem \ref{thm:reqpairPhi2}(3) also follow by a routine verification.  This completes the proof of Theorem \ref{thm:reqpairPhi2}.
\end{proof}

Next, we prove Theorem \ref{thm:reqpairPhi5}, which gives a comprehensive description of the rational matrix representations of the groups of order $p^5$ in $\Phi_5$.
	\begin{theorem}\label{thm:reqpairPhi5}
		Let $G$ be a group of order $p^5$ such that $G \in \Phi_5$. Then Table $\ref{t:2}$ determines all inequivalent irreducible rational matrix representations of $G$ whose kernels do not contain $G'$.
		\begin{tiny}
			\begin{longtable}[c]{|c|c|c|c|c|}
				\caption{A required pair $(H, \psi_\mu)$ to construct an irreducible rational matrix representation of $G \in \Phi_{5}$ that affords the character $\Omega(\chi_\mu)$, where $\chi_\mu \in \nl(G)$ (as defined in \eqref{VZp^5Phi_5}). \label{t:2}}\\
				\hline
				Group $G$ & $Z(G)$ & $G'$ & $H$ & $\psi_\mu$ \\
				\hline
				\endfirsthead
				\hline
				\multicolumn{5}{|c|}{Continuation of Table $\ref{t:2}$}\\
				\hline
				Group $G$ & $Z(G)$ & $G'$ & $H$ & $\psi$ \\
				\hline
				\endhead
				\hline
				\endfoot
				\hline
				\endlastfoot
				\hline \vtop{\hbox{\strut $\Phi_5(2111) =\langle \alpha_1, \alpha_2, \alpha_3, \alpha_4, \beta : [\alpha_1, \alpha_2]= [\alpha_3, \alpha_4]=\alpha_1^p=\beta,$} \hbox{\strut $\alpha_2^p=\alpha_3^p=\alpha_4^p=\beta^p=1 \rangle$ } } & $\langle \alpha_1^p \rangle$ & $\langle \alpha_1^p \rangle$ & $\langle \alpha_1^p, \alpha_2, \alpha_3 \rangle$ & $\psi_\mu(h) = \begin{cases}
					\mu(\alpha_1^p)  &\quad \text{ if } h=\alpha_1^p,\\
					1  &\quad \text{ if } h=\alpha_2,\\
					1  &\quad \text{ if } h=\alpha_3\\
				\end{cases}$\\
				
				\hline \vtop{\hbox{\strut $\Phi_5(1^5) =\langle \alpha_1, \alpha_2, \alpha_3, \alpha_4, \beta : [\alpha_1, \alpha_2]= [\alpha_3, \alpha_4]=\beta,$} \hbox{\strut $\alpha_1^p=\alpha_2^p=\alpha_3^p=\alpha_4^p=\beta^p=1 \rangle$ } } & $\langle \beta \rangle$ & $\langle \beta \rangle$ & $\langle \alpha_1, \alpha_3, \beta \rangle$ & $\psi_\mu(h) = \begin{cases}
					1  &\quad \text{ if } h=\alpha_1,\\
					1  &\quad \text{ if } h=\alpha_3,\\
					\mu(\beta)  &\quad \text{ if } h=\beta\\
				\end{cases}$\\
					\hline
			\end{longtable}
		\end{tiny}
	\end{theorem}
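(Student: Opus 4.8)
The plan is to follow the same strategy used in the proof of Theorem \ref{thm:reqpairPhi2}, since every group $G \in \Phi_5$ is again a VZ $p$-group (indeed $\Phi_2 \cup \Phi_5$ consists of VZ $p$-groups by \cite[Lemma 5.1]{SKP}). First I would record the structural data for $G \in \Phi_5$: from \cite[Subsection 4.5]{RJ} we have $|G'| = p$, $G' = Z(G)$ (note this is the key difference from $\Phi_2$, where $G'$ is a proper subgroup of $Z(G)$ of index $p^2$), $|Z(G)| = p$, and $\cd(G) = \{1, p^2\}$. Consequently $|G/Z(G)|^{1/2} = p^2$, and by \eqref{VZ} every non-linear irreducible complex character of $G$ has the form
\begin{equation}\label{VZp^5Phi_5}
	\chi_\mu(g) = \begin{cases}
		p^2 \mu(g) & \text{if } g \in Z(G),\\
		0 & \text{otherwise},
	\end{cases}
\end{equation}
where $\mu \in \Irr(Z(G) \mid G') = \Irr(Z(G)) \setminus \{1_{Z(G)}\}$ since $Z(G) = G'$ is cyclic of order $p$. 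In particular there are exactly $p-1$ such characters $\chi_\mu$, all Galois conjugate over $\mathbb{Q}$ (their common kernel is trivial and $\mathbb{Q}(\mu) = \mathbb{Q}(\zeta_p)$), so they contribute a single irreducible rational matrix representation $\Omega(\chi_\mu)$ — which is exactly what Table \ref{t:2} must produce for each of the two groups.

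Next I would verify, for each of $\Phi_5(2111)$ and $\Phi_5(1^5)$, that the listed pair $(H, \psi_\mu)$ is a required pair. By \cite[Proposition 21]{Ram} we need $Z(G) \subseteq H$ with $\psi_\mu \downarrow_{Z(G)} = \mu$; by \cite[Corollary 27]{Ram} we need $H$ abelian; and since $\mathbb{Q}(\psi_\mu) = \mathbb{Q}(\chi_\mu) = \mathbb{Q}(\mu)$ is forced, \cite[Lemma 24]{Ram} says we must choose $\psi_\mu \in \lin(H)$ with $|\ker(\psi_\mu)| = p^2 |\ker(\mu)| = p^2$ (as $\ker(\mu) = 1$). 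So it suffices to check: (i) $H$ as listed is abelian of order $p^3$ (so that $[G:H] = p^2$, matching $\chi_\mu(1) = p^2$, and $|\ker(\psi_\mu)| = |H|/p = p^2$); (ii) $\psi_\mu$ is a well-defined linear character of $H$ restricting to $\mu$ on $Z(G)$; and (iii) $\psi_\mu^G = \chi_\mu$. For $\Phi_5(1^5)$, $H = \langle \alpha_1, \alpha_3, \beta \rangle$: since $[\alpha_1, \alpha_3] = 1$ in this presentation (the only nontrivial commutators are $[\alpha_1,\alpha_2]$ and $[\alpha_3,\alpha_4]$), $H$ is abelian, elementary abelian of order $p^3$, and $\psi_\mu$ sending $\alpha_1, \alpha_3 \mapsto 1$, $\beta \mapsto \mu(\beta)$ is a homomorphism with kernel $\langle \alpha_1, \alpha_3 \rangle$ of order $p^2$. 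For $\Phi_5(2111)$, $H = \langle \alpha_1^p, \alpha_2, \alpha_3 \rangle$: here $\beta = \alpha_1^p$ is central, $[\alpha_2, \alpha_3] = 1$, and $\alpha_1^p$ has order $p$ with $\alpha_2^p = \alpha_3^p = 1$, so again $H$ is abelian of order $p^3$ and $\psi_\mu$ has the required kernel. The induced-character computation (iii) is the routine part: one checks $\psi_\mu^G$ vanishes off $Z(G)$ and agrees with $p^2\mu$ on $Z(G)$, e.g.\ by comparing degrees and restricting to $Z(G)$ using that $Z(G) \subseteq H$ with $\psi_\mu \downarrow_{Z(G)} = \mu$, or directly via \eqref{VZ} together with the uniqueness of the VZ-character lying over $\mu$.

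Finally I would observe that Table \ref{t:2} lists only the representations whose kernel does not contain $G'$; the remaining irreducible rational representations of $G$ are precisely those that factor through $G/G'$, an abelian group, and these are enumerated separately (by Lemma \ref{lemma:Ayoub} applied to $G/G'$) in the general accounting — so no further work is needed for the statement as given. The main obstacle, such as it is, is purely bookkeeping: correctly reading off from James' presentations in \cite{RJ} that the chosen generators of $H$ actually commute and have the stated orders, and confirming $[G:H] = p^2$ so that the degree of $\psi_\mu^G$ matches $\chi_\mu(1) = p^2$; once $H$ is seen to be abelian of order $p^3$ containing $Z(G)$ with $\psi_\mu \downarrow_{Z(G)} = \mu$, the fact that $(H, \psi_\mu)$ is a required pair follows from the cited results in \cite{Ram} exactly as in the $\Phi_2$ case.
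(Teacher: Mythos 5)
Your proposal is correct and follows essentially the same route as the paper: it records the VZ structure of $G \in \Phi_5$ with $|G'|=|Z(G)|=p$ and $\cd(G)=\{1,p^2\}$, invokes the same criteria from \cite{Ram} (center contained in $H$, $\psi_\mu\downarrow_{Z(G)}=\mu$, $H$ abelian, $|\ker(\psi_\mu)|=p^2|\ker(\mu)|$), and reduces the rest to the routine verification of the two table entries. The only difference is that you carry out that routine verification explicitly, which the paper leaves to the reader.
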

\begin{proof}
Observe that, for $G \in \Phi_5$, we have $|G'| = |Z(G)| = p$ and $\cd(G) = \{1, p^2\}$ (see \cite[Subsection 4.5]{RJ}). Additionally, from \eqref{VZ}, each non-linear irreducible complex character $\chi_\mu$ of $G$ is defined by
\begin{equation}\label{VZp^5Phi_5}
	\chi_\mu(g) = \begin{cases}
		p^2\mu(g)  &\quad \text{if } g \in Z(G),\\
		0          &\quad \text{otherwise,}
	\end{cases}
\end{equation}
where $\mu \in \Irr(Z(G) | G')$.
Let $\chi_\mu \in \nl(G)$ as defined in \eqref{VZp^5Phi_5}. Suppose that $(H, \psi_\mu)$ is a required pair to construct an irreducible rational matrix representation of $G$ that affords the character $\Omega(\chi_\mu)$. Following a similar discussion to that in Theorem \ref{thm:reqpairPhi2}, we conclude that $Z(G) \subset H$, $\psi_\mu\downarrow_{Z(G)} = \mu$, and $H$ is abelian. Moreover, from \cite[Lemma 22]{Ram}, it is essential to select $\psi_\mu \in \lin(H)$ such that $|\ker(\psi_\mu)| = p^2 |\ker(\mu)|$. It is routine to verify that all the pairs $(H, \psi_\mu)$ listed in Table \ref{t:2} satisfy the criteria for being required pairs. This completes the proof of Theorem \ref{thm:reqpairPhi5}.
\end{proof}

\subsection{Groups belonging to $\Phi_4$ and $\Phi_6$} \label{subsec:Phi4}
In this subsection, we address the groups in the isoclinic families $\Phi_4$ and $\Phi_6$ together. We begin with Lemma \ref{lemma:characterPhi4}, which provides a method for determining all non-linear irreducible complex characters of groups of order $p^5$ that belong to $\Phi_4$.
\begin{lemma}\label{lemma:characterPhi4}
	Let $G$ be a group of order $p^5$ such that $G \in \Phi_4$. Then we have the following.
	\begin{enumerate}
		\item $\cd(G)=\{1, p\}$.
		\item There exists a bijection between the sets $\{\bar{\chi} \in \nl(G/K) : C_p \cong K < Z(G)\}$ and $\nl(G)$,where $\bar{\chi}$ lifts to $\chi \in \nl(G)$. 
	\end{enumerate}
\end{lemma}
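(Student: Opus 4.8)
The plan is to exploit the structural data for the family $\Phi_4$ recorded in James' classification \cite{RJ}: every $G\in\Phi_4$ satisfies $|G'|=p$, $|Z(G)|=p^2$, and $G'\subsetneq Z(G)$, so that $Z(G)\cong C_p\times C_p$ (or $C_{p^2}$, but in $\Phi_4$ it is elementary abelian), and $G/Z(G)$ has order $p^3$. For part (1), I would first note that since $G'$ has order $p$ and $G'\le Z(G)$, the group $G$ is of nilpotency class $2$; hence for every $\chi\in\Irr(G)$ we have $\chi(1)^2 \mid [G:Z(G)] = p^3$, forcing $\chi(1)\in\{1,p\}$. To rule out $\chi(1)=p^{3/2}$ (impossible anyway as a non-integer) is automatic, but I still must show $p$ genuinely occurs and that $p^{?}$ larger does not: $\chi(1)^2\le [G:Z(G)]=p^3$ gives $\chi(1)\le p$, and the existence of a non-linear character follows since $G$ is non-abelian. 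So $\cd(G)=\{1,p\}$. Alternatively, and perhaps cleaner, one can cite the standard fact that a class-$2$ group with $|G'|=p$ has all non-linear irreducible characters of degree $\sqrt{[G:Z(G)]}$ only when $(G,Z(G))$ is a generalized Camina pair; here instead one argues directly from $\chi(1)^2\mid[G:Z(G)]$ together with $\chi(1)\mid|G|$.

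For part (2), the key observation is that $Z(G)$ is elementary abelian of rank $2$ and $G'$ is one of its order-$p$ subgroups, so there are exactly $p$ subgroups $K$ of $Z(G)$ with $K\cong C_p$ and $K\ne G'$, plus $G'$ itself. For $\chi\in\nl(G)$, I would show $G'\le\ker(\chi)$ is \emph{impossible}: if $G'\subseteq\ker(\chi)$ then $\chi$ is a character of the abelian group $G/G'$, hence linear, a contradiction. Thus every $\chi\in\nl(G)$ is faithful on $G'$. Next I claim each $\chi\in\nl(G)$ has a kernel $K:=\ker(\chi)\cap Z(G)$ that is a subgroup of $Z(G)$ of order $p$ with $K\ne G'$ — indeed $Z(\chi)/\ker(\chi)$ is cyclic and $\ker(\chi)\cap G'=1$, and a dimension count using $\chi(1)=p$, $|G/Z(G)|=p^3$ shows $|\ker(\chi)|=p$; moreover $\ker(\chi)\le Z(G)$ because $\ker(\chi)$ is normal of order $p$ and meets $Z(G)$ nontrivially (every nontrivial normal subgroup of a $p$-group meets the center), so $\ker(\chi)=K\le Z(G)$. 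This sets up the map $\chi\mapsto\bar\chi\in\nl(G/K)$, where $\bar\chi$ is $\chi$ viewed as a character of $G/\ker(\chi)=G/K$, and $K<Z(G)$, $K\cong C_p$. Conversely, given $C_p\cong K<Z(G)$ and $\bar\chi\in\nl(G/K)$, inflation gives $\chi\in\nl(G)$ with $K\le\ker(\chi)$; since $\chi$ non-linear forces $G'\not\subseteq\ker(\chi)$, we must have $K\ne G'$, and $\ker(\chi)=K$ by the order count applied to $G/K$ (whose relevant invariants match). These two assignments are mutually inverse, giving the bijection.

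The main obstacle I anticipate is the bookkeeping in showing $|\ker(\chi)|=p$ and $\ker(\chi)\le Z(G)$ uniformly across all groups in $\Phi_4$ — the families $\Phi_4$ and $\Phi_6$ differ in the structure of $G/Z(G)$ and in how $G'$ sits inside $Z(G)$, so I would want to phrase the argument purely in terms of the invariants $|G'|=p$, $|Z(G)|=p^2$, $\cd(G)=\{1,p\}$, $G'\not\le Z(G)$-complement issues, rather than case-by-case on presentations. The cleanest route is: for $\chi\in\nl(G)$, $\chi(1)=p$ implies $\dim$-count $|G|=\sum|\ker|$-type relations are not needed; instead use that $\chi_{Z(G)}=\chi(1)\lambda$ for a \emph{single} linear $\lambda$ (since $Z(G)$ acts by scalars), so $\ker(\chi)\cap Z(G)=\ker\lambda$ has order $|Z(G)|/|{\rm im}\,\lambda|=p^2/p=p$ because $\lambda$ is nontrivial (it is nontrivial since $G'\not\subseteq\ker\chi$ and $G'\le Z(G)$, hence $\lambda$ is faithful on $G'$). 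Then $\ker(\chi)\le Z(G)$ follows because $\ker(\chi)$ has order dividing $p$ as well: $|G:\ker\chi|\ge|G:Z(G)|\cdot|Z(G):\ker\lambda|$ and a final squeeze with $\chi(1)^2=|G:Z(\chi)|$ pins down $\ker(\chi)=\ker\lambda\le Z(G)$. Once this is established the bijection is immediate, and the same reasoning applies verbatim to $\Phi_6$ (with the appropriate invariants), which is presumably why the authors bundle the two families together.
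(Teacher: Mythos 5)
Your argument for part (1) is fine: the divisibility $\chi(1)^2 \mid [G:Z(G)] = p^3$ together with non-abelianness gives $\cd(G)=\{1,p\}$, which is exactly the paper's one-line proof. But part (2) rests on incorrect structural data for the family $\Phi_4$, and this is a genuine gap. You assume $|G'|=p$ and $G'\subsetneq Z(G)$; in fact for every $G\in\Phi_4$ one has $G'=Z(G)\cong C_p\times C_p$ (see \cite[Subsection 4.5]{RJ}, and e.g.\ the presentation of $\Phi_4(1^5)$ in Example \ref{examp:reqpair}, where $Z(G)=G'=\langle\beta_1,\beta_2\rangle$). The invariants you use describe a different situation, and the error propagates: your claim that every $\chi\in\nl(G)$ is faithful on $G'$ is false. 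Since $Z(G)=G'$ has rank $2$ and $G'\le Z(\chi)$ with $Z(\chi)/\ker(\chi)$ cyclic, the image of $G'$ in $Z(\chi)/\ker(\chi)$ is cyclic, so $\ker(\chi)\cap G'$ always has order at least $p$; equivalently, since $Z(G)$ is non-cyclic no irreducible character of $G$ is faithful (this is the content of Lemma \ref{lemma:centernon-cyclic}). Your subgroup count is likewise off: the bijection in the statement runs over \emph{all} $p+1$ subgroups $K\cong C_p$ of $Z(G)$, not over ``the $p$ subgroups distinct from $G'$'' --- there is no distinguished order-$p$ subgroup $G'$ inside $Z(G)$ because $G'$ \emph{is} $Z(G)$.

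With the correct invariants the proof is a short counting argument, which is what the paper does: $|\lin(G)|=|G/G'|=p^3$ gives $|\nl(G)|=(p^5-p^3)/p^2=p^3-p$; for each of the $p+1$ subgroups $K\cong C_p$ of $Z(G)$ the quotient $G/K$ is a VZ group of order $p^4$ with $|(G/K)'|=p$, hence $|\nl(G/K)|=|Z(G/K)|-|Z(G/K)/(G/K)'|=p^2-p$; and $(p+1)(p^2-p)=p^3-p$. Combined with the fact that each $\chi\in\nl(G)$ descends to $G/K$ for the unique $K=\ker(\chi)\cap Z(G)$ of order $p$, this yields the bijection. Your closing remark that the same reasoning transfers verbatim to $\Phi_6$ also needs care: there $Z(G)\cong C_p\times C_p$ is a \emph{proper} subgroup of $G'\cong C_p\times C_p\times C_p$, which again does not match the invariants you posited, and the paper's Lemma \ref{lemma:characterPhi6} accordingly restricts to $\nl(G/K\mid Z(G)/K)$ rather than all of $\nl(G/K)$.
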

\begin{proof}Suppose $G$ is a group of order $p^5$ such that $G \in \Phi_4$.
	\begin{enumerate}
		\item It follows from the fact that $|Z(G)|=p^2$ and that $\chi(1)^2$ divides $|G/Z(G)|$ for all $\chi \in \nl(G)$ (see \cite[Theorem 20]{Berkovich}).
	
		\item We have $Z(G)= G' \cong C_p \times C_p$ and $\nl(G)=p^3-p$. Assume that $C_p \cong K < Z(G)$. Observe that $G/K$ is a VZ $p$-group of order $p^4$. Thus, $|\nl(G/K)|=p^2-p$. Further, $|\{K < G : C_p \cong K < Z(G)\}|=p+1$. Hence, the result follows.  \qedhere
	\end{enumerate}
\end{proof}

Lemma \ref{lemma:characterPhi6} provides a method for determining all non-linear irreducible complex characters of groups of order $p^5$ that belong to $\Phi_6$.
\begin{lemma}\label{lemma:characterPhi6}
	Let $G$ be a group of order $p^5$ such that $G \in \Phi_6$. Then we have the following.
	\begin{enumerate}
		\item $\cd(G)=\{1, p\}$.
		\item $\nl(G/Z(G))|=p-1$, and there is a bijection between the sets $\{\bar{\chi} \in \nl(G/K | Z(G)/K) : C_p \cong K < Z(G)\}$ and $\nl(G|Z(G))$, where $\bar{\chi}$ lifts to $\chi \in \nl(G)$.
	\end{enumerate}
\end{lemma}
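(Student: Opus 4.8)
The plan is to mirror the strategy used for $\Phi_4$ in Lemma \ref{lemma:characterPhi4}, but now taking into account that for $G \in \Phi_6$ the center $Z(G)$ is a proper subgroup of $G'$, so we must pass to quotients by subgroups of $Z(G)$ (not of $G'$) and track only the non-linear characters that are non-trivial on $Z(G)$. First I would record the structural facts about $\Phi_6$ from \cite[Subsection 4.5]{RJ}: for $G \in \Phi_6$ we have $|Z(G)| = p^2$, $|G'| = p^3$, $Z(G) \le G'$, and $G/Z(G)$ has order $p^3$. Part (1) then follows exactly as in Lemma \ref{lemma:characterPhi4}(1): since $\chi(1)^2 \mid |G/Z(G)| = p^3$ for every $\chi \in \nl(G)$ (by \cite[Theorem 20]{Berkovich}), and $\chi(1) > 1$, we get $\chi(1) = p$, so $\cd(G) = \{1, p\}$.

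For part (2), I would first verify that $G/Z(G)$ is a non-abelian group of order $p^3$ (it cannot be abelian, as $G' \not\le Z(G)$), hence has exactly $p-1$ non-linear irreducible characters, each of degree $p$; this gives $|\nl(G/Z(G))| = p-1$. The characters in $\nl(G/Z(G))$, inflated to $G$, are precisely the characters in $\nl(G)$ whose kernel contains $Z(G)$. The remaining non-linear characters are those in $\nl(G \mid Z(G))$, and the claim is that these are obtained by lifting characters from the quotients $G/K$ where $K$ runs over the $p+1$ subgroups of order $p$ in $Z(G) \cong C_p \times C_p$. The key point is that for such a $K$, the quotient $G/K$ has order $p^4$ and center $Z(G)/K \cong C_p$, and I would want to argue $G/K$ is a VZ-group (equivalently, $(G/K, Z(G)/K)$ is a generalized Camina pair) — this should follow because $G/Z(G)$ being of order $p^3$ forces, after quotienting by $K$, that every non-linear irreducible character of $G/K$ vanishes outside $Z(G)/K$; one can see this from the degree constraint and the fact that a non-linear character of $G/K$ has degree $p = |(G/K)/(Z(G)/K)|^{1/2}$, so Lemma \ref{lemma:Caminacharacter}-type vanishing applies. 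Each such $G/K$ then contributes its non-linear characters that are non-trivial on $Z(G)/K$, i.e. the set $\{\bar\chi \in \nl(G/K \mid Z(G)/K)\}$.

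The heart of the matter — and the step I expect to be the main obstacle — is the bijectivity claim: that the assignment $\bar\chi \mapsto \chi$ (lift from $G/K$ to $G$) sets up a bijection from $\bigcup_K \{\bar\chi \in \nl(G/K \mid Z(G)/K)\}$ onto $\nl(G \mid Z(G))$. Surjectivity is the easy direction: any $\chi \in \nl(G \mid Z(G))$ has $\chi(1) = p$, and since $Z(G) \cong C_p \times C_p$ while the central character of $\chi$ restricted to $Z(G)$ has cyclic image (being a faithful-on-the-image linear character of an abelian group it need not be cyclic in general, but $\chi \downarrow_{Z(G)}$ is $\chi(1)$ times a linear character $\lambda$, and $\ker\lambda$ is a subgroup of $Z(G)$ of order $\ge p$), there is some $K$ of order $p$ with $K \le \ker\chi$, so $\chi$ is inflated from $G/K$, and it lies in $\nl(G/K \mid Z(G)/K)$ because $Z(G) \not\le \ker\chi$. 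For injectivity, I would use a counting argument rather than a direct uniqueness proof: show $|\nl(G \mid Z(G))| = |\nl(G)| - (p-1)$ and independently count $\sum_K |\nl(G/K \mid Z(G)/K)|$, using that each VZ-group $G/K$ of order $p^4$ with $|Z(G/K)| = p$ has $|\nl(G/K)| = p^2 - p$ non-linear characters all of which are non-trivial on $Z(G/K)$ (since $Z(G/K)$ has prime order, a non-linear character trivial on it would descend to the abelian group $G/Z(G)$, impossible), giving $|\nl(G/K \mid Z(G)/K)| = p^2 - p$ for each of the $p+1$ choices of $K$, but then correcting for overcounts: a character $\chi$ inflated from $G/K$ for two distinct $K_1, K_2$ would be trivial on $\langle K_1, K_2\rangle = Z(G)$, contradicting $\chi \in \nl(G \mid Z(G))$ — so there is no overcounting, and the map is injective. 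Reconciling the resulting count $(p+1)(p^2-p)$ with $|\nl(G)| - (p-1)$ will pin down $|\nl(G)|$ and complete the argument; getting these bookkeeping identities to close up consistently is where the care is needed.
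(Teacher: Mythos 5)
Your part (1) matches the paper's argument. For part (2), your direct bijection argument is sound and in fact cleaner than what the paper does: surjectivity via $K=\ker\chi\cap Z(G)$ (which has order exactly $p$ since $\chi\downarrow_{Z(G)}=\chi(1)\lambda$ with $1\neq\lambda\in\Irr(Z(G))$ and $Z(G)\cong C_p\times C_p$), and injectivity via the observation that a character inflated from two distinct $K_1,K_2$ would be trivial on $K_1K_2=Z(G)$. The paper instead settles the bijection purely by counting: $|\nl(G)|=p^3-1$, each of the $p+1$ quotients $G/K$ is of order $p^4$ and nilpotency class $3$ with $|\nl(G/K\mid Z(G)/K)|=p^2-p$, and $(p+1)(p^2-p)+(p-1)=p^3-1$.

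However, your structural analysis of $G/K$ is wrong, and this is a genuine error even though it does not sink the bijection. The quotient $G/K$ is \emph{not} a VZ-group: $(G/K)'=G'/K\cong C_p\times C_p$ strictly contains $Z(G/K)=Z(G)/K\cong C_p$, so $G/K$ has nilpotency class $3$ (maximal class for order $p^4$); equivalently, $[G/K:Z(G)/K]=p^3$ is not a perfect square, so the VZ degree identity $p=|(G/K)/(Z(G)/K)|^{1/2}$ you invoke cannot hold. Relatedly, your claim that every non-linear character of $G/K$ is non-trivial on $Z(G/K)$ ``because it would descend to the abelian group $G/Z(G)$'' contradicts your own earlier (correct) observation that $(G/K)/(Z(G)/K)\cong G/Z(G)$ is non-abelian with $p-1$ non-linear characters. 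The correct counts are $|\nl(G/K)|=p^2-1$ (from $p^4=p^2+p^2\,|\nl(G/K)|$) and hence $|\nl(G/K\mid Z(G)/K)|=(p^2-1)-(p-1)=p^2-p$; your figure $p^2-p$ is right only by coincidence of these two errors cancelling. Finally, you should not ``pin down'' $|\nl(G)|$ from the bijection you are proving: compute $|\nl(G)|=(p^5-p^2)/p^2=p^3-1$ independently, after which the bookkeeping $(p+1)(p^2-p)=p^3-p=|\nl(G)|-(p-1)$ closes up as a consistency check on your (already complete) direct argument.
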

\begin{proof} For $G\in\Phi_6$, we have $|Z(G)|=p^2$.
	\begin{enumerate}
		\item It follows from an argument similar to that in Lemma \ref{lemma:characterPhi4} (1).
		\item We have $C_p \times C_p \cong Z(G)< G' \cong C_p \times C_p \times C_p$ and $\nl(G)=p^3-1$. Assume that $C_p \cong K < Z(G)$. Observe that $G/K$ is a $p$-group of order $p^4$ of nilpotency class $3$. Thus, $|\nl(G/K | Z(G)/K)|= p^2-p$. Further, $|\{C_p \cong K < Z(G)\}|=p+1$ and $\nl(G/Z(G))|=p-1$. Hence, the result follows.  \qedhere
	\end{enumerate}
\end{proof}

Proposition \ref{prop:requiredpairquotientgroup} establishes a connection between the required pairs of a group and those of its quotient groups.
\begin{proposition}\label{prop:requiredpairquotientgroup}
	Let $G$ be a finite $p$-group and $N \trianglelefteq G$. Suppose $\bar{\chi} \in \Irr(G/N)$ and a required pair to construct an irreducible rational matrix representation of $G/N$ affording the character $\Omega(\bar{\chi})$ is $(H/N, \bar{\psi})$. Assume that $\chi$ is the lift of $\bar{\chi}$ to $G$ and $\psi$ is the lift of $\bar{\psi}$ to $H$. Then $(H, \psi)$ is a required pair to construct an irreducible rational matrix representation of $G$ that affords the character $\Omega(\chi)$.
\end{proposition}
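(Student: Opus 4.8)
The plan is to unwind the definition of ``required pair'' from Algorithm \ref{algorithm} and verify the three conditions on $(H,\psi)$ directly, transporting each one from the quotient $G/N$ along the canonical projection $\pi\colon G\to G/N$. Recall that $(H/N,\bar\psi)$ being a required pair for $\Omega(\bar\chi)$ means: (i) $H/N\leq G/N$ and $\bar\psi\in\lin(H/N)$; (ii) $\bar\psi^{\,G/N}=\bar\chi$; and (iii) $\mathbb{Q}(\bar\psi)=\mathbb{Q}(\bar\chi)$. I must show the lifted pair $(H,\psi)$ satisfies the analogous three conditions for $\Omega(\chi)$, where $\chi=\bar\chi\circ\pi$ and $\psi=\bar\psi\circ(\pi|_H)$ (note $N\leq H$ since $H/N$ is a subgroup of $G/N$, so this lift makes sense and $\psi\in\lin(H)$ because $\bar\psi$ is linear).

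First I would record the two elementary facts about lifting along $\pi$: a character and its lift take the same set of values, so $\mathbb{Q}(\psi)=\mathbb{Q}(\bar\psi)$ and $\mathbb{Q}(\chi)=\mathbb{Q}(\bar\chi)$, which immediately gives condition (iii) for $(H,\psi)$ from condition (iii) for $(H/N,\bar\psi)$. Likewise $\psi\in\lin(H)$ and $H\leq G$ give condition (i). The substantive point is condition (ii): that $\psi^G=\chi$. Here I would invoke the standard compatibility of induction with inflation: for $N\trianglelefteq G$, $N\leq H\leq G$, and a character $\bar\psi$ of $H/N$, one has $(\mathrm{Inf}_{H/N}^{H}\bar\psi)^{G}=\mathrm{Inf}_{G/N}^{G}(\bar\psi^{\,G/N})$. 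Applying this with $\bar\psi^{\,G/N}=\bar\chi$ yields $\psi^G=\mathrm{Inf}_{G/N}^{G}\bar\chi=\chi$, which is exactly what is needed. One should also note that $\chi$ is indeed irreducible (it is the lift of the irreducible $\bar\chi$) and that $\Omega(\chi)$ is the genuine character of an irreducible $\mathbb{Q}$-representation — this is automatic from Lemma \ref{lemma:schurindexpgroup} ($m_{\mathbb{Q}}(\chi)=1$ since $G$ is a $p$-group) together with $\mathbb{Q}(\chi)=\mathbb{Q}(\bar\chi)$, so $\Omega(\chi)$ is obtained from $\Omega(\bar\chi)$ by inflation as well.

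The only place requiring genuine care is the induction–inflation identity; everything else is bookkeeping. I would either cite it from a standard reference (e.g. \cite{I}) or give the one-line verification via Frobenius reciprocity: for any $\theta\in\Irr(G)$, $\langle (\mathrm{Inf}\,\bar\psi)^{G},\theta\rangle_G=\langle \mathrm{Inf}\,\bar\psi,\theta\!\downarrow_H\rangle_H$, which vanishes unless $N\leq\ker\theta$, in which case it equals $\langle\bar\psi,\bar\theta\!\downarrow_{H/N}\rangle_{H/N}=\langle\bar\psi^{\,G/N},\bar\theta\rangle_{G/N}$; comparing with $\langle\mathrm{Inf}(\bar\psi^{\,G/N}),\theta\rangle_G$, which has the same description, gives the equality of characters. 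With that in hand the three conditions are all verified and $(H,\psi)$ is a required pair for $\Omega(\chi)$, completing the proof.
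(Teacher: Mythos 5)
Your proposal is correct and follows the same route as the paper's (much terser) proof: the paper simply asserts that $\bar\psi^{G/N}=\bar\chi$ implies $\psi^G=\chi$ and that $\mathbb{Q}(\bar\chi)=\mathbb{Q}(\bar\psi)$ implies $\mathbb{Q}(\chi)=\mathbb{Q}(\psi)$, which are exactly the two points you verify. Your added detail — the Frobenius-reciprocity check of the induction–inflation compatibility and the Clifford-theoretic observation that the inner products vanish unless $N\leq\ker\theta$ — is a correct justification of what the paper leaves implicit.
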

\begin{proof}
	Note that $\bar{\psi}^{G/N} = \bar{\chi}$, and hence $\psi^G=\chi$. Furthermore, $\mathbb{Q}(\bar{\chi}) = \mathbb{Q}(\bar{\psi})$ implies that $\mathbb{Q}(\chi)=\mathbb{Q}(\psi)$. This completes the proof of Proposition \ref{prop:requiredpairquotientgroup}.
\end{proof}

\begin{remark}
\textnormal{In \cite{Ram}, we classified the required pairs to construct all inequivalent irreducible rational matrix representations of $p$-groups of order at most $p^4$. Moreover, Lemmas \ref{lemma:characterPhi4} and \ref{lemma:characterPhi6} establish that all non-linear irreducible complex characters of groups of order $p^5$ in $\Phi_4 \cup  \Phi_6$ are lifts of characters from certain quotient groups of order at most $p^4$. Therefore, a required pair for constructing an irreducible rational matrix representation of a group of order $p^5$ in $\Phi_4 \cup \Phi_6$ can be determined using Proposition \ref{prop:requiredpairquotientgroup}.}
\end{remark}
We illustrate the technique discussed above to find a required pair to construct an irreducible rational matrix representation of a group of order $p^5$ that belongs to $\Phi_4 \cup \Phi_6$, in Example \ref{examp:reqpair}.
\begin{example}\label{examp:reqpair}
	\textnormal{Consider the group
		\begin{equation*}
			G = \Phi_4(1^5)= \langle \alpha, \alpha_1, \alpha_2, \beta_1, \beta_2 : [\alpha_i, \alpha]=\beta_i, \alpha^p=\alpha_i^p= \beta_i^p=1 \, \, (i=1, 2)\rangle.
		\end{equation*}
		Here, $Z(G)=G'=\langle \beta_1, \beta_2 \rangle \cong C_p \times C_p$ (see \cite[Subsection 4.5]{RJ}). Suppose $K=\langle \beta_1 \rangle$. Then
		\begin{equation*}
			G/K = \langle \alpha K, \alpha_1 K, \alpha_2 K, \beta_2 K \rangle \cong \Phi_2(1^4).
		\end{equation*} 
		Further, $Z(G/K)=\langle \beta_2 K, \alpha_1 K \rangle$, $(G/K)'=\langle \beta_2 K \rangle$ and $G/K$ is a VZ $p$-group of order $p^4$. Thus, for $\mu \in \Irr(Z(G/K) | (G/K)')$, $\bar{\chi}_\mu \in \nl(G/K)$ can be defined as follows:
		\begin{equation*}
			\bar{\chi}_\mu(gK) = \begin{cases}
				p\mu(gK)  &\quad \text{ if } gK \in Z(G/K),\\
				0            &\quad \text{ otherwise }.
			\end{cases}
		\end{equation*}
		Observe that from \cite[Theorem 49]{Ram}, $(H/K, \bar{\psi}_\mu)$ is a required pair to obtain an irreducible rational matrix representation of $G/K$ affording the character $\Omega(\bar{\chi}_\mu)$, where 
		\begin{equation*}
			H/K = \langle \alpha K, \beta_2 K, \alpha_1 K \rangle \, \, \, \, \text{and} \, \, \, \, \bar{\psi}_\mu(hK) = \begin{cases}
				1  &\quad \text{ if } h=\alpha K,\\
				\mu(\beta_2K)  &\quad \text{ if } hK=\beta_2K,\\
				\mu(\alpha_1K)  &\quad \text{ if } hK=\alpha_1K.\\
			\end{cases}
		\end{equation*}
		Hence, from Lemma \ref{lemma:characterPhi4}, $\chi_\mu \in \nl(G)$, where $\chi_\mu$ is the lift of $\bar{\chi}_\mu$. Therefore, from Proposition \ref{prop:requiredpairquotientgroup}, $(H, \psi_\mu)$ is a required pair to obtain an irreducible rational matrix representation of $G$ which affords the character $\Omega(\chi_\mu)$, where $H = \langle \alpha, \alpha_1, \beta_1, \beta_2 \rangle$ and $\psi_\mu$ is the lift of $\bar{\psi}_\mu$ to $H$.}
\end{example}

\subsection{Groups belonging to $\Phi_3$ and $\Phi_9$}
In this subsection, we address the groups of the isoclinic families $\Phi_3$ and $\Phi_9$ together. We begin with Lemma \ref{lemma:uniqueabelianPhi3Phi9}.
\begin{lemma}\label{lemma:uniqueabelianPhi3Phi9}
	Let $G$ be a group of order $p^5$ such that $G \in \Phi_3 \cup \Phi_9$. Then the centralizer of $G'$, denoted as $C_G(G')$, is the unique abelian subgroup of $G$ of index $p$.  
\end{lemma}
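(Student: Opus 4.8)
The plan is to show two things: first, that $C_G(G')$ is an abelian subgroup of index $p$, and second, that it is the only such subgroup. For the families $\Phi_3$ and $\Phi_9$ of order $p^5$, the standard structural data (from \cite[Subsection 4.5]{RJ}) gives $|G'| = p^2$, $|Z(G)| = p$ for $\Phi_3$ and the analogous data for $\Phi_9$; in both cases $G'$ is abelian of rank at most $2$ and $G/Z(G)$ has class $2$. First I would pin down $|C_G(G')|$. Since $G' \trianglelefteq G$ and $G$ is a $p$-group, $C_G(G')$ is normal in $G$ and contains $Z(G)$ and $G'$ (the latter because $G'$ is abelian). One then argues, using the nilpotency class and the fact that $G/G'$ (or $G$ acting on the abelian group $G'$) cannot centralize $G'$ entirely — else $G' \le Z(G)$, contradicting the known order of $Z(G)$ — that $[G : C_G(G')] = p$. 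Concretely, $G$ acts on $G'$ by conjugation with kernel $C_G(G')$, and the image is a $p$-subgroup of $\aut(G')$; the structure of $\Phi_3, \Phi_9$ forces this image to have order exactly $p$.

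Next I would establish abelianness of $A := C_G(G')$. Since $[G:A] = p$, we have $A \trianglelefteq G$ and $G/A$ is cyclic of order $p$, so $A \supseteq G'$. But $A$ centralizes $G'$, and $A$ contains $G'$, so $G' \subseteq Z(A)$. Now $A/G' $ is a quotient of the abelian group $G/G'$, hence abelian, and $G' \le Z(A)$ — this shows $A$ is nilpotent of class at most $2$. To upgrade to abelian, I would use that $A$ has index $p$ and that $A' \subseteq G'$; a direct computation with the presentation (commutators of generators lying in $A$) shows $A' = 1$. Alternatively, a cleaner route: for a group of order $p^5$ with an abelian subgroup of index $p$ — and $\Phi_3, \Phi_9$ are known to possess one — any subgroup of index $p$ containing $Z(G)$ and centralizing $G'$ must coincide with a maximal abelian normal subgroup by an order/centralizer count. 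I expect to verify abelianness by exhibiting the known index-$p$ abelian subgroup $M$ from James's list and checking $M \subseteq C_G(G')$ (immediate since $G' \le M$ and $M$ abelian), then $|M| = |C_G(G')| = p^4$ forces $M = C_G(G')$.

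For uniqueness, suppose $B$ is any abelian subgroup of $G$ with $[G:B] = p$. Then $B \trianglelefteq G$, $B \supseteq G'$, and since $B$ is abelian it centralizes $G'$, so $B \subseteq C_G(G')$. Comparing orders, $|B| = p^4 = |C_G(G')|$, hence $B = C_G(G')$. This is the short, clean argument and it sidesteps case analysis entirely once the order of $C_G(G')$ is known. The main obstacle, then, is the first step: rigorously computing $|C_G(G')| = p^4$ uniformly across both isoclinic families. The risk is that $\Phi_3$ and $\Phi_9$ behave differently enough that one needs to split into the individual groups $\Phi_3(\dots), \Phi_9(\dots)$ from the presentations in \cite{RJ} and check the conjugation action on $G'$ group-by-group; I would try to avoid this by arguing abstractly — $C_G(G') \supsetneq G'$ would force (given $|G'| = p^2$) either $C_G(G') = G$, i.e. $G' \le Z(G)$, impossible, or $|C_G(G')| = p^3$ or $p^4$, and then ruling out $p^3$ using that $G$ must contain an abelian subgroup of index $p$ (a known property of these families) whose existence forces $|C_G(G')| \ge p^4$. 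If the abstract argument stalls, the fallback is the finite, mechanical check over the (few) groups in $\Phi_3 \cup \Phi_9$ using their defining relations.
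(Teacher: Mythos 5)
Your uniqueness half is exactly the paper's and is complete: an abelian $B$ of index $p$ is normal, contains $G'$, hence centralizes $G'$, so $B \subseteq C_G(G') \subsetneq G$ (the inclusion is proper because $G' \nsubseteq Z(G)$ in nilpotency class $\geq 3$), forcing $B = C_G(G')$. The genuine gap is in the first half: you never actually prove that $C_G(G')$ is abelian of index $p$, and the concrete steps you offer rest on incorrect structural data. For $\Phi_3$ one has $|Z(G)| = p^2$, not $p$; for $\Phi_9$ one has $|G'| = p^3$ with $G'$ elementary abelian of rank $3$, so $\aut(G') \cong GL_3(\mathbb{Z}/p\mathbb{Z})$ has Sylow $p$-subgroups of order $p^3$ and the image of the conjugation map $T : G \to \aut(G')$ is \emph{not} forced to have order $p$ by a Sylow count. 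That argument is only valid for $\Phi_3$, where $G' \cong C_p \times C_p$. Your abstract attempt at abelianness also stalls exactly where you say it does: $G' \leq Z(A)$ together with $A/G'$ abelian yields only class $\leq 2$.

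The paper closes both halves with devices your proposal lacks. For $\Phi_9$ it uses $\cd(G) = \{1,p\}$ and Isaacs' Theorem 12.11: since $|G : Z(G)| = p^4 \neq p^3$, that theorem forces the existence of an abelian subgroup of index $p$, which the uniqueness argument then identifies with $C_G(G')$; note this route is unavailable for $\Phi_3$, where $|G:Z(G)| = p^3$ and Theorem 12.11 gives nothing. For $\Phi_3$ the paper runs your conjugation-action argument (legitimate there since $G'$ has rank $2$) to get $[G : \ker T] = p$, and then upgrades class $\leq 2$ to abelian by the observation you are missing: $G'Z(G) \leq Z(\ker T)$ and $|G'Z(G)| = |G'|\,|Z(G)|/|G' \cap Z(G)| = p^3$, so $Z(\ker T)$ has index at most $p$ in $\ker T$, which forces $\ker T$ abelian. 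Your fallback of reading an abelian maximal subgroup off each of James's presentations would also settle the matter, but it is precisely the case-by-case verification you set out to avoid, and the proposal does not carry it out.
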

\begin{proof} Let $G$ be a group of order $p^5$ such that $G \in \Phi_9$. According to \cite[Subsection 4.5]{RJ}, it follows that $|Z(G)|=p$, and from \cite[Subsection 4.1]{RJ}, $\cd(G)=\{1, p\}$. Hence, by \cite[Theorem 12.11]{I}, $G$ possesses an abelian subgroup of index $p$.\\
Furthermore, consider $G$ as a group of order $p^5$ such that $G \in \Phi_3$. According to \cite[Subsection 4.1]{RJ}, $G' \cong C_p \times C_p$. Now, let $G$ act on $G'$ by conjugation, yielding a homomorphism
$$T : G \rightarrow \aut(G'),$$
where $T(g) : G' \rightarrow G'$ is an automorphism defined as $T(g)(x)=gxg^{-1}$ for all $x \in G'$. Since $G' \nsubseteq Z(G)$ (as $G$ is of nilpotency class 3) and $\aut(G') \cong GL_2(\mathbb{Z}/p\mathbb{Z})$, it follows that
$$\textnormal{Image}(T) \cong G/\ker(T) \cong C_p.$$
Thus, $\ker(T)$ is a subgroup of $G$ of index $p$.\\
\textbf{Claim :} $\ker(T)$ is an abelian subgroup of $G$.\\
\textbf{Proof of the claim :} Since $G'$ and $Z(G)$ act trivially on $G'$ by conjugation, it follows that $G'\subset \ker(T)$ and $Z(G)\subset \ker(T)$. For $x \in G'$, $z \in Z(G)$ and $h \in \ker(T)$, 
$$(xz)h  = xhz  = h(h^{-1}xh)z = hT(h^{-1})(x)z = h(xz).$$
Thus, $G'Z(G) \subseteq Z(\ker(T))$. Moreover, since $G' \nsubseteq Z(G)$ and $|Z(G)| = p^2$ (see \cite[Subsection 4.5]{RJ}), $G'Z(G)$ forms a subgroup of $Z(\ker(T))$ of order $p^3$. Hence, $\ker(T)$ is an abelian subgroup of $G$. This completes the proof of the claim.\\
Now, suppose $G$ is a non-abelian $p$-group of order $p^5$ in $\Phi_3$ or $\Phi_9$. As $G' \nsubseteq Z(G)$, $C_G(G') \neq G$. Furthermore, let $H$ be an abelian subgroup of $G$ of index $p$. Since $G/H$ is abelian, $G' \subseteq H$. Consequently, $C_G(G') \supseteq H$, implying $C_G(G') = H$. Thus, $H$ is unique. This completes the proof of Lemma \ref{lemma:uniqueabelianPhi3Phi9}.
\end{proof}
Lemma \ref{lemma:characterPhi3Phi9} provides a method to determine all non-linear irreducible complex characters of groups of order $p^5$ that belong to either $\Phi_3$ or $\Phi_9$.
\begin{lemma}\label{lemma:characterPhi3Phi9}
		Let $G$ be a group of order $p^5$ such that $G \in \Phi_3 \cup \Phi_9$. Then we have the following.
	\begin{enumerate}
		\item $\cd(G)=\{1, p\}$.
		\item If $\psi \in \Irr(C_G(G') | G')$, then $\psi^G \in \nl(G)$. Furthermore, for every $\chi \in \nl(G)$, there exists some $\psi \in \Irr(C_G(G') | G')$ such that $\chi = \psi^G$.
	\end{enumerate}
\end{lemma}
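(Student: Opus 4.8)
The plan is to combine the structural facts about $G \in \Phi_3 \cup \Phi_9$ established in Lemma \ref{lemma:uniqueabelianPhi3Phi9} with standard Clifford-theoretic machinery. First I would dispose of part (1): for $G \in \Phi_9$ we have $|Z(G)| = p$, so $|G/Z(G)| = p^4$ and every $\chi \in \nl(G)$ satisfies $\chi(1)^2 \mid p^4$, forcing $\chi(1) \in \{p, p^2\}$; one rules out $\chi(1) = p^2$ because by Lemma \ref{lemma:uniqueabelianPhi3Phi9} $G$ has an abelian subgroup $A$ of index $p$, so $\chi(1) \le [G:A] = p$. For $G \in \Phi_3$ we have $|Z(G)| = p^2$ and the same abelian-subgroup-of-index-$p$ fact gives $\chi(1) \le p$; combined with non-linearity this yields $\cd(G) = \{1, p\}$. (Alternatively one can just cite \cite[Subsection 4.1]{RJ} as was done inside the previous lemma.)

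For part (2), write $A = C_G(G')$, the unique abelian subgroup of index $p$ from Lemma \ref{lemma:uniqueabelianPhi3Phi9}. Given $\psi \in \Irr(A \mid G')$, I would first check that $\psi^G$ is irreducible. Since $A$ is abelian, $\psi$ is linear, and $\psi^G$ is irreducible precisely when $\psi$ is not invariant under the conjugation action of $G/A \cong C_p$ on $\Irr(A)$; but if $\psi$ were $G$-invariant then, because $G/A$ is cyclic, $\psi$ would extend to a linear character of $G$, whence $G' \subseteq \ker\psi$, contradicting $\psi \in \Irr(A \mid G')$. Hence $\psi^G \in \Irr(G)$ with $\psi^G(1) = [G:A] = p$, so $\psi^G \in \nl(G)$.

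Conversely, take $\chi \in \nl(G)$; by part (1), $\chi(1) = p = [G:A]$. Restrict to $A$: by Clifford's theorem $\chi \downarrow_A = e\sum_{i} \psi_i$ where the $\psi_i$ form a single $G$-orbit of linear characters of $A$ and $ef = \chi(1) = p$ with $f$ the orbit length dividing $[G:A] = p$. Since $\chi$ is non-linear it cannot have $A$ in a way that makes $\chi \downarrow_A$ irreducible-of-degree-one; the case $f = 1$ would mean $\chi \downarrow_A = p\psi_1$ with $\psi_1$ $G$-invariant, but then $\chi$ and the extension of $\psi_1$ to $G$ would force $\chi(1) = 1$ (an invariant linear character of a normal subgroup of prime index extends), a contradiction. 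Therefore $f = p$, $e = 1$, and $\chi \downarrow_A = \sum_{i=1}^p \psi_i$; by Frobenius reciprocity $\langle \psi_1^G, \chi\rangle = \langle \psi_1, \chi\downarrow_A\rangle = 1$, and comparing degrees $\psi_1^G(1) = p = \chi(1)$ gives $\chi = \psi_1^G$. It remains to see $\psi_1 \in \Irr(A \mid G')$: if $G' \subseteq \ker\psi_1$ then $G'$ lies in the kernel of every $G$-conjugate $\psi_i$, hence in $\ker\chi$, making $\chi$ a character of the abelian-ish quotient $G/G'$... more precisely $\chi$ would factor through $G/G'$ which is abelian, forcing $\chi(1) = 1$, a contradiction. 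So $\psi := \psi_1$ works.

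The main obstacle is the bookkeeping around the case analysis $f = 1$ versus $f = p$ in Clifford's theorem and justifying cleanly that an invariant linear character of the index-$p$ normal subgroup $A$ extends to $G$ (this uses that $G/A$ is cyclic, so $H^2(G/A, \mathbb{C}^\times)$ is trivial and there is no obstruction to extension); everything else is routine. One should also double-check that $A = C_G(G')$ is genuinely normal in $G$ — it is, being the unique abelian subgroup of its index, hence characteristic.
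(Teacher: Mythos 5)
Your proof is correct, and it diverges from the paper's in one substantive place. Part (1) is handled the same way in spirit: the paper simply cites It\^o's theorem \cite[Theorem 6.15]{I} applied to the abelian (automatically normal, being of index $p$) subgroup from Lemma \ref{lemma:uniqueabelianPhi3Phi9}, which is interchangeable with your bound $\chi(1)\leq [G:A]$ for abelian $A$. For the first claim of part (2) the two arguments are also essentially equivalent: the paper argues that if $\psi^G$ were reducible it would be a sum of linear characters, giving $G'\subseteq\ker(\psi^G)\subseteq\ker(\psi)$, a contradiction; you phrase the same obstruction via non-invariance of $\psi$ and extendibility over the cyclic quotient $G/C_G(G')$. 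The genuine difference is in the surjectivity claim. You restrict an arbitrary $\chi\in\nl(G)$ to $A=C_G(G')$ and run Clifford theory (ruling out the fully ramified/invariant case via extension and Gallagher) to produce $\psi_1$ directly with $\chi=\psi_1^G$ and $G'\not\subseteq\ker(\psi_1)$. The paper instead never restricts $\chi$: it notes that each $\psi\in\Irr(C_G(G')\mid G')$ has inertia group $C_G(G')$, so the induction map is exactly $p$-to-one, and then verifies the counts $|\Irr(C_G(G')\mid G')|/p=p^3-p$ for $\Phi_3$ and $p^3-1$ for $\Phi_9$, matching $|\nl(G)|$ in each case. Your route is more self-contained and does not depend on the family-specific orders of $G'$ and $|\nl(G)|$, at the cost of the $f=1$ versus $f=p$ case analysis; the paper's counting is shorter but leans on the classification data from \cite{RJ}. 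Both are complete proofs.
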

\begin{proof} Suppose $G$ is a group of order $p^5$ such that $G \in \Phi_3 \cup \Phi_6$.
	\begin{enumerate}
		\item It follows from \cite[Theorem 6.15]{I}.
		\item	Consider $\psi \in \Irr(C_G(G') | G')$. Suppose that $\psi^G \notin \nl(G)$. This implies that $\psi^G$ is a sum of some linear characters of $G$. This implies that $G' \subseteq \ker(\psi^G) \subset \ker(\psi)$, which is a contradiction. Hence $\psi^G \in \nl(G)$.\\
		Now, let $\psi \in \Irr(C_G(G') | G')$. Then $\psi^G \in \nl(G)$ which implies that the inertia group $I_G(\psi)= C_G(G')$ \cite[Problem 6.1]{I}. Furthermore, ${\psi^{G}}\downarrow_{C_G(G')} = \sum_{i=1}^{p} \psi_{i}$, where $\psi_{i}$'s are conjugates of $\psi$ in $G$ and $p = |G/I_{G}(\psi)|$. Hence, there are $p$ conjugates of $\psi$ and $\psi^G=\psi_i^G\in \nl(G)$ for each $i$. Observe that if $G \in \Phi_3$, then $|G'|=p^2$ and $|\nl(G)|=\frac{|\Irr(C_G(G') | G')|}{p}=p^3-p$. Similarly, if $G \in \Phi_9$, then $|G'|=p^3$ and $|\nl(G)|=\frac{|\Irr(C_G(G') | G')|}{p}=p^3-1$. This completes the proof of Lemma \ref{lemma:characterPhi3Phi9}. \qedhere
	\end{enumerate}
\end{proof}
Now, we proceed by proving certain results that will aid in establishing Theorems \ref{thm:reqpairPhi3} and \ref{thm:reqpairPhi9}. We begin with Lemma \ref{lemma:fieldextension}.
\begin{lemma}\label{lemma:fieldextension}
	Let $G$ be a $p$-group (where $p$ is an odd prime), and let $1 \neq \chi \in \Irr(G)$. Then $\mathbb{Q}(\chi) \neq \mathbb{Q}$.
\end{lemma}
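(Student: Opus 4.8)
The plan is to reduce at once to the case of a faithful character and then extract an irrationality from a central element of order $p$. First I would set $N = \ker(\chi)$ and pass to $\bar{G} = G/N$. Since $\chi \neq 1_G$ we have $N \neq G$, so $\bar{G}$ is a nontrivial $p$-group, and $\chi$ is the inflation of a character $\bar{\chi} \in \Irr(\bar{G})$ with $\ker(\bar{\chi}) = 1$ and $\mathbb{Q}(\chi) = \mathbb{Q}(\bar{\chi})$. Hence it suffices to prove the statement for a nontrivial $p$-group with $\chi$ faithful; I keep writing $G$, $\chi$ for this reduced situation.

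Next I would choose $z \in Z(G)$ of order $p$, which exists because the center of a nontrivial $p$-group is nontrivial and therefore contains an element of order $p$. Let $\rho$ be a complex representation affording $\chi$. As $z$ is central, Schur's lemma forces $\rho(z) = \omega\,I$ for a scalar $\omega$ with $\omega^{p} = 1$; since $\rho$ is faithful and $z \neq 1$ we have $\omega \neq 1$, so $\omega$ is a primitive $p$-th root of unity. Taking traces gives $\chi(z) = \chi(1)\,\omega$.

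Finally, because $p$ is an odd prime, $[\mathbb{Q}(\zeta_p):\mathbb{Q}] = p - 1 \geq 2$, so $\omega \notin \mathbb{Q}$; as $\chi(1)$ is a positive integer, $\chi(z) = \chi(1)\,\omega \notin \mathbb{Q}$. Therefore $\mathbb{Q} \subsetneq \mathbb{Q}(\chi(z)) \subseteq \mathbb{Q}(\chi)$, which proves the claim after undoing the reduction. I do not anticipate a genuine obstacle: the only ingredients are the standard facts that a nontrivial $p$-group has nontrivial center, that a central element acts as a scalar on an irreducible representation, and that $\zeta_p$ is irrational for $p$ an odd prime.
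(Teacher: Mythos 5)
Your proof is correct and takes essentially the same route as the paper: both pass to the faithful character on $G/\ker(\chi)$ and observe that a nontrivial central element must act as a nontrivial $p$-power root of unity times the identity, so $\chi$ takes an irrational value there. Your version is marginally cleaner in that it handles linear and non-linear characters uniformly via Schur's lemma on an order-$p$ central element, whereas the paper treats the linear case separately and invokes Clifford's theorem for the rest, but the underlying idea is identical.
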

\begin{proof} 
	   	Let $1 \neq \chi \in \lin(G)$. Then $\mathbb{Q}(\chi) \neq \mathbb{Q}$. Further, let $\chi \in \nl(G)$ and define $\bar{\chi} \in \nl(G/\ker(\chi))$ as $\bar{\chi}(g\ker(\chi)) = \chi(g)$. From Clifford's theorem (see \cite[Theorem 6.2]{I}),
	   	$${\bar{\chi}}\downarrow_{Z(G/\ker(\chi))}(x) = \chi(1)\mu(x),$$
	   	where $\mu \in \Irr(Z(G/\ker(\chi)))$ and $x \in Z(G/\ker(\chi))$. Note that $\bar{\chi}$ is a faithful character of $G/\ker(\chi)$. Since $G/\ker(\chi)$ is a $p$-group, $Z(G/\ker(\chi))$ is non-trivial. Hence, $\mu$ is also non-trivial. Therefore, we can conclude that $\mathbb{Q}(\chi) \neq \mathbb{Q}$.
\end{proof}

Lemma \ref{lemma:reqpairdirectproduct} identifies the required pairs associated with the direct product of groups.
\begin{lemma}\label{lemma:reqpairdirectproduct}
	Let $G=G_1\times G_2$, where $G_2$ is abelian. Let $\chi \in \Irr(G)$ such that $\chi = \chi_1\chi_2$, where $\chi_1\in \Irr(G_1)$ and $\chi_2\in\Irr(G_2)$. Suppose $(H_1, \psi_1)$ is a required pair to find an irreducible rational matrix representation of $G_1$ which affords the character $\Omega(\chi_1)$. Then $(H_1\times G_2, \psi_1\chi_2)$ is a required pair to find an irreducible rational matrix representation of $G$ which affords the character $\Omega(\chi)$.
\end{lemma}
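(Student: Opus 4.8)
The plan is to verify directly that the pair $(H_1 \times G_2, \psi_1\chi_2)$ satisfies the three defining properties of a required pair as given in Algorithm \ref{algorithm}: namely that $H_1\times G_2$ is a subgroup of $G$, that $\psi_1\chi_2 \in \lin(H_1\times G_2)$, that $(\psi_1\chi_2)^G = \chi$, and that $\mathbb{Q}(\psi_1\chi_2) = \mathbb{Q}(\chi)$. The first two are immediate: $H_1 \leq G_1$ gives $H_1\times G_2 \leq G_1\times G_2 = G$, and since $\psi_1 \in \lin(H_1)$ and $\chi_2\in\Irr(G_2) = \lin(G_2)$ (as $G_2$ is abelian), the product $\psi_1\chi_2$ is a linear character of $H_1\times G_2$.

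For the induced character, I would use the fact that induction commutes with products in the sense that for $H_1 \leq G_1$ one has $(\psi_1\chi_2)^{G_1\times G_2} = \psi_1^{G_1}\cdot\chi_2^{G_2} = \psi_1^{G_1}\cdot\chi_2$, since $G_2 = G_2$ contributes trivially to the induction. Because $(H_1,\psi_1)$ is a required pair for $\chi_1$, we have $\psi_1^{G_1} = \chi_1$, so $(\psi_1\chi_2)^G = \chi_1\chi_2 = \chi$, as needed. (One can cite the standard formula $\mathrm{Ind}_{H_1\times G_2}^{G_1\times G_2}(\psi_1\boxtimes\chi_2) = \mathrm{Ind}_{H_1}^{G_1}(\psi_1)\boxtimes\chi_2$ for external tensor products, or verify it on the level of the induced-character formula.)

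For the field of character values, one inclusion is clear: $\mathbb{Q}(\chi) = \mathbb{Q}(\chi_1\chi_2) \subseteq \mathbb{Q}(\chi_1,\chi_2)$, and since $(H_1,\psi_1)$ is a required pair we have $\mathbb{Q}(\chi_1) = \mathbb{Q}(\psi_1)$, giving $\mathbb{Q}(\psi_1\chi_2)\subseteq\mathbb{Q}(\psi_1,\chi_2) = \mathbb{Q}(\chi_1,\chi_2)$. For the reverse direction I expect the main subtlety: one must show $\mathbb{Q}(\psi_1\chi_2) = \mathbb{Q}(\psi_1,\chi_2)$ and likewise $\mathbb{Q}(\chi_1\chi_2) = \mathbb{Q}(\chi_1,\chi_2)$. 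This follows because $\psi_1$ takes values in $\mathbb{Q}(\zeta_a)$ and $\chi_2$ in $\mathbb{Q}(\zeta_b)$ where $a = o(\psi_1)$, $b = o(\chi_2)$ are prime powers (powers of $p$, since $G$ is a $p$-group in the intended application, though the lemma as stated is for general $G_2$); the cyclotomic fields $\mathbb{Q}(\zeta_a)$ and $\mathbb{Q}(\zeta_b)$ are linearly disjoint unless both are nontrivial extensions sharing a common subfield, and the product of a generator of each still generates the compositum. In the fully general statement one should argue via the orders: $\psi_1\chi_2$ has order $\mathrm{lcm}(o(\psi_1), o(\chi_2))$ and $\mathbb{Q}(\psi_1\chi_2) = \mathbb{Q}(\zeta_{\mathrm{lcm}(o(\psi_1),o(\chi_2))}) = \mathbb{Q}(\zeta_{o(\psi_1)}, \zeta_{o(\chi_2)}) = \mathbb{Q}(\psi_1,\chi_2)$, using that for a linear character $\lambda$ of order $n$ one has $\mathbb{Q}(\lambda) = \mathbb{Q}(\zeta_n)$ and that $\mathbb{Q}(\zeta_m,\zeta_n) = \mathbb{Q}(\zeta_{\mathrm{lcm}(m,n)})$. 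Applying the same reasoning to $\chi_1\chi_2$ (noting $\mathbb{Q}(\chi_1) = \mathbb{Q}(\psi_1)$ implies $o(\chi_1)$ and $o(\psi_1)$ generate the same cyclotomic field) yields $\mathbb{Q}(\chi) = \mathbb{Q}(\chi_1,\chi_2) = \mathbb{Q}(\psi_1,\chi_2) = \mathbb{Q}(\psi_1\chi_2)$, completing the verification. The hard part is precisely this bookkeeping with compositums of cyclotomic fields; everything else is formal.
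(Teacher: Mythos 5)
The paper states this lemma without proof, so there is no argument of record to compare against; judged on its own, your verification is correct and is surely the intended one: check $H_1\times G_2\leq G$, $\psi_1\chi_2\in\lin(H_1\times G_2)$, $(\psi_1\chi_2)^G=\chi$ via $\mathrm{Ind}_{H_1\times G_2}^{G_1\times G_2}(\psi_1\boxtimes\chi_2)=\mathrm{Ind}_{H_1}^{G_1}(\psi_1)\boxtimes\chi_2$, and equality of character fields. One remark on the last step, which you flag as ``the hard part'': it is in fact immediate and needs none of the cyclotomic bookkeeping. For any characters $\theta_1$ of $A$ and linear $\theta_2$ of $B$, evaluating $\theta_1\theta_2$ at $(a,1)$ gives $\theta_1(a)\theta_2(1)=\theta_1(a)$ and at $(1,b)$ gives $\theta_1(1)\theta_2(b)$ with $\theta_1(1)\in\mathbb{Q}^{\times}$, so $\mathbb{Q}(\theta_1\theta_2)=\mathbb{Q}(\theta_1,\theta_2)$ directly. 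Applied to both $\psi_1\chi_2$ and $\chi_1\chi_2$ this gives $\mathbb{Q}(\psi_1\chi_2)=\mathbb{Q}(\psi_1,\chi_2)=\mathbb{Q}(\chi_1,\chi_2)=\mathbb{Q}(\chi)$ at once. This also sidesteps the one genuinely shaky phrase in your write-up, namely the appeal to ``$o(\chi_1)$'' when transferring the lcm argument to $\chi_1\chi_2$: the character $\chi_1$ need not be linear, so it has no order in the sense you use, and while $\mathbb{Q}(\chi_1)$ happens to be cyclotomic for odd-order $p$-groups, invoking that fact is unnecessary. With that substitution the proof is complete and, if anything, more general than the lemma requires.
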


\begin{lemma}\label{lemma:centernon-cyclic}
	Let $G$ be a non-abelian $p$-group such that $Z(G)$ is not cyclic, and let $\chi \in \Irr(G)$. Then $\chi$ is the lift of $\bar{\chi} \in \Irr(G/K)$ for some non-trivial subgroup $K$ of $Z(G)$.
\end{lemma}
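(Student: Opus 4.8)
The plan is to show that a faithful irreducible character cannot exist on a $p$-group with non-cyclic center, so that $\ker(\chi)$ must be non-trivial; setting $K = \ker(\chi)$ (or any non-trivial subgroup of $Z(G)$ contained in it) then realizes $\chi$ as the lift of a character of $G/K$. First I would reduce to the case $\ker(\chi) = 1$ and derive a contradiction. So assume $\chi \in \Irr(G)$ is faithful. By Clifford's theorem (\cite[Theorem 6.2]{I}), the restriction $\chi\downarrow_{Z(G)}$ is $\chi(1)$ times a single linear character $\mu \in \Irr(Z(G))$, since $Z(G)$ is central and hence every irreducible constituent of $\chi\downarrow_{Z(G)}$ is $G$-conjugate, forcing them to coincide. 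The key point is that $\ker(\mu) \subseteq \ker(\chi)$: indeed for $z \in \ker(\mu)$ we have $\chi(z) = \chi(1)\mu(z) = \chi(1)$, so $z \in \ker(\chi)$. Hence faithfulness of $\chi$ forces $\ker(\mu) = 1$, i.e. $\mu$ is a faithful linear character of $Z(G)$, which makes $Z(G)$ cyclic (a finite abelian group admitting a faithful linear character is cyclic). This contradicts the hypothesis that $Z(G)$ is not cyclic.

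Therefore $\ker(\chi) \neq 1$. Since $\ker(\chi) \trianglelefteq G$ and $G$ is a $p$-group, $\ker(\chi) \cap Z(G) \neq 1$ (every non-trivial normal subgroup of a $p$-group meets the center non-trivially). Pick any non-trivial subgroup $K \leq \ker(\chi) \cap Z(G)$; then $K$ is a non-trivial subgroup of $Z(G)$ with $K \subseteq \ker(\chi)$, so $\chi$ is constant on cosets of $K$ and the formula $\bar\chi(gK) = \chi(g)$ gives a well-defined class function on $G/K$ which is irreducible (its values and inner product with itself are unchanged), i.e. $\chi$ is the lift of $\bar\chi \in \Irr(G/K)$. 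Alternatively one may simply take $K = \ker(\chi)$ itself if $\ker(\chi) \subseteq Z(G)$, but since that need not hold in general it is cleaner to intersect with $Z(G)$ first; either way the statement follows.

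I do not anticipate a serious obstacle here: the argument is a standard application of Clifford's theorem together with the two elementary facts that a finite abelian group with a faithful character is cyclic, and that a non-trivial normal subgroup of a $p$-group intersects the center. The only point requiring a little care is the reduction step — one must be sure to extract $K$ from $Z(G)$ rather than from $\ker(\chi)$ directly, since $\ker(\chi)$ itself may fail to be central. If one prefers, the whole thing can be phrased without mentioning faithfulness: work directly with $\chi\downarrow_{Z(G)} = \chi(1)\mu$, observe $\ker(\mu) \trianglelefteq G$ is non-trivial because $Z(G)$ non-cyclic implies $\mu$ non-faithful, and set $K = \ker(\mu) \subseteq \ker(\chi)$.
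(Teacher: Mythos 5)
Your proof is correct and follows essentially the same route as the paper: show $\chi$ cannot be faithful when $Z(G)$ is non-cyclic, then take $K=\ker(\chi)\cap Z(G)$ (non-trivial since a non-trivial normal subgroup of a $p$-group meets the center) and lift. The only difference is that you derive the non-faithfulness from Clifford's theorem, whereas the paper simply cites \cite[Theorem 2.32]{I}; your care in intersecting $\ker(\chi)$ with $Z(G)$ rather than using $\ker(\chi)$ directly matches the paper's choice of $K$.
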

\begin{proof}
	As $G$ is a $p$-group and $Z(G)$ is not cyclic, $\chi$ is not faithful (see \cite[Theorem 2.32]{I}). Additionally, $\ker(\chi)\cap Z(G)$ is non-trivial, implying that $\chi$ is the lift of $\bar{\chi} \in \Irr(G/K)$, where $K=\ker(\chi)\cap Z(G)$.
\end{proof}

 Theorem \ref{thm:reqpairPhi3} classifies a required pair for an irreducible rational matrix representation of groups of order $p^5$ in $\Phi_3$.
\begin{theorem}\label{thm:reqpairPhi3}
	Let $G$ be a group of order $p^5$ ($p\geq 5$) such that $G \in \Phi_3$. Then we have the following. 
	\begin{enumerate}
		\item For $G \in \Phi_3 \setminus \{\Phi_3(311)b_r~(r=1, \nu), \Phi_3(221)a, \Phi_3(2111)e \}$, consider $\chi \in \nl(G)$ such that $\chi = \psi^G$ for some $\psi \in \Irr(C_G(G') | G')$. Then $(C_G(G'), \psi)$ is a required pair to construct an irreducible rational matrix representation of $G$ that affords the character $\Omega(\chi)$.
		\item For $G \in \{\Phi_3(221)a, \Phi_3(2111)e \}$, there is a bijection between the sets $\{\bar{\chi} \in \nl(G/K | Z(G)/K) : C_p \cong K < Z(G)\}$ and $\nl(G|Z(G))$, where $\bar{\chi}$ lifts to $\chi \in \nl(G)$. Consequently, the required pairs to construct irreducible rational matrix representations of these groups can be determined using Proposition $\ref{prop:requiredpairquotientgroup}$.
		\item For $G=\Phi_3(311)b_r=\langle \alpha, \alpha_1, \alpha_2, \alpha_3~:~  [\alpha_1, \alpha]=\alpha_{2}, [\alpha_2, \alpha]^r=\alpha_1^{p^2}=\alpha_3, \alpha^p=\alpha_2^p=\alpha_3^p=1\rangle$ (where $r=1, \nu$), $C_G(G')= \langle \alpha_1, \alpha_2 \rangle \cong C_{p^3} \times C_p$. Let $\chi \in \nl(G)$. Then we have the following. 
		\begin{enumerate}
			\item If $\chi = \psi^G$ for some $\psi \in \Irr(C_G(G') | G')$ such that $\psi(\alpha_1)=\zeta_{p^3}$, then $(C_G(G'), \psi)$ is a required pair to construct an irreducible rational matrix representation of $G$ that affords the character $\Omega(\chi)$.
			\item There is a bijection between the sets $\nl(G/K)$, where $K=\langle \alpha_3=\alpha_1^{p^2} \rangle <Z(G)$ and $\{\chi \in\nl(G) : \mathbb{Q}(\chi) \neq \mathbb{Q}(\zeta_{p^3})\}$. Consequently, the required pairs to construct irreducible rational matrix representations of these groups that afford the character $\Omega(\chi)$ for some $\chi \in \nl(G)$ such that $\mathbb{Q}(\chi) \neq \mathbb{Q}(\zeta_{p^3})$ can be determined using Proposition $\ref{prop:requiredpairquotientgroup}$.
		\end{enumerate}
	\end{enumerate}
\end{theorem}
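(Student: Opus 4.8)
The plan is to verify each clause of Theorem~\ref{thm:reqpairPhi3} by checking the three defining conditions of a required pair: that the chosen subgroup $H$ contains an appropriate linear character $\psi$ with $\psi^G = \chi$, that $\mathbb{Q}(\psi) = \mathbb{Q}(\chi)$, and (implicitly) that such data indeed produces an irreducible $\mathbb{Q}$-representation via Algorithm~\ref{algorithm}. For part~(1), the structural input is Lemma~\ref{lemma:characterPhi3Phi9}: every $\chi \in \nl(G)$ is of the form $\psi^G$ with $\psi \in \Irr(C_G(G')\mid G')$, and by Lemma~\ref{lemma:uniqueabelianPhi3Phi9} the subgroup $A := C_G(G')$ is the unique abelian subgroup of index $p$, so $A$ is a natural candidate for $H$. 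Since $[G:A] = p$ and $\psi^G$ is irreducible, the only thing left to check is $\mathbb{Q}(\psi) = \mathbb{Q}(\chi)$. One inclusion is automatic ($\mathbb{Q}(\chi) \subseteq \mathbb{Q}(\psi)$ always, since $\chi = \psi^G$ has values that are sums of $\psi$-values over coset representatives, lying in $\mathbb{Q}(\psi)$ after the Galois-orbit argument — more precisely $\mathbb{Q}(\chi) \subseteq \mathbb{Q}(\psi)$ because $\psi$ is a constituent of $\chi\downarrow_A$). For the reverse inclusion, I would argue that for the groups in the stated list, $A \cong C_{p^2}\times C_p$ or $C_p^3$ (depending on the presentation), so $\mathbb{Q}(\psi) \subseteq \mathbb{Q}(\zeta_{p^2})$, and the constraint $G'\not\subseteq\ker(\psi)$ together with $\mathbb{Q}(\chi)\neq\mathbb{Q}$ (Lemma~\ref{lemma:fieldextension}) forces $\mathbb{Q}(\psi) = \mathbb{Q}(\zeta_p)$ or $\mathbb{Q}(\zeta_{p^2})$, matching $\mathbb{Q}(\chi)$; this is the routine case-by-case verification the authors defer.

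For part~(2), covering $\Phi_3(221)a$ and $\Phi_3(2111)e$, the obstruction is precisely that the naive choice $H = C_G(G')$ fails the field condition — the linear character $\psi$ living on the abelian subgroup of index $p$ has a strictly larger field of values than $\chi$ (because a primitive $p^2$-th root creeps in from the cyclic $C_{p^2}$ factor even though $\chi$ only "sees" a $p$-th root). The fix is to pass to quotients: I would establish the claimed bijection between $\{\bar\chi \in \nl(G/K\mid Z(G)/K) : C_p\cong K < Z(G)\}$ and $\nl(G\mid Z(G))$ by a counting argument mirroring Lemmas~\ref{lemma:characterPhi4} and \ref{lemma:characterPhi6}: there are $p+1$ subgroups $K$ of order $p$ in $Z(G)$, each quotient $G/K$ has order $p^4$ with a known count of relevant non-linear characters, and a character $\chi\in\nl(G\mid Z(G))$ lies in $\ker$ above exactly one such $K$ (by Lemma~\ref{lemma:centernon-cyclic}, since $Z(G)\cong C_p\times C_p$ is non-cyclic, $\chi$ is a lift from $G/(\ker\chi\cap Z(G))$, and one checks $\ker\chi\cap Z(G)$ has order $p$ exactly when $Z(G)\not\subseteq\ker\chi$). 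Once the bijection is in place, Proposition~\ref{prop:requiredpairquotientgroup} transports the required pairs for the order-$p^4$ quotients (which were classified in \cite{Ram}) back to $G$, so no further work is needed.

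Part~(3) treats $\Phi_3(311)b_r$, where $C_G(G') = \langle\alpha_1,\alpha_2\rangle \cong C_{p^3}\times C_p$, and the dichotomy is governed by whether $\psi(\alpha_1)$ is a primitive $p^3$-th root of unity. In case~(a), $\psi$ is faithful on the $C_{p^3}$ factor, so $\mathbb{Q}(\psi) = \mathbb{Q}(\zeta_{p^3})$; I would verify $\mathbb{Q}(\chi) = \mathbb{Q}(\zeta_{p^3})$ as well (the induced character $\chi = \psi^G$ vanishes off $A$ and equals $p\,\psi$ on the relevant part of $Z(G)$-type elements, so its values generate the same cyclotomic field), and since $[G:A]=p$, $(A,\psi)$ is a required pair. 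In case~(b), when $\psi(\alpha_1)$ is not a primitive $p^3$-th root, the element $\alpha_3 = \alpha_1^{p^2}$ lies in $\ker\psi$, hence in $\ker\chi$; thus $\chi$ is a lift from $G/\langle\alpha_3\rangle$. I would check that $G/\langle\alpha_3\rangle$ has order $p^4$ and set up the bijection between $\nl(G/\langle\alpha_3\rangle)$ and $\{\chi\in\nl(G):\mathbb{Q}(\chi)\neq\mathbb{Q}(\zeta_{p^3})\}$ by comparing cardinalities (the number of non-linear characters of $G$ with a $p^3$-root value equals $|\nl(G)| - |\nl(G/\langle\alpha_3\rangle)|$, which should match the count of $\psi\in\Irr(A\mid G')$ faithful on $C_{p^3}$, divided by $p$), and then invoke Proposition~\ref{prop:requiredpairquotientgroup} again.

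The main obstacle is the field-of-values bookkeeping in parts~(2) and (3b): one must show the dividing line between "the abelian subgroup of index $p$ works directly" and "one must descend to a quotient" is exactly where the statement draws it, i.e., that $\mathbb{Q}(\psi) = \mathbb{Q}(\chi)$ fails precisely for the characters that are non-faithful on the cyclic-of-order-$p^2$-or-$p^3$ part of $C_G(G')$. This requires combining the explicit presentations from \cite{RJ}, the formula $|\ker\psi| = p|\ker\mu|$-type constraints from \cite{Ram}, and Lemma~\ref{lemma:reqpairdirectproduct} (for the cases where $C_G(G')$ splits as a direct product with an abelian factor) to pin down $\mathbb{Q}(\psi)$ on the nose; everything else is the "routine verification" the authors mention, guided by the earlier results already proved in the excerpt and in \cite{Ram}.
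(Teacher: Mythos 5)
Your overall architecture matches the paper's: part (1) via Lemma~\ref{lemma:reqpairdirectproduct} for the direct-product groups plus direct verification for the rest, part (2) and part (3b) via counting bijections and Proposition~\ref{prop:requiredpairquotientgroup}, and part (3a) via a faithfulness dichotomy on the $C_{p^3}$ factor. The counting arguments you sketch for (2) and (3b) are essentially the paper's. However, there is a genuine gap at the heart of parts (1) and (3a): you never actually prove the reverse inclusion $\mathbb{Q}(\psi)\subseteq\mathbb{Q}(\chi)$ in the cases where $\psi$ has order $p^2$ or $p^3$ on $C_G(G')$. Your stated justification --- that $G'\not\subseteq\ker(\psi)$ together with $\mathbb{Q}(\chi)\neq\mathbb{Q}$ (Lemma~\ref{lemma:fieldextension}) ``forces $\mathbb{Q}(\psi)=\mathbb{Q}(\zeta_p)$ or $\mathbb{Q}(\zeta_{p^2})$, matching $\mathbb{Q}(\chi)$'' --- does not follow: it is perfectly consistent with those constraints that $\mathbb{Q}(\psi)=\mathbb{Q}(\zeta_{p^2})$ while $\mathbb{Q}(\chi)=\mathbb{Q}(\zeta_p)$, and you yourself point out in part (2) that exactly this failure occurs for $\Phi_3(221)a$ and $\Phi_3(2111)e$. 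So some argument must distinguish, say, $\Phi_3(221)b_r$ from $\Phi_3(221)a$; soft field-theoretic reasoning cannot do it. Likewise in (3a), restricting $\chi$ to $Z(G)=\langle\alpha_1^p\rangle$ only exhibits $\zeta_{p^2}\in\mathbb{Q}(\chi)$ (since $\psi(\alpha_1^p)=\zeta_{p^2}$), not $\zeta_{p^3}$; your claim that $\chi$ ``equals $p\psi$ on the relevant part of $Z(G)$-type elements'' does not reach the element $\alpha_1$, which is not central.

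The paper closes this gap by an explicit computation: writing $G=\bigcup_i\alpha^i C_G(G')$ and expanding $\psi^G(\alpha_1)=\sum_i\psi^{\circ}(\alpha^{-i}\alpha_1\alpha^i)$ using the commutator relations, one finds $\psi^G(\alpha_1)=\psi(\alpha_1)\sum_{n=1}^{p}\zeta_p^{(n-1)(n-2+2k)/2}$, a quadratic exponential sum which is shown to be nonzero by pairing equal terms (there are an odd number $p$ of terms, so not all can cancel). Hence $\psi^G(\alpha_1)=\theta\zeta_{p^2}$ (resp.\ $\theta\zeta_{p^3}$) with $0\neq\theta\in\mathbb{Q}(\zeta_p)$, which together with $\mathbb{Q}(\zeta_p)\subseteq\mathbb{Q}(\chi)$ yields $\mathbb{Q}(\chi)=\mathbb{Q}(\psi)$. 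Some nonvanishing argument of this kind is indispensable --- it is the step that draws the dividing line your last paragraph worries about --- and without it the proposal does not establish parts (1) and (3a).
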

\begin{proof}
	\begin{enumerate}
		\item For $G \in \{\Phi_3(2111)a, \Phi_3(2111)b_r ~ ( r=1, \nu), \Phi_3(1^5)\}$, $G$ is the direct product of a non-abelian group of order $p^4$ of nilpotency class $3$ and a cyclic group of order $p$ (see \cite[Subsection 4.5]{RJ}). Hence, from Lemma \ref{lemma:reqpairdirectproduct} and \cite[Theorem 56]{Ram}, it follows that $(C_G(G'), \psi)$ is a required pair to find an irreducible rational matrix representation of $G$ which affords the character $\Omega(\chi)$, where $\chi \in \nl(G)$ such that $\chi = \psi^G$ for some $\psi \in \Irr(C_G(G') | G')$.\\
		Now, let $G \in \{\Phi_3(311)a, \Phi_3(221)b_r ~ (r=1, \nu), \Phi_3(2111)c,  \Phi_3(2111)d\}$. Suppose $\chi = \psi^G \in \nl(G)$ for some $\psi \in \Irr(C_G(G') | G')$. If $G=\Phi_3(2111)d$, then $C_G(G')=\langle \alpha^p, \alpha_1, \alpha_2, \alpha_3 \rangle \cong C_p \times C_p \times C_p \times C_p$ and $\mathbb{Q}(\psi) = \mathbb{Q}(\zeta_p) = \mathbb{Q}(\chi$). If $G=\Phi_3(311)a$, then $C_G(G')=\langle \alpha^p, \alpha_1, \alpha_2 \rangle \cong C_{p^2} \times C_p \times C_p$. Suppose $\mathbb{Q}(\psi) = \mathbb{Q}(\zeta_p)$. Then $\mathbb{Q}(\psi) = \mathbb{Q}(\chi$). Next, if $\mathbb{Q}(\psi) = \mathbb{Q}(\zeta_{p^2})$, then $\psi(\alpha^p)=\zeta_{p^2}$. Assume that $G = \bigcup \alpha^iH$ $(0 \leq i \leq p-1)$. Then we have the following.
		\begin{align*}
			\psi^G(\alpha^p) & = \sum_{i=0}^{p-1}\psi^{\circ}(\alpha^{-i}\alpha^p \alpha^i), ~ \text{where} ~ {\psi^{\circ}}(g) = \begin{cases}
				\psi(g)  &\quad \text{ if } g\in C_G(G'),\\
				0  &\quad \text{ if } g \notin C_G(G')\\
			\end{cases}\\
			& = p \psi(\alpha^p)\\
			& =  p \zeta_{p^2}.
		\end{align*}
		 Hence, $\mathbb{Q}(\psi) = \mathbb{Q}(\zeta_{p^2}) = \mathbb{Q}(\chi$). Now, if $G=\Phi_3(2111)c$, then $C_G(G')=\langle \gamma, \alpha_1, \alpha_2 \rangle \cong C_{p^2} \times C_p \times C_p$. Suppose $\mathbb{Q}(\psi) = \mathbb{Q}(\zeta_p)$. Then $\mathbb{Q}(\psi) = \mathbb{Q}(\chi$). Next, if $\mathbb{Q}(\psi) = \mathbb{Q}(\zeta_{p^2})$, $\psi(\gamma)=\zeta_{p^2}$. Hence, by the similar discussion, we can check that $\mathbb{Q}(\psi) = \mathbb{Q}(\zeta_{p^2}) = \mathbb{Q}(\chi$). Again, if $G=\Phi_3(221)b_1$, $C_G(G')=\langle \alpha_1, \alpha_2, \alpha^p \rangle \cong C_{p^2} \times C_p \times C_p$.  Suppose $\mathbb{Q}(\psi) = \mathbb{Q}(\zeta_p)$. Then $\mathbb{Q}(\psi) = \mathbb{Q}(\chi$). Next, if $\mathbb{Q}(\psi) = \mathbb{Q}(\zeta_{p^2})$, then $\psi(\alpha_1)=\zeta_{p^2}$. Assume that $G = \bigcup \alpha^iC_G(G')$ $(0 \leq i \leq p-1)$. Then we have the following.
		 	\begin{align*}
		 	\psi^G(\alpha_1) & = \sum_{i=0}^{p-1}\psi^{\circ}(\alpha^{-i}\alpha_1\alpha^i), ~ \text{where} ~ {\psi^{\circ}}(g) = \begin{cases}
		 		\psi(g)  &\quad \text{ if } g\in C_G(G'),\\
		 		0  &\quad \text{ if } g \notin C_G(G')\\
		 	\end{cases}\\
		 	& = \psi(\alpha_1) + \psi(\alpha_1\alpha_2)+ \psi(\alpha_1^{1+p}\alpha_2^2)+\psi(\alpha_1^{1+3p}\alpha_2^3)+ \cdots + \psi(\alpha_1^{1+\frac{(p-1)(p-2)}{2}p}\alpha_2^{p-1})\\
		 	& = \psi(\alpha_1) [1 + \psi(\alpha_2)+ \psi(\alpha_1^{p}\alpha_2^2)+\psi(\alpha_1^{3p}\alpha_2^3)+ \cdots + \psi(\alpha_1^{\frac{(p-1)(p-2)}{2}p}\alpha_2^{p-1})]\\
		 	& = \psi(\alpha_1)\sum_{n=1}^{p} \psi(\alpha_1^{\frac{(n-1)(n-2)}{2}p}) \psi(\alpha_2^{n-1}). 
		 \end{align*}
		 It is observed that $\psi(\alpha_1^p)=\zeta_p$ and hence, $\psi(\alpha_2)=\zeta_p^k$ for some $k$ such that $0 \leq k \leq p-1$. Therefore, we have
		 $$\sum_{n=1}^{p} \psi(\alpha_1^{\frac{(n-1)(n-2)}{2}p}) \psi(\alpha_2^{n-1})= \sum_{n=1}^{p} \zeta_p^{\frac{(n-1)(n-2)}{2}} \zeta_p^{k(n-1)}=  \sum_{n=1}^{p} \zeta_p^{\frac{(n-1)(n-2+2k)}{2}}.$$
		 	In the right most sum of the above equality, observe that for $0 \leq k \leq 2$,  $\text{the~}(p-k)\text{-th term}=\text{the~}(3-k)\text{-th term}$, and for $3\leq k \leq p-1$, $\text{the~}(p-k)\text{-th term}=\text{the~}(p+3-k)\text{-th term}$. Hence, the above sum is not equal to zero as there are exactly $p$ terms.  Therefore, $\psi^G(\alpha_1)= \theta\zeta_{p^2}$ for some $0\neq \theta \in \mathbb{Q}(\zeta_p)$. Thus, $\mathbb{Q}(\psi) =\mathbb{Q}(\psi^G) = \mathbb{Q}(\zeta_{p^2})$. Next for $G =\Phi_3(221)b_\nu$, by using the similar discussion, we get the desired result.
		
		\item For $G \in \Phi_3$, we have $G/Z(G) \cong \Phi_2(1^3)$ (see \cite[Subsection 4.1]{RJ}). Therefore, $\nl(G/Z(G))|=p-1$. Moreover, for $G \in \{\Phi_3(221)a, \Phi_3(2111)e \}$, $Z(G)\cong C_p \times C_p$. Hence, from Lemma \ref{lemma:centernon-cyclic}, there is a bijection between the sets $\{\bar{\chi} \in \nl(G/K | Z(G)/K) : C_p \cong K < Z(G)\}$ and $\nl(G|Z(G))$, where $\bar{\chi}$ lifts to $\chi \in \nl(G)$. Therefore, the result follows.
		
		\item \begin{enumerate}
			\item Let $G=\Phi_3(311)b_1$, and let $\chi \in \nl(G)$. Further, suppose $\chi = \psi^G$ for some $\psi \in \Irr(C_G(G') | G')$ such that $\psi(\alpha_1)=\zeta_{p^3}$. Assume that $G = \bigcup \alpha^iC_G(G')$ $(0 \leq i \leq p-1)$. Then we have the following.
			\begin{align*}
				\psi^G(\alpha_1) & = \sum_{i=0}^{p-1}\psi^{\circ}(\alpha^{-i}\alpha_1\alpha^i), ~ \text{where} ~ {\psi^{\circ}}(g) = \begin{cases}
					\psi(g)  &\quad \text{ if } g\in C_G(G'),\\
					0  &\quad \text{ if } g \notin C_G(G')\\
				\end{cases}\\
				& = \psi(\alpha_1) + \psi(\alpha_1\alpha_2)+ \psi(\alpha_1^{1+p^2}\alpha_2^2)+\psi(\alpha_1^{1+3p^2}\alpha_2^3)+ \cdots + \psi(\alpha_1^{1+\frac{(p-1)(p-2)}{2}p^2}\alpha_2^{p-1})\\
				& = \psi(\alpha_1) [1 + \psi(\alpha_2)+ \psi(\alpha_1^{p^2}\alpha_2^2)+\psi(\alpha_1^{3p^2}\alpha_2^3)+ \cdots + \psi(\alpha_1^{\frac{(p-1)(p-2)}{2}p^2}\alpha_2^{p-1})]\\
				& = \psi(\alpha_1)\sum_{n=1}^{p} \psi(\alpha_1^{\frac{(n-1)(n-2)}{2}p^2}) \psi(\alpha_2^{n-1}). 
			\end{align*}
			We have $\psi(\alpha_1^{p^2})=\zeta_p$. This implies that $\psi(\alpha_2)=\zeta_p^k$ for some $k$ such that $0 \leq k \leq p-1$. Therefore,
			$$\sum_{n=1}^{p} \psi(\alpha_1^{\frac{(n-1)(n-2)}{2}p^2}) \psi(\alpha_2^{n-1})= \sum_{n=1}^{p} \zeta_p^{\frac{(n-1)(n-2)}{2}} \zeta_p^{k(n-1)}=  \sum_{n=1}^{p} \zeta_p^{\frac{(n-1)(n-2+2k)}{2}}.$$
				In the right most sum of the above equality, observe that for $0 \leq k \leq 2$, $\text{the~}(p-k)\text{-th term} = \text{the~}(3-k)\text{-th term}$, and for $3\leq k \leq p-1$, $\text{the~}(p-k)\text{-th term} = \text{the~}(p+3-k)\text{-th term}$. Hence, the above sum is not equal to zero as there are exactly $p$ terms. Therefore, $\psi^G(\alpha_1)= \theta\zeta_{p^3}$ for some $0\neq \theta \in \mathbb{Q}(\zeta_p)$. Thus, $\mathbb{Q}(\psi) =\mathbb{Q}(\psi^G) = \mathbb{Q}(\zeta_{p^3})$. Hence, $(C_G(G'), \psi)$ is a required pair to compute an irreducible rational matrix representation of $G$ that affords the character $\Omega(\chi)$. Next, for $G =\Phi_3(311)b_\nu$, by using the similar discussion, we get the desired result.
			\item Observe that $G/K$ is a VZ $p$-group of order $p^4$. Thus, $|\nl(G/K)|=p^2-p$. Furthermore, $|\{\chi \in\nl(G) : \mathbb{Q}(\chi) = \mathbb{Q}(\zeta_{p^3})\}|=p^3-p^2$. Hence, there is a bijection between the sets $\nl(G/K)$, where $K=\langle \alpha_3=\alpha_1^{p^2} \rangle <Z(G)$ and $\{\chi \in\nl(G) : \mathbb{Q}(\chi) \neq \mathbb{Q}(\zeta_{p^3})\}$. Therefore, the result follows.
		\end{enumerate}
	\end{enumerate}
	This completes the proof of Theorem \ref{thm:reqpairPhi3}.
\end{proof}

Theorem \ref{thm:reqpairPhi9} outlines rational matrix representations of groups of order $p^5$ in $\Phi_9$.
\begin{theorem}\label{thm:reqpairPhi9}
	Let $G$ be a group of order $p^5$ ($p\geq 5$) such that $G \in \Phi_9$. Consider $\chi \in \nl(G)$ such that $\chi = \psi^G$ for some $\psi \in \Irr(C_G(G') | G')$. Then $(C_G(G'), \psi)$ is a required pair to construct an irreducible rational matrix representation of $G$ that affords the character $\Omega(\chi)$.
\end{theorem}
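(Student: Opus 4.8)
The plan is to verify the two defining properties of a required pair in Algorithm~\ref{algorithm}. Put $C=C_G(G')$; by Lemma~\ref{lemma:uniqueabelianPhi3Phi9}, $C$ is abelian of index $p$, hence normal in $G$, and $\psi\in\lin(C)$. Thus the hypothesis $\psi^G=\chi$ already supplies the first condition, and the whole problem is to prove $\mathbb{Q}(\psi)=\mathbb{Q}(\chi)$. One inclusion is automatic: since $\chi=\psi^G$, every value of $\chi$ is a sum of values of $\psi$, so $\mathbb{Q}(\chi)\subseteq\mathbb{Q}(\psi)$.

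For the reverse inclusion I would pass to Galois actions. By the proof of Lemma~\ref{lemma:characterPhi3Phi9} we have $I_G(\psi)=C$, so $\psi^G$ is irreducible; since induction commutes with field automorphisms, for $\sigma\in\gal(\mathbb{Q}(\psi)/\mathbb{Q})$ we get $\chi^\sigma=(\psi^\sigma)^G$, and by Mackey's formula $(\psi^\sigma)^G=\psi^G$ iff $\psi^\sigma$ is $G$-conjugate to $\psi$. Fix $x\in G\setminus C$. As $C$ is abelian it normalises $\ker\psi$, so $N_G(\ker\psi)\in\{C,G\}$; and since Galois conjugation preserves kernels, $\ker(\psi^x)=(\ker\psi)^x$, and two linear characters of the cyclic group $C/\ker\psi$ with the same kernel are Galois conjugate, $\psi^\sigma$ is $G$-conjugate to $\psi$ for some $\sigma\neq 1$ exactly when $(\ker\psi)^x=\ker\psi$. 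Hence, if $\ker\psi$ is not normal in $G$, the identity is the only automorphism fixing $\chi$, so $\mathbb{Q}(\chi)=\mathbb{Q}(\psi)$; it therefore suffices to show that no $\psi\in\Irr(C\mid G')$ has $\ker\psi\trianglelefteq G$.

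Suppose $\ker\psi\trianglelefteq G$ and set $p^a=[C:\ker\psi]$. Then $G/\ker\psi$ is non-abelian (since $G'\nsubseteq\ker\psi$) of order $p^{a+1}$ with a cyclic subgroup of index $p$, so for odd $p$ it is the modular group $M_{p^{a+1}}$ and $a\ge 2$; matching $|(M_{p^{a+1}})'|=p$ against $(G/\ker\psi)'\cong G'/(G'\cap\ker\psi)$ forces $|G'\cap\ker\psi|=p^2$, and then $|\ker\psi|=p^{4-a}\ge p^2$ gives $a=2$ and $\ker\psi\subseteq G'$. Now use the maximal-class structure of $G\in\Phi_9$ (see \cite{RJ}): $Z(G)=\gamma_4(G)$ has order $p$, $|\gamma_2(G)|=p^3$, $|\gamma_3(G)|=p^2$, and $C=C_G(\gamma_2/\gamma_4)$ acts on $\gamma_2/\gamma_4\cong\mathbb{F}_p^2$ through a single nontrivial unipotent $C_p$. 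Since a $G$-normal subgroup of order $p^2$ in $\gamma_2$ contains the minimal normal subgroup $\gamma_4$ and hence corresponds to the unique invariant line $\gamma_3/\gamma_4$, we get $\ker\psi=\gamma_3$, so $C/\gamma_3$ is cyclic of order $p^2$. But the $p$-th power subgroup $C^p$ lies in $\gamma_2$ (as $C/\gamma_2\cong C_p$), is characteristic in $C$ hence $G$-normal, and has order at most $p^2$ ($C$ is not cyclic, else $G\cong M_{p^5}$ has class $2$); by the uniqueness just used, $|C^p|=p^2$ would force $C^p=\gamma_3$, while $|C^p|\le p$ forces $C^p\subseteq\gamma_4\subseteq\gamma_3$. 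Either way $C^p\subseteq\gamma_3$, so $C/\gamma_3$ is elementary abelian, contradicting that it is cyclic of order $p^2$. Hence $\ker\psi$ is not normal in $G$, which completes the proof.

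I expect the structural argument of the last paragraph to be the main obstacle: showing that a hypothetical normal $\ker\psi$ is forced to equal $\gamma_3(G)$, and that this clashes with the power structure of the abelian maximal subgroup $C$ via $C^p\subseteq\gamma_3(G)$. A more computational alternative, parallel to the proofs of Theorem~\ref{thm:reqpairPhi3}(1) and (3), is to fix James's presentation of each group in $\Phi_9$, evaluate $\psi^G(g)=\psi(g)\sum_{i=0}^{p-1}\psi([g,x^i])$ at a generator $g$ of $C$ of maximal $\psi$-order, expand $[g,x^i]$ by commutator collection (legitimate since $p\ge 5$ exceeds the nilpotency class), and check that the resulting Gauss-type sum is nonzero; on that route the uniform nonvanishing for $p\ge 5$ would be the main point.
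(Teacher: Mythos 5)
Your proof is correct, but it takes a genuinely different route from the paper's. The paper argues case by case on the isomorphism type of $C_G(G')$: for $\Phi_9(2111)a$ and $\Phi_9(1^5)$ one has $C_G(G')\cong C_p^{4}$, so $\mathbb{Q}(\psi)=\mathbb{Q}(\zeta_p)$ and Lemma~\ref{lemma:fieldextension} already forces $\mathbb{Q}(\chi)=\mathbb{Q}(\zeta_p)=\mathbb{Q}(\psi)$; for $\Phi_9(2111)b_r$, where $\mathbb{Q}(\psi)$ may equal $\mathbb{Q}(\zeta_{p^2})$, the paper evaluates $\psi^G(\alpha_1)$ explicitly from James's presentation, obtaining $\psi(\alpha_1)$ times a sum of $p$ $p$-th roots of unity that it asserts is nonzero, whence $\zeta_{p^2}\in\mathbb{Q}(\chi)$ --- this is precisely the ``computational alternative'' you sketch in your final paragraph. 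Your main argument instead reduces $\mathbb{Q}(\chi)=\mathbb{Q}(\psi)$ to the non-normality of $\ker\psi$ in $G$, using Clifford theory for the normal abelian subgroup $C=C_G(G')$ together with the fact that linear characters of a cyclic quotient with equal kernels are Galois conjugate, and then rules out normality structurally: a normal kernel would have to be $\gamma_3(G)$, the unique $G$-invariant subgroup of order $p^2$ inside $G'$, while $C^p\subseteq\gamma_3(G)$ makes $C/\gamma_3(G)$ elementary abelian rather than cyclic of order $p^2$. I checked the individual steps (the identification $|(G/\ker\psi)'|=p$ from the cyclic index-$p$ subgroup, the resulting $|\ker\psi|=p^2$ with $\ker\psi\subseteq G'$, the uniqueness of the invariant line in $\gamma_2(G)/\gamma_4(G)$ under the nontrivial unipotent action of $G/C$, and $|C^p|\le p^2$ because $C$ is non-cyclic) and they all hold. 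What your approach buys is uniformity over all of $\Phi_9$, independence from the particular presentations, and avoidance of the nonvanishing claim for the Gauss-type sum on which the paper's computation rests; what the paper's approach buys is explicitness and a treatment exactly parallel to its arguments for $\Phi_3(221)b_r$ and $\Phi_3(311)b_r$.
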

\begin{proof}
	If $G = \Phi_9(2111)a \, \, \text{or} \, \, \Phi_9(1^5)$, then $C_G(G') = \langle \alpha_1, \alpha_2, \alpha_3, \alpha_4 \rangle \cong C_p \times C_p \times C_p \times C_p$ (see \cite[Subsection 4.5]{RJ}). Thus, $\mathbb{Q}(\psi)=\mathbb{Q}(\zeta_p)$.  Observe that $\mathbb{Q}(\chi)=\mathbb{Q}(\psi^G) \subseteq \mathbb{Q}(\psi)$. From Lemma \ref{lemma:fieldextension}, we get $\mathbb{Q}(\chi) =\mathbb{Q}(\psi) = \mathbb{Q}(\zeta_p)$. Further, consider 
	\begin{equation*}
		G = \Phi_9(2111)b_r= \langle \alpha, \alpha_1, \dots, \alpha_4 : [\alpha_i, \alpha]=\alpha_{i+1}, \alpha_1^p=\alpha_4^k,  \alpha^p=\alpha_{i+1}^p=1 \, \, (i=1, 2, 3)\rangle,
	\end{equation*}
	where $k=g^r$ for $r= 1, 2, \cdots, (p-1, 3)$ (see \cite[Subsection 4.5]{RJ}). Here, $C_G(G') = \langle \alpha_1, \alpha_2, \alpha_3 \rangle \cong C_{p^2} \times C_p \times C_p $ and $\mathbb{Q}(\psi) = \mathbb{Q}(\zeta_p)$ or $\mathbb{Q}(\zeta_{p^2})$. If $\mathbb{Q}(\psi) = \mathbb{Q}(\zeta_p)$, then from Lemma \ref{lemma:fieldextension}, $\mathbb{Q}(\psi) = \mathbb{Q}(\chi) = \mathbb{Q}(\zeta_p)$. \\
	Now, suppose $\mathbb{Q}(\psi) = \mathbb{Q}(\zeta_{p^2})$. This implies that $\psi(\alpha_1) = \zeta_{p^2}$, $\psi(\alpha_2)$ and $\psi(\alpha_3)$ are some $p$-th roots of unity. Assume that $G = \bigcup \alpha^iC_G(G')$ $(0 \leq i \leq p-1)$. Then
	\begin{align*}
		\psi^G(\alpha_1) & = \sum_{i=0}^{p-1}\psi^{\circ}(\alpha^{-i}\alpha_1\alpha^i), ~ \text{where} ~ {\psi^{\circ}}(g) = \begin{cases}
			\psi(g)  &\quad \text{ if } g\in C_G(G'),\\
			0  &\quad \text{ if } g \notin C_G(G')\\
		\end{cases}\\
		& = \psi(\alpha_1) + \psi(\alpha_1\alpha_2)+ \psi(\alpha_1\alpha_2^2\alpha_3)+\psi(\alpha_1\alpha_2^3\alpha_3^{1+2}\alpha_4)+\psi(\alpha_1\alpha_2^4\alpha_3^{1+2+3}\alpha_4^{1+(1+2)})+ \cdots \\
		& = \psi(\alpha_1) [1 + \psi(\alpha_2)+ \psi(\alpha_2^2)\psi(\alpha_3)+\psi(\alpha_2^3)\psi(\alpha_3^{1+2})\psi(\alpha_4)+\psi(\alpha_2^4)\psi(\alpha_3^{1+2+3})\psi(\alpha_4^{1+(1+2)})+ \cdots]\\
		& = \psi(\alpha_1)\sum_{n=1}^{p}\bigg[ \psi(\alpha_2^{n-1})\psi(\alpha_3^{\frac{(n-1)(n-2)}{2}})\psi(\alpha_4^{\frac{(n-1)(n-2)(n-3)}{6}})\bigg]\\ 
		& = \theta\zeta_{p^2}, ~ \text{for some} ~ 0\neq \theta \in \mathbb{Q}(\zeta_p).
	\end{align*}
	Therefore, $\mathbb{Q}(\psi) =\mathbb{Q}(\psi^G) = \mathbb{Q}(\zeta_{p^2})$. Hence, $(C_G(G'),\psi)$ is a required pair. This completes the proof of Theorem \ref{thm:reqpairPhi9}.
\end{proof}

\subsection{Groups belonging to $\Phi_7$, $\Phi_8$ and $\Phi_{10}$}
In this subsection, we address the groups of the isoclinic families $\Phi_7$, $\Phi_8$ and $\Phi_{10}$ together. Lemma \ref{lemma:characterPhi7Phi8Phi10} delineates the techniques for deriving all non-linear irreducible complex characters of groups belonging to $\Phi_7$, $\Phi_8$ and $\Phi_{10}$.
\begin{lemma}\label{lemma:characterPhi7Phi8Phi10}
	Let $G$ be a non-abelian $p$-group of order $p^5$ such that $G\in \Phi_7\cup  \Phi_8 \cup \Phi_{10}$. Then we have the following.
	\begin{enumerate}
		\item $\cd(G)=\{1, p, p^2\}$.
		\item There is a bijection between the sets $\Irr^{(p)}(G)$ and $\nl(G/Z(G))$. In other words, each non-linear irreducible complex character of degree $p$ of $G$ is the lift of some non-linear irreducible complex character of $G/Z(G)$.
		\item The pair $(G, Z(G))$ is a Camina pair. In fact, there is a bijection between the sets $\Irr^{(p^2)}(G)$ and $\Irr(Z(G)) \setminus \{1_{Z(G)}\}$. Moreover, for $1_{Z(G)} \neq \mu \in \Irr(Z(G))$, the corresponding $\chi_\mu \in \Irr^{(p^2)}(G)$ is defined as follows:
		\begin{equation}\label{Camina}
			\chi_\mu(g) = \begin{cases}
				p^2\mu(g)  &\quad \text{ if } g \in Z(G),\\
				0            &\quad \text{ otherwise. }
			\end{cases}
		\end{equation}
	\end{enumerate}
\end{lemma}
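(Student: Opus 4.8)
The plan is to read the structural invariants of these families off James' classification, establish part~(3) (the Camina property), and then deduce parts~(1) and~(2) from it together with an elementary kernel argument. From \cite[Subsections~4.1 and~4.5]{RJ} one has, for $G \in \Phi_7 \cup \Phi_8 \cup \Phi_{10}$ of order $p^5$, that $|Z(G)| = p$ and $|G'| = p^3$; in particular $G$ is non-abelian with $Z(G)$ of prime order, so $Z(G) = Z(G) \cap G' \subseteq G'$, and $|G'| > p$ forces nilpotency class at least $3$, whence $G/Z(G)$ is non-abelian. James also records $\cd(G) = \{1,p,p^2\}$, which is part~(1); if one prefers a self-contained derivation, the inclusion $\cd(G) \subseteq \{1,p,p^2\}$ is immediate from $\chi(1)^2 \le [G:Z(\chi)] \le [G:Z(G)] = p^4$ (where $Z(\chi)$ is the centre of the character $\chi$), and the fact that both $p$ and $p^2$ actually occur falls out of the argument below.

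For part~(3) I would first show that $(G,Z(G))$ is a Camina pair; by the criterion recalled after the definition of Camina pair, it is enough to prove that every $\chi \in \Irr(G \mid Z(G))$ vanishes on $G\setminus Z(G)$. Since $Z(G)\subseteq G'$, such a $\chi$ is non-linear, so $\chi(1) \in \{p,p^2\}$. If $\chi(1)=p^2$, then $p^4 = \chi(1)^2 \le [G:Z(\chi)] \le [G:Z(G)] = p^4$, so equality holds throughout: $Z(\chi)=Z(G)$, the character $\chi$ is fully ramified over $Z(G)$ and hence vanishes on $G\setminus Z(G)$ (the standard equality criterion for $\chi(1)^2 \le [G:Z(\chi)]$), and $\chi\downarrow_{Z(G)} = p^2\mu$ for some non-trivial $\mu\in\Irr(Z(G))$. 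The remaining point is to rule out a $\chi\in\Irr(G\mid Z(G))$ of degree $p$: such a $\chi$ would satisfy $\ker(\chi)\cap Z(G)=1$, and since every non-trivial normal subgroup of a $p$-group meets the centre, $\ker(\chi)=1$, i.e.\ $G$ would possess a faithful irreducible character of degree $p$; this does not happen for these three families --- which is precisely where $\Phi_7,\Phi_8,\Phi_{10}$ differ from $\Phi_9$ --- and can be extracted from the analysis of the non-linear irreducible characters of groups of order $p^5$ in \cite{SKP}, or verified directly on the presentations in \cite[Subsection~4.5]{RJ}. Granting this, $(G,Z(G))$ is a Camina pair, so Lemma~\ref{lemma:Caminacharacter} supplies the bijection $\mu\mapsto\chi_\mu$ between $\Irr(Z(G))\setminus\{1_{Z(G)}\}$ and $\Irr(G\mid Z(G))$, with $\chi_\mu(1)=|G/Z(G)|^{1/2}=p^2$ and the formula~\eqref{Camina}; together with the first step, which shows that every degree-$p^2$ character already lies in $\Irr(G\mid Z(G))$, this gives $\Irr^{(p^2)}(G)=\Irr(G\mid Z(G))$, which is part~(3).

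For part~(2): if $\chi\in\Irr^{(p)}(G)$, then $Z(G)\subseteq\ker(\chi)$ (otherwise $\chi$ would lie in $\Irr(G\mid Z(G))$ and have degree $p^2$ by part~(3)), so $\chi$ is the lift of some $\bar\chi\in\Irr(G/Z(G))$ of degree $p$; conversely, $|G/Z(G)|=p^4$ forces $\cd(G/Z(G))\subseteq\{1,p\}$, so every $\bar\chi\in\nl(G/Z(G))$ has degree $p$ and lifts to a degree-$p$ irreducible character of $G$ --- this is the asserted bijection. Since $G/Z(G)$ is non-abelian, $\nl(G/Z(G))\neq\emptyset$, hence $p\in\cd(G)$, and with $1,p^2\in\cd(G)$ we get $\cd(G)=\{1,p,p^2\}$, completing part~(1). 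The one non-formal ingredient is the non-existence of a faithful irreducible character of degree $p$ for $\Phi_7$, $\Phi_8$, $\Phi_{10}$ --- equivalently, the Camina property of $(G,Z(G))$ --- which does not follow from the degree arithmetic alone, since $\Phi_9$ (with $|Z(G)|=p$ but $\cd(G)=\{1,p\}$) shows the claim genuinely uses the finer structure of these families; it must therefore be obtained either by invoking \cite{SKP} or by a short family-by-family inspection of James' presentations.
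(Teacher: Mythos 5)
Your proposal is correct in its logical structure, but it takes a genuinely different route from the paper, whose entire proof of this lemma is a citation of \cite[Lemmas 5.5, 5.6 and 5.8]{SKP} together with Lemma \ref{lemma:Caminacharacter}. You instead derive parts (1) and (2), and most of part (3), from general character theory: the fully-ramified argument $\chi(1)^2 \le [G:Z(\chi)] \le [G:Z(G)] = p^4$ handles the degree-$p^2$ characters, the observation that $\ker(\chi)\cap Z(G)=1$ forces $\ker(\chi)=1$ handles the degree-$p$ case, and the lifting argument for part (2) is clean. This reduces the whole lemma to a single family-specific fact --- that groups in $\Phi_7\cup\Phi_8\cup\Phi_{10}$ admit no faithful irreducible character of degree $p$ --- which you correctly identify as the only non-formal ingredient (and correctly contrast with $\Phi_9$, where such characters do exist). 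What your approach buys is transparency about exactly where the classification enters; what it does not do is actually establish that one fact, which you defer to \cite{SKP} or to an unperformed family-by-family inspection. Since the paper defers the entire lemma to the same reference, this is an acceptable level of rigor here, but strictly speaking your proof is only complete modulo that verification. One factual slip: $|G'|=p^3$ holds only for $\Phi_{10}$; for $\Phi_7$ and $\Phi_8$ one has $|G'|=p^2$ (e.g.\ $G'=\langle\alpha_1^p\rangle\cong C_{p^2}$ for $\Phi_8(32)$). This does not damage your argument, since all you use is $|G'|>|Z(G)|=p$ to conclude $Z(G)\subseteq G'$ and nilpotency class at least $3$, but the claim as stated is wrong for two of the three families.
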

\begin{proof}
	These assertions follow directly from \cite[Lemma 5.5, Lemma 5.6 and Lemma 5.8]{SKP} and Lemma \ref{lemma:Caminacharacter}.
\end{proof}

\begin{remark}
	\textnormal{Let $G$ be a group of order $p^5$ in $\Phi_7 \cup \Phi_8 \cup \Phi_{10}$, and let $\chi \in \Irr^{(p)}(G)$. By Lemma \ref{lemma:characterPhi7Phi8Phi10}, $\chi$ is the lift of some $\bar{\chi} \in \Irr(G/Z(G))$. Moreover, $G/Z(G) \cong \Phi_2(1^4)$ for $G \in \Phi_7$, $G/Z(G) \cong \Phi_2(22)$ for $G \in \Phi_8$, and $G/Z(G) \cong \Phi_3(1^4)$ for $G \in \Phi_{10}$ (see \cite[Subsection 4.1]{RJ}). Thus, a required pair for constructing an irreducible rational matrix representation of $G$ affording the character $\Omega(\chi)$ can be determined using Proposition \ref{prop:requiredpairquotientgroup} along with \cite[Theorems 49 and 58]{Ram}.}
\end{remark}

Next, let $G$ be a $p$-group such that $(G, Z(G))$ forms a Camina pair. Suppose $(H, \psi_{\mu})$ is a required pair for constructing an irreducible rational matrix representation of $G$ affording the character $\Omega(\chi_\mu)$, where $\chi_\mu \in \Irr(G | Z(G))$ is defined in \eqref{Caminacharacter}. Here, we establish results characterizing $(H, \psi_{\mu})$, analogous to those for required pairs in irreducible rational matrix representations of VZ $p$-groups (see \cite{Ram}).  

\begin{lemma} \label{lem:Caminareqpair}
	Let $G$ be a finite $p$-group such that $(G, Z(G))$ forms a Camina pair. Suppose $H$ is a subgroup of $G$ of index $|G/Z(G)|^{\frac{1}{2}}$ and $\psi \in \lin(H)$. Then $\psi^G = \chi_\mu \in \Irr(G|Z(G))$ (as defined in \eqref{Caminacharacter}) if and only if $Z(G) \subseteq H$ and $\psi \downarrow_{Z(G)} = \mu$ with $\mu \in \Irr(Z(G)) \setminus \{1_{Z(G)}\}$.
\end{lemma}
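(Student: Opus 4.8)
The plan is to establish the two implications separately, in both directions working directly from the induced-character formula
\[
\psi^G(g) = \frac{1}{|H|}\sum_{\substack{x \in G \\ x^{-1}gx \in H}} \psi(x^{-1}gx),
\]
and exploiting that every element of $Z(G)$ is fixed by conjugation, so that this sum collapses on $Z(G)$. Throughout, note that the hypothesis that $H$ has index $|G/Z(G)|^{\frac{1}{2}}$ already carries the usual arithmetic of a Camina pair: $[G:Z(G)] = ([G:H])^2$.

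\emph{Forward implication.} Suppose $\psi^G = \chi_\mu$ with $\chi_\mu \in \Irr(G\mid Z(G))$ as in \eqref{Caminacharacter}. Fix $g \in Z(G)$; since $x^{-1}gx = g$ for all $x$, the formula gives $\psi^G(g) = [G:H]\,\psi(g)$ when $g \in H$ and $\psi^G(g) = 0$ when $g \notin H$. But $\chi_\mu(g) = |G/Z(G)|^{\frac{1}{2}}\mu(g)$ and $\mu(g)$ is a root of unity, hence never zero, so comparing with $\psi^G(g) = \chi_\mu(g)$ forces $g \in H$. As $g$ was an arbitrary element of $Z(G)$, we get $Z(G) \subseteq H$; and then $[G:H]\psi(g) = [G:H]\mu(g)$ yields $\psi(g) = \mu(g)$ for all $g \in Z(G)$, i.e. $\psi\downarrow_{Z(G)} = \mu$. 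Finally $\mu \neq 1_{Z(G)}$ because $\ker(\chi_\mu) \cap Z(G) = \ker(\mu)$, while $\chi_\mu \in \Irr(G\mid Z(G))$ means $Z(G) \not\subseteq \ker(\chi_\mu)$.

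\emph{Backward implication.} Now assume $Z(G) \subseteq H$ and $\psi\downarrow_{Z(G)} = \mu$ with $\mu$ nontrivial. Since $\psi$ is linear and $[G:H] = |G/Z(G)|^{\frac{1}{2}}$, we have $\psi^G(1) = |G/Z(G)|^{\frac{1}{2}} = \chi_\mu(1)$. The same collapse of the induced-character formula on $Z(G) \subseteq H$ gives $\psi^G\downarrow_{Z(G)} = |G/Z(G)|^{\frac{1}{2}}\mu$. Decompose $\psi^G$ into irreducible constituents with multiplicity; as $Z(G)$ is central, each constituent $\theta$ restricts to $Z(G)$ as $\theta(1)$ times a single linear character of $Z(G)$, and since these restrictions sum to $|G/Z(G)|^{\frac{1}{2}}\mu$ (a scalar multiple of the one character $\mu$), every one of them must be a multiple of $\mu$. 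Hence each $\theta$ lies over the nontrivial character $\mu$, so $Z(G) \not\subseteq \ker(\theta)$, i.e. $\theta \in \Irr(G\mid Z(G))$. By Lemma \ref{lemma:Caminacharacter} — together with the fact (from the Camina hypothesis) that every member of $\Irr(G\mid Z(G))$ vanishes off $Z(G)$ and is therefore determined by the central character it lies over — each constituent equals $\chi_\mu$. Thus $\psi^G = k\chi_\mu$, and comparing degrees gives $k = 1$, so $\psi^G = \chi_\mu$.

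\emph{Where the work is.} The only nonformal point is the last step of the backward direction: passing from ``every constituent of $\psi^G$ lies over $\mu$'' to ``every constituent is $\chi_\mu$''. This is exactly where the Camina-pair hypothesis (via Lemma \ref{lemma:Caminacharacter}) is essential, since in a general $p$-group many irreducibles can lie over the same central linear character. A more self-contained alternative is to prove $\langle \psi^G, \psi^G\rangle = 1$ directly: the values already computed on $Z(G)$ give $\langle \psi^G, \psi^G\rangle \geq \frac{|Z(G)|\,[G:H]^2}{|G|} = 1$, and one then uses the Camina property — every $g \notin Z(G)$ is conjugate to all of $gZ(G)$ — to force $\psi^G$ to vanish off $Z(G)$ and hence equality to hold; irreducibility together with $\psi^G\downarrow_{Z(G)} = |G/Z(G)|^{\frac{1}{2}}\mu$ then identifies $\psi^G$ with $\chi_\mu$. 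I would present the route via Lemma \ref{lemma:Caminacharacter} as the cleaner one.
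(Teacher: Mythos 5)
Your proof is correct. The forward implication is essentially identical to the paper's: both collapse the induced-character formula on the central subgroup and compare with the nonvanishing values of $\chi_\mu$ to force $Z(G)\subseteq H$ and $\psi\downarrow_{Z(G)}=\mu$. The backward implication is where you diverge. The paper argues by contradiction: if $\psi^G\notin\Irr(G\mid Z(G))$ then (implicitly, because every member of $\Irr(G\mid Z(G))$ has degree $|G/Z(G)|^{1/2}=\psi^G(1)$, so no such character can be a proper constituent) every constituent of $\psi^G$ contains $Z(G)$ in its kernel, whence $Z(G)\subseteq\ker(\psi^G)=\Core_G(\ker\psi)$, contradicting $Z(G)\not\subseteq\ker\psi$; the identification of the resulting irreducible with $\chi_\mu$ is then left tacit. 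You instead decompose $\psi^G$ directly, use the fact that central elements act by scalars to see that every constituent lies over $\mu$, and invoke the bijection of Lemma \ref{lemma:Caminacharacter} to conclude each constituent is $\chi_\mu$, finishing by a degree count. Your route has the advantage of making explicit the final step (why the irreducible obtained is $\chi_\mu$ and not some other member of $\Irr(G\mid Z(G))$), which the paper's write-up glosses over, and it avoids the core-of-the-kernel argument entirely; the paper's route is shorter once one accepts the degree observation. Your closing remark about the inner-product alternative is also sound, though as you note it still needs the constituents-over-$\mu$ observation to see that $\psi^G$ vanishes off $Z(G)$, so it is not really more self-contained.
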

\begin{proof}
	Let $\psi \in \lin(H)$ such that $\psi^G = \chi_\mu$. Let $T$ be a set of right coset representatives of $H$ in $G$. Then for $g\in G$, we have $\psi^G(g) = \sum_{g_i\in T} {{\psi}^{\circ}}(g_igg_i^{-1})$, where ${\psi}^{\circ}$ is defined by  $\psi^\circ(x)=\psi(x)$ if $x \in H$ and  $\psi^{\circ}(x)=0$ if $x\notin H$. Now, for $z\in Z(G)$, we get $\psi^G(z) = |G/Z(G)|^{\frac{1}{2}}{\psi}^\circ(z)$ and since $\psi^G = \chi_\mu$, we obtain ${\psi}^\circ(z) = \mu(z)=\psi(z)$. This implies that $Z(G) \subseteq H$ and $\psi \downarrow_{Z(G)} = \mu$.\\
	Conversely, assume $H$ is a subgroup of $G$ with index $|G/Z(G)|^{\frac{1}{2}}$, $\psi \in \lin(H)$ and $Z(G) \subseteq H$ with $\psi \downarrow_{Z(G)} = \mu$, where $\mu \in \Irr(Z(G)) \setminus \{1_{Z(G)}\}$. We claim that $\psi^G \in \Irr(G|Z(G))$. Suppose to the contrary, that $\psi^G \notin \nl(G|Z(G))$, then $\psi^G$ must be a sum of some irreducible complex character(s) of $G$. This implies that $Z(G) \subseteq \ker(\psi^G)$.  On the other hand, $\psi \downarrow_{Z(G)} = \mu$, where $\mu \in \Irr(Z(G)) \setminus \{1_{Z(G)}\}$. Therefore, $Z(G) \not\subseteq \ker(\psi^G)$ as $\ker(\psi^G) = \Core_G(\ker(\psi))$ (where $\Core_G(\ker(\psi))$ is the normal core of $\ker(\psi)$ in $G$), and $Z(G) \not\subseteq \ker(\psi)$. This is a contradiction. Hence, the claim follows. This completes the proof of Lemma \ref{lem:Caminareqpair}.
\end{proof}

\begin{lemma}\label{lem:fieldofcharacter}
	Let $G$ be a finite $p$-group such that $(G, Z(G))$ forms a Camina pair, and let $\chi_\mu \in \Irr(G|Z(G))$ (as defined in \eqref{Caminacharacter}). Consider a subgroup $H$ of $G$ with index $|G/Z(G)|^{\frac{1}{2}}$ and $\psi_\mu \in \lin(H)$ such that $\psi_\mu^G = \chi_\mu$. Then $\mathbb{Q}(\psi_\mu) = \mathbb{Q}(\chi_\mu)$ if and only if $|\ker(\psi_\mu)/\ker(\mu)|=|G/Z(G)|^{\frac{1}{2}}$.
\end{lemma}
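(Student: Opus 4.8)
The plan is to reduce the statement to a comparison of cyclotomic fields together with an elementary index count, taking Lemma~\ref{lem:Caminareqpair} as the starting point. First I would record the structural consequences of the hypothesis $\psi_\mu^G = \chi_\mu$: by Lemma~\ref{lem:Caminareqpair} we have $Z(G) \subseteq H$ and $\psi_\mu \downarrow_{Z(G)} = \mu$, so $\ker(\psi_\mu) \cap Z(G) = \ker(\mu)$; in particular $\ker(\mu) \leq \ker(\psi_\mu)$ and the quotient $\ker(\psi_\mu)/\ker(\mu)$ is meaningful. Put $d = [H : \ker(\psi_\mu)]$ and $e = [Z(G) : \ker(\mu)]$, the orders of the linear characters $\psi_\mu$ and $\mu$. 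Since $G$ is a $p$-group, $H/\ker(\psi_\mu)$ and $Z(G)/\ker(\mu)$ are cyclic $p$-groups, so $d$ and $e$ are powers of $p$, with $e \geq p$ because $\mu \neq 1_{Z(G)}$; and the standard description of the field of values of a linear character gives $\mathbb{Q}(\psi_\mu) = \mathbb{Q}(\zeta_d)$ and $\mathbb{Q}(\mu) = \mathbb{Q}(\zeta_e)$.

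Next I would identify $\mathbb{Q}(\chi_\mu)$ with $\mathbb{Q}(\mu)$. From \eqref{Caminacharacter} every value of $\chi_\mu$ is either $0$ or $|G/Z(G)|^{\frac{1}{2}}\mu(g)$, and $|G/Z(G)|^{\frac{1}{2}} = \chi_\mu(1)$ is a (rational) integer, so $\mathbb{Q}(\chi_\mu) \subseteq \mathbb{Q}(\mu)$; dividing back by $|G/Z(G)|^{\frac{1}{2}}$ on $Z(G)$ gives the reverse inclusion, whence $\mathbb{Q}(\chi_\mu) = \mathbb{Q}(\mu) = \mathbb{Q}(\zeta_e)$. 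Therefore the condition $\mathbb{Q}(\psi_\mu) = \mathbb{Q}(\chi_\mu)$ is equivalent to $\mathbb{Q}(\zeta_d) = \mathbb{Q}(\zeta_e)$; since $d$ and $e$ are powers of $p$ with $e \geq p$, this holds if and only if $d = e$ (and it then forces $\psi_\mu$ to be nontrivial, consistent with $\mathbb{Q}(\chi_\mu) \neq \mathbb{Q}$).

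Finally I would carry out the index bookkeeping. Using $Z(G) \subseteq H$, $[G : H] = |G/Z(G)|^{\frac{1}{2}}$ and $[G : Z(G)] = |G/Z(G)|$ we obtain $[H : Z(G)] = |G/Z(G)|^{\frac{1}{2}}$, and hence
\[
|\ker(\psi_\mu)/\ker(\mu)| = \frac{|H|/d}{|Z(G)|/e} = [H : Z(G)]\cdot\frac{e}{d} = |G/Z(G)|^{\frac{1}{2}}\cdot\frac{e}{d}.
\]
Thus $|\ker(\psi_\mu)/\ker(\mu)| = |G/Z(G)|^{\frac{1}{2}}$ if and only if $d = e$, which by the previous paragraph is exactly $\mathbb{Q}(\psi_\mu) = \mathbb{Q}(\chi_\mu)$; combining the two equivalences proves the lemma.

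There is no serious obstacle here: the whole content is the fact that, for $a,b \geq 1$, $\mathbb{Q}(\zeta_{p^a}) = \mathbb{Q}(\zeta_{p^b})$ forces $a = b$, coupled with the order ratio $e/d$. The only points that need a line of care are confirming that $|G/Z(G)|^{\frac{1}{2}}$ is a (rational) integer — it is the degree $\chi_\mu(1)$, hence a power of $p$ — and making sure the trivial character never enters the field comparison, which is guaranteed by the hypothesis $\mu \neq 1_{Z(G)}$.
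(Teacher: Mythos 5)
Your proposal is correct and follows essentially the same route as the paper: identify $\mathbb{Q}(\chi_\mu)=\mathbb{Q}(\mu)$, translate the field equality into equality of the orders $[H:\ker(\psi_\mu)]$ and $[Z(G):\ker(\mu)]$ of the two linear characters (using that distinct $p$-power cyclotomic fields differ), and finish with the index computation via $[H:Z(G)]=|G/Z(G)|^{1/2}$. Your write-up is in fact slightly more careful than the paper's, spelling out why $e\geq p$ rules out the trivial-conductor degeneracy.
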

\begin{proof}
	By Lemma \ref{lem:Caminareqpair}, $\psi_{\mu} \downarrow_{Z(G)} = \mu$ with $\mu \in \Irr(Z(G)) \setminus \{1_{Z(G)}\}$ and $\mathbb{Q}(\chi_\mu)=\mathbb{Q}(\mu)$. 
	Observe that
	\begin{align*}
		\mathbb{Q}(\psi_\mu) = \mathbb{Q}(\chi_\mu)=\mathbb{Q}(\mu) & \iff \mathbb{Q}(\zeta_{|H/\ker(\psi_\mu)|}) = \mathbb{Q}(\zeta_{|Z(G)/\ker(\mu)|})\\
		& \iff |H/\ker(\psi_\mu)| = |Z(G)/\ker(\mu)|\\
		& \iff |\ker(\psi_\mu)| = |H/Z(G)| |\ker(\mu)| \\
		& \iff |\ker(\psi_\mu)| = |G/Z(G)|^{\frac{1}{2}} |\ker(\mu)|.
	\end{align*}
This completes the proof of Lemma \ref{lem:fieldofcharacter}.
\end{proof}

\begin{lemma}\label{lemma:cdG2Camina}
	Let $G$ be a finite $p$-group such that $(G, Z(G))$ forms a Camina pair and $|\cd(G)|=2$. Then $G$ is a VZ-group with $G'=Z(G)$.
\end{lemma}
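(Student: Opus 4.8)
The plan is to pin down the unique nonlinear character degree of $G$, show that every nonlinear irreducible character of $G$ actually lies in $\Irr(G\mid Z(G))$, and then combine the resulting VZ-property with the standard Camina-pair inclusion $Z(G)\le N\le G'$.

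First I would apply Lemma \ref{lemma:Caminacharacter}. Since $(G,Z(G))$ is a Camina pair and $Z(G)\neq 1$ (as $G$ is a nontrivial $p$-group), the set $\Irr(G\mid Z(G))$ is nonempty and, by \eqref{Caminacharacter}, every character in it has degree $|G/Z(G)|^{1/2}$. Because $|\cd(G)|=2$ the group $G$ is nonabelian, so $|G/Z(G)|^{1/2}>1$; writing $\cd(G)=\{1,m\}$ we therefore obtain $m=|G/Z(G)|^{1/2}$.

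The key step is to prove $\nl(G)=\Irr(G\mid Z(G))$. The inclusion $\Irr(G\mid Z(G))\subseteq\nl(G)$ is part of Lemma \ref{lemma:Caminacharacter}. Conversely, suppose $\chi\in\nl(G)$ with $Z(G)\subseteq\ker(\chi)$. Then $\chi$ inflates an irreducible character $\bar\chi$ of $\bar G:=G/Z(G)$ with $\bar\chi(1)=\chi(1)=m=|\bar G|^{1/2}$, so $\bar\chi(1)^2=|\bar G|$. This contradicts $\sum_{\psi\in\Irr(\bar G)}\psi(1)^2=|\bar G|$, since $\bar G$ is a nontrivial $p$-group, hence has more than one irreducible character, which forces the sum to strictly exceed $\bar\chi(1)^2$. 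Thus no nonlinear $\chi$ satisfies $Z(G)\subseteq\ker(\chi)$, i.e. $\nl(G)=\Irr(G\mid Z(G))$. By the Camina-pair hypothesis every $\chi\in\Irr(G\mid Z(G))$ vanishes on $G\setminus Z(G)$, so every nonlinear irreducible character of $G$ vanishes outside $Z(G)$; hence $(G,Z(G))$ is a generalized Camina pair and $G$ is a VZ-group.

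Finally, since $G$ is a VZ-group we have $G'\subseteq Z(G)$, while for any Camina pair $(G,N)$ one has $Z(G)\le N\le G'$, so taking $N=Z(G)$ gives $Z(G)\subseteq G'$. Therefore $G'=Z(G)$, as required. I expect the only genuine point of the argument to be the counting observation in the key step: once the common degree of the Camina characters is forced to equal the unique nonlinear degree $m$, the sum-of-squares identity rules out any nonlinear character inflated from $G/Z(G)$; everything else is assembling the quoted facts.
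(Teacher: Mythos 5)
Your proof is correct and follows essentially the same route as the paper's: identify the unique nonlinear degree as $|G/Z(G)|^{1/2}$ via Lemma \ref{lemma:Caminacharacter}, conclude $G$ is a VZ-group, and combine $G'\subseteq Z(G)$ (VZ) with $Z(G)\subseteq G'$ (Camina pair). The only difference is that the paper simply asserts that $\cd(G)=\{1,|G/Z(G)|^{1/2}\}$ implies $G$ is a VZ-group (implicitly relying on the characterization of groups with two extreme character degrees cited in the preliminaries), whereas you supply a self-contained justification via the sum-of-squares count in $G/Z(G)$ — a welcome addition rather than a deviation.
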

\begin{proof}
	Since $(G, Z(G))$ is a Camina pair, $Z(G) \subseteq G'$. By Lemma \ref{Caminacharacter}, $\cd(G) = \{1, |G/Z(G)|^{\frac{1}{2}}\}$, which implies that $G$ is a VZ-group. Hence, $G'\subseteq Z(G)$. This completes the proof of Lemma \ref{lemma:cdG2Camina}.
\end{proof}

\begin{corollary}\label{cor:reqpairCaminap5}
	Suppose $G$ is a non-abelian $p$-group of order $\leq p^5$ such that $(G, Z(G))$ forms a Camina pair. Let $H$ be a subgroup of $G$ of index $|G/Z(G)|^{\frac{1}{2}}$ with $\psi \in \lin(H)$ such that $\psi^G \in \Irr(G|Z(G))$.  Then $H$ is a normal abelian subgroup of $G$, and $G'$ is contained in $H$.
\end{corollary}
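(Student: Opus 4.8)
The plan is to read off the basic structure of $(H,\psi)$ from Lemma~\ref{lem:Caminareqpair}, and then to reduce the whole statement to proving just two facts, namely $G'\subseteq H$ and $H$ abelian: since any subgroup of $G$ containing $G'$ is automatically normal, these two facts yield all three assertions of the corollary. Write $d:=|G/Z(G)|^{\frac{1}{2}}$. By Lemma~\ref{lemma:Caminacharacter} every member of $\Irr(G\mid Z(G))$ is some $\chi_\mu$ with $1_{Z(G)}\neq\mu\in\Irr(Z(G))$ and has degree $d$, so $\psi^G=\chi_\mu$ for such a $\mu$. Lemma~\ref{lem:Caminareqpair} then gives $Z(G)\subseteq H$ and $\psi\downarrow_{Z(G)}=\mu$, whence $\ker(\psi)\cap Z(G)=\ker(\mu)$; since $\psi$ is linear, $H'\subseteq\ker(\psi)$, so $H'\cap Z(G)\subseteq\ker(\mu)$. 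Also $[G:Z(G)]=[G:H]\,[H:Z(G)]$ forces $[H:Z(G)]=d$, and since $d\ge p$ with $d^{2}\mid|G|\le p^{5}$ we have $d\in\{p,p^{2}\}$.

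If $d=p$, then $[H:Z(G)]=p$, so $H/Z(G)$ is cyclic, and as $Z(G)\subseteq Z(H)$ this makes $H$ abelian. Moreover $G/Z(G)$ has order $p^{2}$, hence is abelian, so every $\chi\in\Irr(G)$ with $Z(G)\subseteq\ker(\chi)$ is linear; since $G$ is nonabelian this gives $\cd(G)=\{1,p\}$, and Lemma~\ref{lemma:cdG2Camina} yields $G'=Z(G)\subseteq H$. This case is done.

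If $d=p^{2}$, then $|G|=p^{5}$, $|Z(G)|=p$, $\mu$ is faithful (so $\ker(\mu)=1$), and $|H|=p^{3}$. First, $H$ is abelian: otherwise $H$ is nonabelian of order $p^{3}$, so $H'=Z(H)\cong C_{p}$; since $Z(G)\subseteq Z(H)$ and $|Z(G)|=p$ we get $Z(H)=Z(G)$, hence $H'=Z(G)$, contradicting $H'\cap Z(G)\subseteq\ker(\mu)=1$. If in addition $\cd(G)=\{1,p^{2}\}$ (equivalently $G$ is extraspecial of order $p^{5}$, i.e.\ $G\in\Phi_{5}$), Lemma~\ref{lemma:cdG2Camina} again gives $G'=Z(G)\subseteq H$ and we are done.

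The remaining case $d=p^{2}$ with $\cd(G)=\{1,p,p^{2}\}$ — that is, $G\in\Phi_{7}\cup\Phi_{8}\cup\Phi_{10}$ — is the crux, because now $Z(G)\subsetneq G'$ and $G'\subseteq H$ has to be established directly; since $[G:H]=p^{2}$, the quotient $G/H$ is abelian as soon as $H\trianglelefteq G$, so it suffices to prove $H\trianglelefteq G$. I would first show that $H$ is self-centralizing: if $c\in C_{G}(H)\setminus H$, then $\psi$ extends to $[H\langle c\rangle:H]\ge p$ distinct linear characters of the abelian group $H\langle c\rangle$, so $\psi^{H\langle c\rangle}$ is their sum and $\chi_{\mu}=\psi^{G}$ becomes a proper sum of characters, contradicting irreducibility; hence $C_{G}(H)=H$. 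If $H$ were not normal, then $M:=N_{G}(H)$ would be a maximal (hence normal) subgroup with $[M:H]=p$ and $C_{M}(H)=H$, with $\psi^{M}\in\Irr(M)$ of degree $p$, $I_{G}(\psi^{M})=M$, $\psi^{M}$ vanishing off $H$, and $\psi^{M}\downarrow_{H}$ equal to the $M$-orbit sum of $\psi$. The hard part will be to rule this configuration out: I expect the contradiction to come from the requirement that $\chi_{\mu}\downarrow_{M}=\sum_{i=0}^{p-1}(\psi^{M})^{g^{i}}$ vanishes off $Z(G)$, together with the severe constraints imposed on $Z(M)$ and on $\ker(\psi^{M})$ by the faithfulness of $\mu$ on $Z(G)\cong C_{p}$; alternatively, one can finish by inspecting, via the presentations in \cite{RJ}, that in each of the finitely many groups of $\Phi_{7}\cup\Phi_{8}\cup\Phi_{10}$ every self-centralizing abelian subgroup of order $p^{3}$ containing $Z(G)$ is normal. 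In either case $H\trianglelefteq G$, and therefore $G'\subseteq H$, which completes the proof.
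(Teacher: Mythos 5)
Your reduction, and your handling of the first two configurations, are sound and essentially match the paper: the cases $\cd(G)=\{1,p\}$ and $\cd(G)=\{1,p^2\}$ are settled via Lemma \ref{lemma:cdG2Camina}, and your proof that $H$ is abelian when $\cd(G)=\{1,p,p^2\}$ (a nonabelian $H$ of order $p^3$ would force $H'=Z(H)=Z(G)\subseteq\ker(\psi)$, contradicting $\psi\downarrow_{Z(G)}=\mu\neq 1_{Z(G)}$) is exactly the paper's argument. The problem is the step you yourself flag as ``the hard part'': when $\cd(G)=\{1,p,p^2\}$ you never prove $H\trianglelefteq G$ (equivalently $G'\subseteq H$); you only describe two strategies you ``expect'' to work. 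That is a genuine gap, and I do not believe it can be closed, because the claim fails in that case. Take $G=\Phi_7(1^5)$ and $H=\langle\alpha,\beta,\alpha_3\rangle$. In James' presentation $[\alpha,\beta]=1$, and $\alpha,\beta$ are independent modulo $\Phi(G)=G'=\langle\alpha_2,\alpha_3\rangle$, so $H$ is abelian of order $p^3$ and contains $Z(G)=\langle\alpha_3\rangle$. Choosing $\psi\in\lin(H)$ with $\psi(\alpha_3)=\zeta_p$ and $\psi(\alpha)=\psi(\beta)=1$, the converse direction of Lemma \ref{lem:Caminareqpair} gives $\psi^G=\chi_\mu\in\Irr(G|Z(G))$ (one can also check directly that $\langle\psi^G,\chi_\mu\rangle=1$ using only that $\chi_\mu$ vanishes off $Z(G)$). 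Yet $\alpha_2\notin H$, so $G'\nsubseteq H$; and $\alpha_1^{-1}\alpha\alpha_1\equiv\alpha\alpha_2^{-1}$ modulo $\langle\alpha_3\rangle$ lies outside $H$, so $H$ is not normal (indeed $N_G(H)=\langle\alpha,\alpha_2,\alpha_3,\beta\rangle$ has index $p$). In particular your fallback plan --- checking that every self-centralizing abelian subgroup of order $p^3$ containing $Z(G)$ is normal --- would fail at the first family you inspect, and the ``configuration'' you hoped to rule out actually occurs.

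For comparison, the paper disposes of this case with the single sentence ``Note that $|G/H|=p^2$, implying $G'\subset H$,'' which is precisely the unjustified implication the example contradicts: a subgroup of index $p^2$ in a $p$-group need not contain the derived subgroup. So your instinct that this was the crux was correct, and your inability to finish is diagnostic of a defect in the statement rather than in your argument. Only the assertion ``$H$ is abelian'' survives in general in the three-degree case; the normality of $H$ and the containment $G'\subseteq H$ require either an extra hypothesis (for instance, restricting to the specific pairs $(H,\psi_\mu)$ exhibited in Tables \ref{t:3} and \ref{t:4}, all of which do contain $G'$, or assuming $|G'|=p^3$ as for $\Phi_{10}$ together with an argument that then $H=G'$) or should be dropped from the conclusion. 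You should flag this rather than attempt to patch it.
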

\begin{proof}
	Observe that $\cd(G) \in \{ \{1, p\}, \{1, p^2 \}, \{1, p, p^2\} \}$. First consider $\cd(G) \in \{ \{1, p\}, \{1, p^2 \} \}$. As $(G, Z(G))$ is a Camina pair and $|\cd(G)|=2$, from Lemma \ref{lemma:cdG2Camina}, $G$ is a VZ $p$-group. Hence, by \cite[Corollary 25]{Ram}, the result follows.\\
	Furthermore, if $\cd(G)=\{1, p, p^2\}$, then $|G|=p^5$ and $|Z(G)|=p$. Note that $|G/H|=p^2$, implying $G' \subset H$. Thus, $H$ is a normal subgroup of $G$. Now, suppose for a contradiction that $H$ is not abelian. Then $H'=Z(G)=Z(H)$. Consequently, $Z(G) \subseteq \ker(\psi)$, leading to $\psi \downarrow_{Z(G)} = 1_{Z(G)}$. Hence, by Lemma \ref{lem:Caminareqpair}, $\psi^G \notin \Irr(G|Z(G))$, which is a contradiction. Thus, $H$ must be abelian. This completes the proof of Corollary \ref{cor:reqpairCaminap5}.
\end{proof}

Theorem \ref{thm:reqpairPhi7Phi8Phi10} provides a method to construct irreducible rational matrix representations corresponding to the irreducible complex characters of degree $p^2$ of groups of order $p^5$ in $\Phi_7\cup  \Phi_8 \cup \Phi_{10}$.
\begin{theorem}\label{thm:reqpairPhi7Phi8Phi10}
	Let $G$ be a group of order $p^5$ ($p\geq 5$) such that $G\in \Phi_7\cup  \Phi_8 \cup \Phi_{10}$, and let $\chi_\mu \in \Irr^{(p^2)}(G)$ as defined in \eqref{Camina}. Then we have the following.
	\begin{enumerate}
		\item For each $G \in \Phi_{7}$, Table $\ref{t:3}$ determines an irreducible rational matrix representations of $G$  that affords the character $\Omega(\chi_\mu)$.
		\begin{tiny}
			\begin{longtable}[c]{|c|c|c|c|c|}
				\caption{A required pair $(H, \psi_\mu)$ to construct an irreducible rational matrix representation of $G \in \Phi_{7}$ that affords the character $\Omega(\chi_\mu)$, where $\chi_\mu \in \nl(G)$ (as defined in \eqref{Camina}). \label{t:3}}\\
				\hline
				Group $G$ & $Z(G)$ & $H$ & $\psi_\mu$ \\
				\hline
				\endfirsthead
				\hline
				\multicolumn{5}{|c|}{Continuation of Table $\ref{t:3}$}\\
				\hline
				Group $G$ & $Z(G)$ & $H$ & $\psi_\mu$ \\
				\hline
				\endhead
				\hline
				\endfoot
				\hline
				\endlastfoot
				\hline \vtop{\hbox{\strut $\Phi_7(2111)a=\langle \alpha, \alpha_1, \alpha_2, \alpha_3, \beta : [\alpha_i, \alpha]=\alpha_{i+1}, [\alpha_1, \beta]= \alpha_3=\alpha^p,$} \hbox{\strut $\alpha_1^{(p)}=\alpha_{i+1}^p=\beta^p=1  \, \, (i=1, 2) \rangle $}}  & $\langle \alpha_3 \rangle$ & $\langle \alpha_2, \alpha_3, \beta \rangle$ & $\psi_\mu(h) = \begin{cases}
					1  &\quad \text{ if } h=\alpha_2,\\
					\mu(\alpha_3)  &\quad \text{ if } h=\alpha_3,\\
					1  &\quad \text{ if } h=\beta\\
				\end{cases}$\\
				
				\hline \vtop{\hbox{\strut $\Phi_7(2111)b_r=\langle \alpha, \alpha_1, \alpha_2, \alpha_3, \beta : [\alpha_i, \alpha]=\alpha_{i+1}, [\alpha_1, \beta]^r= \alpha_3^r=\alpha_1^{(p)},$} \hbox{\strut $\alpha^p=\alpha_{i+1}^p=\beta^p=1 \, \, (i=1, 2)\rangle \, \, \text{for} \, \, r=1 \, \, \text{or} \, \, \nu$}}  & $\langle \alpha_3 \rangle$ & $\langle \alpha_2, \alpha_3, \beta \rangle$ & $\psi_\mu(h) = \begin{cases}
					1  &\quad \text{ if } h=\alpha_2,\\
					\mu(\alpha_3)  &\quad \text{ if } h=\alpha_3,\\
					1  &\quad \text{ if } h=\beta\\
				\end{cases}$\\
				
				\hline \vtop{\hbox{\strut $\Phi_7(2111)c=\langle \alpha, \alpha_1, \alpha_2, \alpha_3, \beta : [\alpha_i, \alpha]=\alpha_{i+1}, [\alpha_1, \beta]= \alpha_3=\beta^p,$} \hbox{\strut $\alpha^p=\alpha_1^p=\alpha_{i+1}^p=1  \, \, (i=1, 2) \rangle $, where $p \geq 5$}}  & $\langle \alpha_3 \rangle$ & $\langle \alpha_1, \alpha_2, \alpha_3 \rangle$ & $\psi_\mu(h) = \begin{cases}
					1  &\quad \text{ if } h=\alpha_1,\\
					1  &\quad \text{ if } h=\alpha_2,\\
					\mu(\alpha_3)  &\quad \text{ if } h=\alpha_3\\
				\end{cases}$\\
				
				\hline \vtop{\hbox{\strut $\Phi_7(2111)c=\langle \alpha, \alpha_1, \alpha_2, \alpha_3, \beta : [\alpha_i, \alpha]=\alpha_{i+1}, [\alpha_1, \beta]= \alpha_3=\beta^3,$} \hbox{\strut $\alpha^3=\alpha_1^3\alpha_3=\alpha_{i+1}^p=1  \, \, (i=1, 2) \rangle $}}  & $\langle \alpha_3 \rangle$ & $\langle \beta^{-2}\alpha_1, \alpha_2, \alpha_3 \rangle$ & $\psi_\mu(h) = \begin{cases}
					1  &\quad \text{ if } h=\beta^{-2}\alpha_1,\\
					1  &\quad \text{ if } h=\alpha_2,\\
					\mu(\alpha_3)  &\quad \text{ if } h=\alpha_3\\
				\end{cases}$\\
				
				\hline \vtop{\hbox{\strut $\Phi_7(1^5)=\langle \alpha, \alpha_1, \alpha_2, \alpha_3, \beta : [\alpha_i, \alpha]=\alpha_{i+1}, [\alpha_1, \beta]= \alpha_3,$} \hbox{\strut $\alpha^p=\alpha_1^{(p)}=\alpha_{i+1}^p=\beta^p=1  \, \, (i=1, 2) \rangle $}}  & $\langle \alpha_3 \rangle$ & $\langle \alpha_2, \alpha_3, \beta \rangle$ & $\psi_\mu(h) = \begin{cases}
					1  &\quad \text{ if } h=\alpha_2,\\
					\mu(\alpha_3)  &\quad \text{ if } h=\alpha_3,\\
					1  &\quad \text{ if } h=\beta\\
				\end{cases}$\\
				
				\hline
			\end{longtable}
		\end{tiny}
	\item For $G \in \Phi_{8}$, Table $\ref{t:4}$ determines an irreducible rational matrix representation of $G$ that affords the character $\Omega(\chi_\mu)$.
	\begin{tiny}
		\begin{longtable}[c]{|c|c|c|c|c|}
			\caption{A required pair $(H, \psi_\mu)$ to construct an irreducible rational matrix representation of $G \in \Phi_{8}$ that affords the character $\Omega(\chi_\mu)$, where $\chi_\mu \in \nl(G)$ (as defined in \eqref{Camina}). \label{t:4}}\\
			\hline
			Group $G$ & $Z(G)$ & $H$ & $\psi_\mu$ \\
			\hline
			\endfirsthead
			\hline
			\multicolumn{5}{|c|}{Continuation of Table $\ref{t:4}$}\\
			\hline
			Group $G$ & $Z(G)$ & $H$ & $\psi_\mu$ \\
			\hline
			\endhead
			\hline
			\endfoot
			\hline
			\endlastfoot
			\hline $\Phi_8(32)=\langle \alpha_1, \alpha_2, \beta : [\alpha_1, \alpha_2]= \beta=\alpha_1^p, \beta^{p^2}=\alpha_2^{p^2}=1$  & $\langle \alpha_1^{p^2} \rangle$ & $\langle \alpha_1^{p^2}, \alpha_2 \rangle$ & $\psi_\mu(h) = \begin{cases}
				\mu(\alpha_1^{p^2})  &\quad \text{ if } h=\alpha_1^{p^2},\\
				1  &\quad \text{ if } h=\alpha_2\\
			\end{cases}$\\
			
			\hline
		\end{longtable}
	\end{tiny}
\item For each $G\in\Phi_{10}$, $(G', \psi_\mu)$ is a required pair to compute an irreducible rational matrix representation of $G$ that affords character $\Omega(\chi_\mu)$, where $\psi_\mu \in \Irr(G' | Z(G))$ such that $\psi_\mu \downarrow_{Z(G)} = \mu$. 
	\end{enumerate}
\end{theorem}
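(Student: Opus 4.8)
The plan is to check, for every group and every tabulated pair, the three defining properties of a required pair: $\psi_\mu \in \lin(H)$, $\psi_\mu^G = \chi_\mu$, and $\mathbb{Q}(\psi_\mu) = \mathbb{Q}(\chi_\mu)$. By Lemma~\ref{lemma:characterPhi7Phi8Phi10}(3), for $G \in \Phi_7 \cup \Phi_8 \cup \Phi_{10}$ the pair $(G, Z(G))$ is a Camina pair, $|Z(G)| = p$, and $\chi_\mu$ is as in \eqref{Camina}, so $\chi_\mu(1) = p^2 = |G/Z(G)|^{\frac{1}{2}}$ and any required pair $(H,\psi_\mu)$ for $\Omega(\chi_\mu)$ must have $[G:H] = p^2$. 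Fixing this index, Lemma~\ref{lem:Caminareqpair} turns the requirement $\psi_\mu^G = \chi_\mu$ into the two conditions $Z(G) \subseteq H$ and $\psi_\mu \downarrow_{Z(G)} = \mu$, and Lemma~\ref{lem:fieldofcharacter} then turns $\mathbb{Q}(\psi_\mu) = \mathbb{Q}(\chi_\mu)$ into $|\ker(\psi_\mu)| = p^2$ (since $|Z(G)| = p$ and $\mu \neq 1_{Z(G)}$ force $\ker(\mu)$ trivial). So the whole proof reduces, case by case, to checking: (i) $H$ is a subgroup of index $p^2$ containing $Z(G)$; (ii) the prescribed $\psi_\mu$ is a well-defined linear character of $H$; (iii) $\psi_\mu$ restricts to $\mu$ on $Z(G)$; and (iv) $|\ker(\psi_\mu)| = p^2$.

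For parts (1) and (2) I would proceed through Tables~\ref{t:3} and~\ref{t:4} row by row. For each $G$, the presentation from \cite{RJ} exhibits $Z(G)$ explicitly; one checks that the listed generators of $H$ commute pairwise (so $H$ is the claimed abelian group of order $p^3$) and contain a generator of $Z(G)$, that the assignment defining $\psi_\mu$ is consistent with the relations among those generators (giving $\psi_\mu \in \lin(H)$), and that it sends the chosen generator of $Z(G)$ to $\mu$ of that element while killing the two complementary generators, which yields (iii) and (iv) at once. The only entries that are not purely formal are the $p = 3$ instances of $\Phi_7(2111)c$: there the relation $\alpha_1^3\alpha_3 = 1$ makes $\alpha_1$ have order $9$, so $\langle \alpha_1,\alpha_2,\alpha_3\rangle$ carries no linear character with kernel of order $9$ restricting nontrivially to $Z(G)$; replacing $\alpha_1$ by $\beta^{-2}\alpha_1$, which has order $3$ because $[\alpha_1,\beta]=\alpha_3$ is central (commutator collection), makes $H$ elementary abelian of rank $3$ and restores (ii)--(iv).

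For part (3): when $G \in \Phi_{10}$ we have $G/Z(G) \cong \Phi_3(1^4)$, hence $(G/Z(G))' = G'Z(G)/Z(G)$ has order $p^2$; as $Z(G) \subseteq G'$ (Camina pair), $|G'| = p^3$ and $[G:G'] = p^2 = |G/Z(G)|^{\frac{1}{2}}$. Corollary~\ref{cor:reqpairCaminap5} then gives that $G'$ is a normal abelian subgroup of $G$, so we may choose $\psi_\mu \in \Irr(G')$ with $\psi_\mu\downarrow_{Z(G)} = \mu$; since $\mu \neq 1_{Z(G)}$, such a $\psi_\mu$ lies in $\Irr(G' | Z(G))$, and Lemma~\ref{lem:Caminareqpair} gives $\psi_\mu^G = \chi_\mu$. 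Finally, one reads from the presentation in \cite{RJ} that $G'$ is elementary abelian of order $p^3$, so every extension $\psi_\mu$ of $\mu$ has image of order $p$ and thus $|\ker(\psi_\mu)| = p^2$; Lemma~\ref{lem:fieldofcharacter} then gives $\mathbb{Q}(\psi_\mu) = \mathbb{Q}(\chi_\mu)$, so $(G',\psi_\mu)$ is a required pair, and the argument is uniform across the family.

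I expect the main obstacle to be organizational rather than conceptual: the bulk of the work is the row-by-row checking that the tabulated $H$ is abelian of the right order and that $\psi_\mu$ is compatible with every defining relation, and the genuinely delicate point is the $p = 3$ exceptional presentations in $\Phi_7$, where the generating set of $H$ must be adjusted so that $H$ has exponent $p$ and $Z(G)$ is not absorbed into $\ker(\psi_\mu)$. For $\Phi_{10}$ the one structural fact one must extract from \cite{RJ} is that $G'$ is elementary abelian; granting it, everything else follows from Lemmas~\ref{lem:Caminareqpair} and~\ref{lem:fieldofcharacter} together with Corollary~\ref{cor:reqpairCaminap5}.
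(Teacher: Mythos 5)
Your proposal is correct and follows essentially the same route as the paper: it reduces the verification to Lemma~\ref{lem:Caminareqpair}, Lemma~\ref{lem:fieldofcharacter} and Corollary~\ref{cor:reqpairCaminap5} (forcing $Z(G)\subseteq H$, $\psi_\mu\downarrow_{Z(G)}=\mu$, $H$ abelian of index $p^2$ containing $G'$, and $|\ker(\psi_\mu)|=p^2$), and then checks the tabulated pairs case by case, with $H=G'$ forced in the $\Phi_{10}$ case. Your added detail on the exceptional $p=3$ presentation of $\Phi_7(2111)c$ and on why every extension of $\mu$ to $G'$ works for $\Phi_{10}$ only makes explicit what the paper leaves as ``routine to check.''
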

\begin{proof}
	Let $G$ be a non-abelian $p$-group of order $p^5$ ($p\geq 5$) such that $G\in \Phi_7\cup  \Phi_8$, and let $\chi_\mu \in \Irr^{(p^2)}(G)$ as defined in \eqref{Camina}. Suppose $(H, \psi_\mu)$ forms a required pair to obtain an irreducible rational matrix representation of $G$ that affords the character $\Omega(\chi_\mu)$. According to Lemma \ref{lem:Caminareqpair}, it follows that $Z(G) \subset H$ and $\psi_\mu\downarrow_{Z(G)} = \mu$. Additionally, from Corollary \ref{cor:reqpairCaminap5}, we know that $H$ is abelian and contains $G'$. Since $(H, \psi_\mu)$ is a required pair, $\mathbb{Q}(\psi_\mu) = \mathbb{Q}(\chi_\mu)$. Therefore, by Lemma \ref{lem:fieldofcharacter}, we must select $\psi_\mu \in \lin(H)$ such that $|\ker(\psi_\mu)| = |G/Z(G)|^{\frac{1}{2}} |\ker(\mu)|$. This implies that $|\ker(\psi_\mu)| = p^2$ because $Z(G)\cong C_p$. It is routine to check that all the pairs $(H, \psi_\mu)$ mentioned in Table \ref{t:3} and Table \ref{t:4} satisfy the conditions of being required pairs.\\
	 Now, let $G\in\Phi_{10}$. Then $|G'|=p^3$ (see \cite[Subsection 4.5]{RJ}). Hence from Corollary \ref{cor:reqpairCaminap5}, $(G', \psi_\mu)$ is a required pair to compute an irreducible rational matrix representation of $G$ whose character is $\Omega(\chi_\mu)$, where $\psi_\mu \in \Irr(G' | Z(G))$ such that $\psi_\mu \downarrow_{Z(G)} = \mu$. This completes the proof of Theorem \ref{thm:reqpairPhi7Phi8Phi10}.
\end{proof}

\begin{remark}
	\textnormal{Note that a required pair for constructing an irreducible rational matrix representation is generally not unique (see \cite[Remark 46]{Ram}). However, for $G \in \Phi_{10}$ and $\chi_\mu \in \Irr^{(p^2)}(G)$ as defined in \eqref{Camina}, the pair $(G', \psi_\mu)$ is uniquely determined for constructing an irreducible rational matrix representation of $G$ affording the character $\Omega(\chi_\mu)$, where $\psi_\mu \in \Irr(G' | Z(G))$ satisfies $\psi_\mu \downarrow_{Z(G)} = \mu$.}
\end{remark}

\subsection{Examples} Here, we provide a couple of examples to demonstrate the construction of irreducible rational matrix representations of groups of order $p^5$ using required pairs.
\begin{example}
	\textnormal{For $p \geq 5$, consider 
		$$G= \Phi_3(1^5) = \langle \alpha, \alpha_1, \alpha_2, \alpha_3, \alpha_4 : [\alpha_1, \alpha]=\alpha_2, [\alpha_2, \alpha]=\alpha_3, \alpha^p=\alpha_1^p=\alpha_2^p=\alpha_3^p=\alpha_4^p=1 \rangle.$$
		We have $ G' = \langle \alpha_2, \alpha_3 \rangle \cong C_p \times C_p$ and $C_G(G') = \langle \alpha_1, \alpha_2, \alpha_3, \alpha_4 \rangle \cong C_p \times C_p \times C_p \times C_p$. Further, let $\psi \in \Irr(C_G(G') | G')$ such that 
		$$\psi(h) = \begin{cases}
			1  &\quad \text{ if } h=\alpha_1,\\
			\zeta_p  &\quad \text{ if } h=\alpha_2,\\
			1  &\quad \text{ if } h=\alpha_3,\\
			1  &\quad \text{ if } h=\alpha_4.\\
		\end{cases}$$ 
		Suppose $\rho$ is an irreducible rational matrix representation of $C_G(G')$ that affords the character $\Omega(\psi)$. From  Lemma \ref{lemma:YamadaLinear}, we have $\rho(1)=p-1$, and the explicit form of $\rho $ is given by
		$$\rho(\alpha_1)= \rho(\alpha_3)=\rho(\alpha_4) = \left(\begin{array}{ccccc}
			1 & 0 & 0 & \cdots & 0\\
			0 & 1 & 0 & \cdots & 0\\
			\vdots & \vdots & \vdots & \ddots \\
			0 & 0 & \cdots & 1 & 0\\
			0 & 0 & \cdots & \cdots & 1
		\end{array}\right) = I, \text{ and }
		~ \rho(\alpha_2)=\left(\begin{array}{ccccc}
			0 & 1 & 0 & \cdots & 0\\
			0 & 0 & 1 & \cdots & 0\\
			\vdots & \vdots & \vdots & \ddots \\
			0 & 0 & \cdots & 0 & 1\\
			-1 & -1 & \cdots & \cdots & -1
		\end{array}\right).$$ 
		Set $\rho(\alpha_2) = P$, and let $O$ denote the zero matrix of order $(p-1)$. Next, let $\chi \in \nl(G)$ such that $\chi = \psi^G$. Then $(C_G(G'), \psi)$ is a required pair to obtain an irreducible rational matrix representation of $G$ that affords the character $\Omega(\chi)$ (see Theorem \ref{thm:reqpairPhi3}). Hence, $\rho^G$ is an irreducible rational matrix representation of degree $p^2-p$ of $G$ that affords the character $\Omega(\chi)$. The explicit form of $\rho^G$ is given by
		\begin{align*}
			& \rho^G(\alpha)=\left(\begin{array}{ccccc}
				O & O & O & \cdots & I\\
				I & O & O & \cdots & O\\
				O & I & O & \cdots & O\\
				\vdots & \vdots & \vdots & \ddots \\
				O & O & \cdots & I & 0
			\end{array}\right),
			~ \rho^G(\alpha_1)=\left(\begin{array}{ccccc}
				I & O & O & \cdots & O\\
				O & P & O & \cdots & O\\
				O & O & P^2 & \cdots & O\\
				\vdots & \vdots & \vdots & \ddots \\
				O & O & \cdots & \cdots & P^{p-1}
			\end{array}\right),\\
			&\rho^G(\alpha_2)=\left(\begin{array}{ccccc}
				P & O & O & \cdots & O\\
				O & P & O & \cdots & O\\
				O & O & P & \cdots & O\\
				\vdots & \vdots & \vdots & \ddots \\
				O & O & \cdots & O & P
			\end{array}\right), \text{ and }
			~ \rho^G(\alpha_3)= \left(\begin{array}{ccccc}
				I & O & O & \cdots & O\\
				O & I & O & \cdots & O\\
				O & O & I & \cdots & O\\
				\vdots & \vdots & \vdots & \ddots \\
				O & O & \cdots & \cdots & I
			\end{array}\right)= \rho^G(\alpha_4).
	\end{align*}}
\end{example}

\begin{example}
	\textnormal{Consider 
		$$G= \Phi_8(32) = \langle \alpha_1, \alpha_2, \beta : [\alpha_1, \alpha_2]=\beta=\alpha_1^p, \beta^{p^2}=\alpha_2^{p^2} \rangle.$$
		We have $ G' = \langle \alpha_1^p \rangle \cong C_{p^2}$ and $Z(G) = \langle \alpha_1^{p^2} \rangle \cong C_p$. From Lemma \ref{lemma:characterPhi7Phi8Phi10}, the pair $(G, Z(G))$ is a Camina pair. Let $\chi_\mu \in \Irr^{(p^2)}(G)$ as defined in \eqref{Camina}, where $\mu \in \Irr(Z(G))\setminus \{1_{Z(G)}\}$ such that $\mu(\alpha_1^{p^2})=\zeta_p$.  From Theorem \ref{thm:reqpairPhi7Phi8Phi10}, $(H, \psi_\mu)$ is a required pair to obtain an irreducible rational matrix representation of $G$ affording the character $\Omega(\chi_\mu)$, where $H = \langle \alpha_1^{p^2}, \alpha_2 \rangle$ and $\psi_\mu \in \lin(H)$ such that $\psi_\mu(\alpha_1^{p^2}) = \mu(\alpha_1^{p^2})=\zeta_p$ and $\psi_\mu(\alpha_2) = 1$. Let $\Psi_\mu$ denote an irreducible rational matrix representation of $H$ that affords the character $\Omega(\psi_\mu)$. Then $\Psi_\mu(1)=p-1$, and the explicit form of $\Psi_\mu$ is given by
		$$\Psi_\mu(\alpha_1^{p^2})=\left(\begin{array}{ccccc}
			0 & 1 & 0 & \cdots & 0\\
			0 & 0 & 1 & \cdots & 0\\
			\vdots & \vdots & \vdots & \ddots \\
			0 & 0 & \cdots & 0 & 1\\
			-1 & -1 & \cdots & \cdots & -1
		\end{array}\right), \text{ and }
		~ \Psi_\mu(\alpha_2)= \left(\begin{array}{ccccc}
			1 & 0 & 0 & \cdots & 0\\
			0 & 1 & 0 & \cdots & 0\\
			\vdots & \vdots & \vdots & \ddots \\
			0 & 0 & \cdots & 1 & 0\\
			0 & 0 & \cdots & \cdots & 1
		\end{array}\right) = I$$
		(see Lemma \ref{lemma:YamadaLinear}). 
		Next, set $\Psi_\mu(\alpha_1^{p^2}) = P$, and let $O$ denote the zero matrix of order $(p-1)$. Then an irreducible rational matrix representation $\Psi^G$ of degree $p^3-p^2$ of $G$ affording the character $\Omega(\chi_\mu)$ is given by
		\[\Psi_\mu^G(\alpha_1)=\left(\begin{array}{ccccc}
			O & O & O & \cdots & P\\
			I & O & O & \cdots & O\\
			O & I & O & \cdots & O\\
			\vdots & \vdots & \vdots & \ddots \\
			O & O & \cdots & I & 0
		\end{array}\right), \text{ and }
		~ \Psi_\mu^G(\alpha_2)=\left(\begin{array}{ccccc}
			I & O & O & \cdots & O\\
			O & P & O & \cdots & O\\
			O & O & P^2 & \cdots & O\\
			\vdots & \vdots & \vdots & \ddots \\
			O & O & \cdots & \cdots & P^{p^2-1}
		\end{array}\right).\]
		Moreover, $\Psi^G$ is the unique faithful irreducible rational representation of $G$.}
\end{example}

\section{Rational group algebras} \label{section:applications}
In this section, we provide a combinatorial description for the Wedderburn decomposition of rational group algebras of groups of order $p^5$, using results on their rational representations. The Artin-Wedderburn theorem states that a semisimple ring is a direct sum of matrix rings over division rings. By the Brauer-Witt theorem, the Wedderburn components of a rational group algebra are Brauer equivalent to cyclotomic algebras (see \cite{Yam}). Perlis and Walker \cite{PW} studied the rational group algebras of finite abelian groups.

\begin{theorem}[Perlis-Walker theorem]\label{Perlis-walker}
	Let $G$ be a finite abelian group of exponent $m$. Then the Wedderburn decomposition of $\mathbb{Q}G$ is given by
	$$\mathbb{Q}G \cong \bigoplus_{d|m} a_d \mathbb{Q}(\zeta_d),$$
	where $a_d$ denotes the number of cyclic subgroups of $G$ of order $d$.
\end{theorem}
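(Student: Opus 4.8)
The plan is to combine the classical structure theory of semisimple commutative algebras with the counting statement in Lemma \ref{lemma:Ayoub}. First I would observe that since $\mathrm{char}\,\mathbb{Q}=0$, Maschke's theorem makes $\mathbb{Q}G$ semisimple, and since $G$ is abelian $\mathbb{Q}G$ is commutative; hence by the Artin--Wedderburn theorem $\mathbb{Q}G \cong \bigoplus_{i} K_i$ is a direct sum of number fields $K_i$. The simple components $K_i$ are in bijection with the isomorphism classes of irreducible $\mathbb{Q}$-representations of $G$, which, as recalled in the introduction, are in bijection with the Galois conjugacy classes of $\Irr(G)$ over $\mathbb{Q}$; under this correspondence the component attached to the Galois orbit of $\chi \in \Irr(G)$ is the character field $\mathbb{Q}(\chi)$, the Schur index being $1$ here because $\mathbb{Q}G$ is commutative and so has no noncommutative Wedderburn components.

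Next, since $G$ is abelian every $\chi \in \Irr(G)$ is linear, so $\chi$ is a homomorphism $G \to \mathbb{C}^\times$ whose image is the cyclic group $\langle \zeta_d \rangle$, where $d$ is the order of $\chi$ in $\widehat{G}$; in particular $d \mid \exp(G)=m$ and $\mathbb{Q}(\chi)=\mathbb{Q}(\zeta_d)$. Thus each $K_i$ is of the form $\mathbb{Q}(\zeta_d)$ for some $d \mid m$, and it remains to count, for each $d \mid m$, the number of Galois conjugacy classes of characters $\chi$ with $\mathbb{Q}(\chi)=\mathbb{Q}(\zeta_d)$. Here I would use the identification of Galois conjugacy with the equivalence relation $\ker(\chi)=\ker(\psi)$: a Galois conjugate of $\chi$ has the same kernel as $\chi$ (as noted in Section \ref{sec:preliminaries}), and conversely, by \cite[Lemma 29]{Ram}, two linear characters with the same kernel are Galois conjugate; hence for abelian $G$ the Galois conjugacy classes in $\Irr(G)$ coincide with the equivalence classes in the sense of Lemma \ref{lemma:Ayoub}. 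By that lemma the number of such classes with character field $\mathbb{Q}(\zeta_d)$ equals $a_d$, the number of cyclic subgroups of $G$ of order $d$, which yields $\mathbb{Q}G \cong \bigoplus_{d \mid m} a_d\,\mathbb{Q}(\zeta_d)$. As a consistency check one verifies $\sum_{d \mid m} a_d\,\phi(d) = |G| = \dim_{\mathbb{Q}} \mathbb{Q}G$, which holds because every element of $G$ is a generator of a unique cyclic subgroup, and a cyclic subgroup of order $d$ has exactly $\phi(d)$ generators.

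As for the main obstacle: in this abelian setting there is essentially no serious difficulty, since all the algebra-structure input (semisimplicity, commutativity, Schur index $1$) is automatic; the only point requiring care is the bookkeeping that matches each Galois conjugacy class of $\chi$ with a cyclic subgroup of $G$ of order $|\chi|$ in a way compatible with Lemma \ref{lemma:Ayoub}. By contrast, in the nonabelian families treated in the preceding sections the analogous decompositions genuinely require the character-theoretic machinery and the explicit required pairs developed earlier, which is precisely why this classical result is singled out here as the starting point.
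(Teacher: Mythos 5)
Your argument is correct. Note, however, that the paper does not prove Theorem \ref{Perlis-walker} at all: it is quoted as a classical result with a citation to Perlis and Walker \cite{PW}, so there is no in-paper proof to match. What you have written is a legitimate self-contained derivation assembled from ingredients the paper does make available: semisimplicity and commutativity of $\mathbb{Q}G$ force the Wedderburn components to be number fields indexed by Galois conjugacy classes of $\Irr(G)$ (with Schur index $1$ automatic, so no appeal to Lemma \ref{lemma:schurindexpgroup} or Lemma \ref{Reiner} is needed); linearity of every $\chi$ gives $\mathbb{Q}(\chi)=\mathbb{Q}(\zeta_d)$ with $d$ the order of $\chi$ in $\widehat{G}$, hence $d\mid m$; and the identification of Galois conjugacy with kernel-equality for linear characters (\cite[Lemma 29]{Ram} together with the remark preceding it) reduces the count of components isomorphic to $\mathbb{Q}(\zeta_d)$ exactly to Lemma \ref{lemma:Ayoub}, which returns $a_d$. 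The dimension check $\sum_{d\mid m} a_d\,\phi(d)=|G|$ is a sound sanity test. The only stylistic caveat is that Lemma \ref{lemma:Ayoub} is itself Ayoub's counting lemma, which is essentially the combinatorial heart of the Perlis--Walker theorem, so your proof is less "from scratch" than it may appear; but as a derivation of the stated decomposition from the results quoted in Section \ref{sec:preliminaries}, it is complete and correct.
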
 

In \cite{Ram}, we formulate the computation of the Wedderburn decomposition of rational group algebra of a VZ $p$-group $G$ solely based on computing the number of cyclic subgroups of $G/G'$, $Z(G)$ and $Z(G)/G'$, which is analogous to the combinatorial formula given by Perlis and Walker for the Wedderburn decomposition of rational group algebras of finite abelian groups. 

\begin{theorem}\cite[Theorem 1]{Ram}\label{lemma:Wedderburn VZ}
	Let $G$ be a finite VZ $p$-group, where $p$ is an odd prime. Let $m_1$, $m_2$ and $m_3$ denote the exponents of $G/G'$, $Z(G)$ and $Z(G)/G'$, respectively. Then the Wedderburn decomposition of $\mathbb{Q}G$ is given by
	$$\mathbb{Q}G \cong \bigoplus_{d_1|m_1}a_{d_1}\mathbb{Q}(\zeta_{d_1})\bigoplus_{d_2\mid m_2, d_2 \nmid m_3} a_{d_2}M_{|G/Z(G)|^{\frac{1}{2}}}(\mathbb{Q}(\zeta_{d_2}))\bigoplus_{d_2|m_2, d_2|m_3}(a_{d_2}-a_{d_2}')M_{|G/Z(G)|^{\frac{1}{2}}}(\mathbb{Q}(\zeta_{d_2})),$$
	where $a_{d_1}$, $a_{d_2}$ and $a_{d_2}'$ are the number of cyclic subgroups of $G/G'$ of order $d_1$, the number of cyclic subgroups of $Z(G)$ of order $d_2$ and the number of cyclic subgroups of $Z(G)/G'$ of order $d_2$, respectively.
\end{theorem}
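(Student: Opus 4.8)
The plan is to split the simple components of $\mathbb{Q}G$ into those afforded by $\lin(G)$ and those afforded by $\nl(G)$, dispatch the first block by the Perlis--Walker theorem, and then count the second block by transporting the $\mathbb{Q}$-Galois action from $\nl(G)$ to $\Irr(Z(G)\mid G')$ and applying Lemma \ref{lemma:Ayoub} twice. Recall that the simple components of $\mathbb{Q}G$ are in bijection with the Galois conjugacy classes of $\Irr(G)$, and that for $\chi\in\Irr(G)$ with Schur index $m_\mathbb{Q}(\chi)=1$ the component attached to the class of $\chi$ is $M_{\chi(1)}(\mathbb{Q}(\chi))$. Since $G$ is a $p$-group with $p$ odd, Lemma \ref{lemma:schurindexpgroup} gives $m_\mathbb{Q}(\chi)=1$ for every $\chi\in\Irr(G)$, so every component of $\mathbb{Q}G$ is of this explicit shape.

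First I would observe that $\mathbb{Q}[G/G']$ is a semisimple quotient, hence a direct summand, of $\mathbb{Q}G$, and its simple components are exactly those of $\mathbb{Q}G$ whose complex characters factor through $G/G'$, i.e.\ the linear ones. Since $G$ is VZ we have $G'\subseteq Z(G)$ and, by \eqref{VZ}, every $\chi\in\nl(G)$ lies in $\Irr(G\mid G')$; thus
\[
\mathbb{Q}G\;\cong\;\mathbb{Q}[G/G']\;\oplus\;\bigoplus_{[\chi]\subseteq\nl(G)} M_{\chi(1)}(\mathbb{Q}(\chi)),
\]
the last sum running over Galois conjugacy classes inside $\nl(G)$. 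Applying Theorem \ref{Perlis-walker} to the abelian group $G/G'$ converts $\mathbb{Q}[G/G']$ into $\bigoplus_{d_1\mid m_1}a_{d_1}\mathbb{Q}(\zeta_{d_1})$, which is the first summand in the statement.

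Next I would make the non-linear components explicit. For $\chi_\mu\in\nl(G)$ as in \eqref{VZ}, the degree factor $|G/Z(G)|^{1/2}$ is a power of $p$, hence rational; so $\mathbb{Q}(\chi_\mu)=\mathbb{Q}(\mu)$, and for every $\sigma$ in the relevant Galois group we get $\chi_\mu^\sigma=\chi_{\mu^\sigma}$, meaning the bijection $\mu\leftrightarrow\chi_\mu$ between $\Irr(Z(G)\mid G')$ and $\nl(G)$ is Galois-equivariant. Hence the component attached to the class of $\chi_\mu$ is $M_{|G/Z(G)|^{1/2}}(\mathbb{Q}(\zeta_d))$, where $d$ is the integer with $\mathbb{Q}(\mu)=\mathbb{Q}(\zeta_d)$, and $d\mid m_2$ since $d$ divides the exponent of $Z(G)$. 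So it only remains, for each $d_2\mid m_2$, to count the number $N(d_2)$ of Galois classes in $\nl(G)$ with character field $\mathbb{Q}(\zeta_{d_2})$: by Galois-equivariance together with the fact that two characters of the abelian group $Z(G)$ are Galois conjugate over $\mathbb{Q}$ exactly when they have equal kernels, $N(d_2)$ equals the number of kernel-classes of $\mu\in\Irr(Z(G)\mid G')$ with $\mathbb{Q}(\mu)=\mathbb{Q}(\zeta_{d_2})$. Lemma \ref{lemma:Ayoub} applied to $Z(G)$ gives $a_{d_2}$ such kernel-classes once the restriction $G'\nsubseteq\ker\mu$ is dropped; the discarded characters are precisely those inflated from $Z(G)/G'$, and Lemma \ref{lemma:Ayoub} applied to $Z(G)/G'$ produces $a_{d_2}'$ of them. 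Therefore $N(d_2)=a_{d_2}-a_{d_2}'$, which is $a_{d_2}$ when $d_2\nmid m_3$ (as then $a_{d_2}'=0$) and $a_{d_2}-a_{d_2}'$ when $d_2\mid m_3$; summing $N(d_2)\,M_{|G/Z(G)|^{1/2}}(\mathbb{Q}(\zeta_{d_2}))$ over $d_2\mid m_2$ and adjoining the linear block gives the claimed decomposition.

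The genuinely delicate points are the Galois-equivariance of $\mu\leftrightarrow\chi_\mu$ (immediate once one notes the degree factor is rational) and the bookkeeping that factoring out $G'$ removes exactly the $a_{d_2}'$ inflated kernel-classes; everything else is a transcription of Perlis--Walker and Ayoub. As a safeguard against an off-by-one error in the count, one can verify the dimension identity $\sum_{d_2\mid m_2}(a_{d_2}-a_{d_2}')\,|G/Z(G)|\,\phi(d_2)=|G|-|G/G'|$, which is consistent with $|\nl(G)|=|Z(G)|-|Z(G)/G'|$ and $\chi_\mu(1)=|G/Z(G)|^{1/2}$.
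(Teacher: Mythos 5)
Your proof is correct, and it is essentially the argument the authors themselves use: the paper only cites this statement from \cite{Ram} without reproving it, but your route (Galois conjugacy classes of $\Irr(G)$ indexing the simple components, Schur index $1$ via Lemma \ref{lemma:schurindexpgroup} and Reiner's theorem to get $M_{\chi(1)}(\mathbb{Q}(\chi))$, Perlis--Walker for the linear block, and the Galois-equivariant bijection $\mu\leftrightarrow\chi_\mu$ combined with Lemma \ref{lemma:Ayoub} applied to $Z(G)$ and $Z(G)/G'$ for the non-linear block) is exactly the strategy of the cited proof and of the analogous Theorems \ref{thm:rationalgroupalgebraPhi4}--\ref{thm:rationalgroupalgebraPhi9} in Section \ref{section:applications}. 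No gaps; the dimension check at the end is a nice sanity verification.
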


Note that groups of order $p^5$ in $\Phi_1$ are abelian, so Theorem \ref{Perlis-walker} determines the Wedderburn decomposition of their rational group algebras. Similarly, groups of order $p^5$ in $\Phi_2 \cup \Phi_5$ are VZ $p$-groups, and the Wedderburn decomposition of their rational group algebras follows from Theorem \ref{lemma:Wedderburn VZ}. We now consider the Wedderburn decomposition of rational group algebras for groups of order $p^5$ in $\Phi_4$.

\begin{theorem}\label{thm:rationalgroupalgebraPhi4}
	Let $G$ be a group of order $p^5$ such that $G \in \Phi_4$, and let $C_p \cong K < Z(G)$. Let $m_K$ and $m_K'$ denote the exponents of $Z(G/K)$ and $Z(G/K)/(G/K)'$, respectively. Then the Wedderburn decomposition of $\mathbb{Q}G$ is as follows:
	\begin{align*}
		\mathbb{Q}G \cong & \bigoplus \mathbb{Q}(G/G') \bigoplus_{C_p \cong K < Z(G)}~ \bigoplus_{\substack{d_K|m_K\\ d_K \nmid m_K'}} a_{d_K}M_p(\mathbb{Q}(\zeta_{d_K}))\\
		& \bigoplus_{C_p \cong K < Z(G)} ~\bigoplus_{\substack{d_K|m_K\\ d_K|m_K'}}(a_{d_K}-a_{d_K}')M_p(\mathbb{Q}(\zeta_{d_K})),
	\end{align*}
	where $a_{d_K}$ and $a_{d_K}'$ are the number of cyclic subgroups of order $d_K$ of $Z(G/K)$ and $Z(G/K)/(G/K)'$, respectively.
\end{theorem}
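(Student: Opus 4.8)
The plan is to peel off the part of $\mathbb{Q}G$ coming from the linear characters and then recognize the remaining (non-linear) part as a direct sum, over the order-$p$ subgroups $K$ of $Z(G)$, of the non-linear parts of the rational group algebras $\mathbb{Q}(G/K)$; since each $G/K$ is a VZ $p$-group of order $p^4$, Theorem \ref{lemma:Wedderburn VZ} then supplies the explicit components. First I would write $\mathbb{Q}G = \mathbb{Q}(G/G') \oplus B$, where $\mathbb{Q}(G/G')$ is the quotient of $\mathbb{Q}G$ by the ideal generated by $\{g-1 : g \in G'\}$. This quotient is semisimple, and its Wedderburn components are precisely the $e_{\mathbb{Q}}(\chi)\mathbb{Q}G$ with $G' \subseteq \ker(\chi)$, i.e. those attached to the Galois classes in $\lin(G)$; hence $B$ is the sum of the components attached to the Galois classes in $\nl(G)$, and it remains to describe $B$.

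Next, fix $K$ with $C_p \cong K < Z(G)$. The natural surjection $\mathbb{Q}G \twoheadrightarrow \mathbb{Q}(G/K)$ realizes $\mathbb{Q}(G/K)$ as a semisimple quotient of $\mathbb{Q}G$, so its Wedderburn components form the sub-collection $\{e_{\mathbb{Q}}(\chi)\mathbb{Q}G : K \subseteq \ker(\chi)\}$ of those of $\mathbb{Q}G$. Since $G$ has nilpotency class $2$ with $G' = Z(G) \cong C_p \times C_p$, the quotient $G/K$ is a non-abelian VZ $p$-group of order $p^4$ (as noted in the proof of Lemma \ref{lemma:characterPhi4}), so $\cd(G/K) = \{1,p\}$, and by Lemma \ref{lemma:characterPhi4}(2) every $\chi \in \nl(G)$ is the lift of a non-linear irreducible character of $G/K_\chi$ for exactly one such subgroup, namely $K_\chi = \ker(\chi) \cap Z(G)$ (this intersection is a proper, non-trivial subgroup of $Z(G) \cong C_p \times C_p$ because $\chi$ is non-linear and $G/Z(G)$ is abelian; cf. Lemma \ref{lemma:centernon-cyclic}). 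As Galois conjugation preserves kernels and character fields, the map $\chi \mapsto K_\chi$ descends to the Galois classes, and the non-linear Wedderburn components of $\mathbb{Q}(G/K)$ are exactly the $e_{\mathbb{Q}}(\chi)\mathbb{Q}G$ with $\chi \in \nl(G)$ and $K_\chi = K$. Therefore
\[
B \;\cong\; \bigoplus_{C_p \cong K < Z(G)} \big(\text{non-linear part of } \mathbb{Q}(G/K)\big),
\]
the union being over the partition of the non-linear components of $\mathbb{Q}G$ induced by $\chi \mapsto K_\chi$ (a consistency check: there are $p+1$ choices of $K$, each contributing $|\nl(G/K)| = p^2-p$ characters, matching $|\nl(G)| = p^3-p$).

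Finally, apply Theorem \ref{lemma:Wedderburn VZ} to the VZ $p$-group $G/K$ of order $p^4$, for which $|(G/K)/Z(G/K)|^{\frac{1}{2}} = p$: discarding the first summand there (which records the linear characters of $G/K$, already accounted for in $\mathbb{Q}(G/G')$) leaves
\[
\bigoplus_{\substack{d_K \mid m_K \\ d_K \nmid m_K'}} a_{d_K}\, M_p(\mathbb{Q}(\zeta_{d_K})) \;\bigoplus_{\substack{d_K \mid m_K \\ d_K \mid m_K'}} (a_{d_K} - a_{d_K}')\, M_p(\mathbb{Q}(\zeta_{d_K})),
\]
with $m_K, m_K', a_{d_K}, a_{d_K}'$ as in the statement; substituting this into the formula for $B$ yields the asserted decomposition. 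I expect the one delicate point to be the bookkeeping of the previous paragraph — that the families of Wedderburn components of the various $\mathbb{Q}(G/K)$ sit disjointly inside $\mathbb{Q}G$ and jointly exhaust its non-linear part — which is precisely where the structure $Z(G) = G' \cong C_p \times C_p$ and the bijection of Lemma \ref{lemma:characterPhi4}(2) are essential; the rest is the standard behaviour of Wedderburn components under ring quotients together with a direct appeal to Theorem \ref{lemma:Wedderburn VZ}.
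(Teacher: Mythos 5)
Your proposal is correct and follows essentially the same route as the paper's proof: split off $\mathbb{Q}(G/G')$ for the linear characters, use the fact that $Z(G)=G'\cong C_p\times C_p$ to partition $\nl(G)$ according to $K_\chi=\ker(\chi)\cap Z(G)$ (this is the content of Lemma \ref{lemma:characterPhi4}(2)), identify the components with $K\subseteq\ker(\chi)$ with the non-linear part of the VZ quotient $\mathbb{Q}(G/K)$, and invoke Theorem \ref{lemma:Wedderburn VZ}. Your explicit justification that the map $\chi\mapsto K_\chi$ is well defined on Galois classes and that the sub-collections for distinct $K$ are disjoint is slightly more detailed than the paper's counting argument, but it is the same idea.
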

\begin{proof}
	Let $G$ be a group of order $p^5$ such that $G \in \Phi_4$. For $\chi \in \lin(G)$, there exists $\bar{\chi} \in \Irr(G/G')$ such that $\bar{\chi}$ lifts to $\chi$. Moreover, there is a bijection between $\lin(G)$ and $\Irr(G/G')$. Hence, the simple components of the Wedderburn decomposition of $\mathbb{Q}G$ corresponding to all irreducible $\mathbb{Q}$-representations of $G$ whose kernels contain $G'$ are isomorphic to $\mathbb{Q}(G/G')$.\\
	Further, $Z(G)= G' \cong C_p \times C_p$ (see \cite[Subsection 4.5]{RJ}). Suppose $C_p \cong K < Z(G)$. Observe that $(G/K)' \cong C_p$. Thus, $|Z(G/K)|= p^2$ and $G/K$ is a VZ $p$-group of order $p^4$. From Lemma \ref{lemma:characterPhi4}, every non-linear irreducible complex character of  $G$ is the lift of some non-linear irreducible complex character of $G/K$ for some $K$. More precisely, there exists a bijection between $\{\bar{\chi} \in \nl(G/K) : C_p \cong K < Z(G)\}$ and $\nl(G)$.\\
	Next, fix a	subgroup $K$ such that $C_p \cong K < Z(G)$. Observe that there is a bijection between $\nl(G/K)$ and $\{\chi \in \nl(G) : K\subseteq \ker(\chi)\}$. Hence, from Theorem \ref{lemma:Wedderburn VZ}, the simple components of the Wedderburn decomposition of $\mathbb{Q}G$ corresponding to all irreducible $\mathbb{Q}$-representations of $G$ whose kernels contain $K$ contribute
	$$\bigoplus_{\substack{d_K|m_K\\ d_K \nmid m_K'}} a_{d_K}M_p(\mathbb{Q}(\zeta_{d_K})) \bigoplus_{\substack{d_K|m_K\\ d_K|m_K'}}(a_{d_K}-a_{d_K}')M_p(\mathbb{Q}(\zeta_{d_K}))$$
	in $\mathbb{Q}G$, where $m_K$ and $m_K'$ are the exponents of $Z(G/K)$ and $Z(G/K)/(G/K)'$, and $a_{d_K}$ and $a_{d_K}'$ are the number of cyclic subgroups of order $d_K$ of $Z(G/K)$ and $Z(G/K)/(G/K)'$, respectively. Therefore, the proof follows.
\end{proof}

 Theorem \ref{thm:rationalgroupalgebraPhi6} gives a combinatorial formulation for the Wedderburn decomposition of  rational group algebras associated with the groups of order $p^5$ belonging to $\Phi_6$.
\begin{theorem}\label{thm:rationalgroupalgebraPhi6}
	Let $G$ be a group of order $p^5$ such that $G \in \Phi_6$, and let $C_p \cong K < Z(G)$. Let $m_K$ and $m_K'$ denote the exponents of $C_{G/K}((G/K)')$ and $C_{G/K}((G/K)')/(Z(G)/K)$, respectively. Then the Wedderburn decomposition of $\mathbb{Q}G$ is as follows:
	\begin{align*}
		\mathbb{Q}G \cong & \bigoplus \mathbb{Q}(G/G') \bigoplus M_p(\mathbb{Q}(\zeta_p))~ \bigoplus_{C_p \cong K < Z(G)}~\bigoplus_{\substack{d_K|m_K \\ d_K \nmid m_K'}} \frac{a_{d_K}}{p}M_p(\mathbb{Q}(\zeta_{d_K}))\\
		& \bigoplus_{C_p \cong K < Z(G)}~\bigoplus_{\substack{d_K|m_K \\ d_K|m_K'}}\frac{a_{d_K}-a_{d_K}'}{p}M_p(\mathbb{Q}(\zeta_{d_K})),
	\end{align*}
	where $a_{d_K}$ and $a_{d_K}'$ are the number of cyclic subgroups of order $d_K$ of the centralizer $C_{G/K}((G/K)')$ and the quotient group $C_{G/K}((G/K)')/(Z(G)/K)$, respectively.
\end{theorem}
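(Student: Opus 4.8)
The plan is to split the Wedderburn decomposition of $\mathbb{Q}G$ into three parts according to the kernels of the irreducible $\mathbb{Q}$-representations of $G$: those whose kernel contains $G'$, those in $\nl(G)$ whose kernel does not contain $Z(G)$, and those in $\nl(G)$ whose kernel contains $Z(G)$ but not $G'$. For the first part, as in the proof of Theorem \ref{thm:rationalgroupalgebraPhi4}, the bijection between $\lin(G)$ and $\Irr(G/G')$ shows that the corresponding simple components assemble into $\mathbb{Q}(G/G')$. For the second part, Lemma \ref{lemma:characterPhi6}(2) gives $|\nl(G/Z(G))| = p-1$, and these $p-1$ characters form a single Galois conjugacy class (their common kernel is $Z(G)$ and they are all of degree $p$ with values in $\mathbb{Q}(\zeta_p)$); lifting through $G/Z(G)\cong\Phi_2(1^3)$-type quotient — more precisely using that $G/Z(G)$ has a unique faithful irreducible rational representation of degree $p-1$ with endomorphism algebra $M_p(\mathbb{Q}(\zeta_p))$ (this can be read off from the order-$p^3$ extraspecial-type case handled in \cite{Ram}) — contributes the single block $M_p(\mathbb{Q}(\zeta_p))$.

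The third part is the substantive one. By Lemma \ref{lemma:characterPhi6}(2), every $\chi \in \nl(G)$ with $Z(G)\nsubseteq\ker(\chi)$ is NOT of this type, so here I need instead the characters in $\nl(G)$ whose kernel contains $Z(G)$ — equivalently, since $Z(G)$ is not cyclic, by Lemma \ref{lemma:centernon-cyclic} every such $\chi$ is a lift of some $\bar\chi\in\nl(G/K)$ for a minimal $K\cong C_p$ in $Z(G)$, and $G/K$ is a $p$-group of order $p^4$ of nilpotency class $3$. For a fixed such $K$, I will invoke Lemma \ref{lemma:characterPhi3Phi9}(2) (applicable since $G/K$ has the same structure as the $\Phi_3$-type groups: $C_{G/K}((G/K)')$ is the unique abelian subgroup of index $p$) to parametrize $\nl(G/K|Z(G)/K)$ by $\Irr(C_{G/K}((G/K)')\,|\,(G/K)')$ modulo the $p$-fold conjugacy action, and then apply Theorem \ref{lemma:Wedderburn VZ}-style counting — i.e. the Ayoub/Perlis–Walker bookkeeping of Lemma \ref{lemma:Ayoub} on the abelian group $C_{G/K}((G/K)')$ — to get that the blocks indexed by $K$ contribute
$$
\bigoplus_{\substack{d_K|m_K\\ d_K\nmid m_K'}}\frac{a_{d_K}}{p}M_p(\mathbb{Q}(\zeta_{d_K}))
\ \bigoplus\
\bigoplus_{\substack{d_K|m_K\\ d_K|m_K'}}\frac{a_{d_K}-a_{d_K}'}{p}M_p(\mathbb{Q}(\zeta_{d_K})),
$$
the division by $p$ accounting for the $p$-element conjugacy orbits and the matrix size $p$ coming from $\cd(G)=\{1,p,p^2\}$ together with the induction from $C_{G/K}((G/K)')$ (index $p$) composed with the lift $G\to G/Z(G)$; I must check that the field of character values is preserved along $\psi\mapsto\psi^{G/K}\mapsto\text{(lift to }G)$, which is exactly the content of the field-computation arguments in the proof of Theorem \ref{thm:reqpairPhi3}(1) (the Gauss-sum-type estimate showing $\mathbb{Q}(\psi)=\mathbb{Q}(\psi^G)$).

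The main obstacle I anticipate is double-counting control: a non-linear $\chi$ of $G$ lying over $Z(G)$ may contain several of the $p+1$ minimal subgroups $K<Z(G)$ in its kernel, so summing the $\nl(G/K)$-contributions over all $K$ naively overcounts. I expect the cleanest fix is to observe that for $\chi\in\nl(G|Z(G))$ the intersection $\ker(\chi)\cap Z(G)$ is a well-defined nontrivial subgroup of $Z(G)$, and since $\chi$ has degree $p$ and $G/\ker(\chi)$ has nilpotency class $3$ with center of order $p$, this intersection is forced to be exactly of order $p$; thus each such $\chi$ corresponds to a unique $K$, and the union over $K$ is disjoint. (The degree-$p^2$ characters have $Z(G)\cap\ker(\chi)=1$ and are handled by the Camina-pair analysis of Lemma \ref{lemma:characterPhi7Phi8Phi10}, but those do not arise in $\Phi_6$ since $\cd(G)=\{1,p\}$ there — so in fact only the first three parts occur and the bookkeeping closes.) Once disjointness is secured, the rest is the routine Perlis–Walker/Ayoub counting inside each abelian $C_{G/K}((G/K)')$, and the theorem follows.
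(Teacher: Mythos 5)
Your proposal is correct and follows essentially the same route as the paper: split by kernel type, obtain $\mathbb{Q}(G/G')$ from the linear characters, a single $M_p(\mathbb{Q}(\zeta_p))$ from the faithful characters of $G/Z(G)\cong\Phi_2(1^3)$, and the per-$K$ contributions via the bijection of Lemma \ref{lemma:characterPhi6}(2) with the nilpotency-class-$3$ quotients $G/K$ of order $p^4$ (where the paper cites \cite[Theorem 3]{Ram}, you re-derive the Ayoub counting on the unique abelian index-$p$ subgroup). One sentence opening your third paragraph is stated backwards --- the substantive part concerns $\chi\in\nl(G|Z(G))$, i.e.\ $Z(G)\nsubseteq\ker(\chi)$ with $\ker(\chi)\cap Z(G)=K$ of order exactly $p$, not characters whose kernel contains $Z(G)$, and for $\Phi_6$ one has $\cd(G)=\{1,p\}$ rather than $\{1,p,p^2\}$ --- but the argument you actually carry out (lifts from $G/K$, disjointness over the $p+1$ choices of $K$) is the correct one and matches the paper; your explicit disjointness check is a valid justification of the bijection the paper takes directly from Lemma \ref{lemma:characterPhi6}(2).
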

\begin{proof}
		Let $G$ be a group of order $p^5$ such that $G \in \Phi_6$. By using an analogue of the argument given in the proof of Theorem \ref{thm:rationalgroupalgebraPhi4}, we conclude that the simple components of the Wedderburn decomposition of $\mathbb{Q}G$ corresponding to all irreducible $\mathbb{Q}$-representations of $G$ whose kernels contain $G'$ are isomorphic to $\mathbb{Q}(G/G')$.\\
	    Further, $C_p \times C_p \cong Z(G) < G' \cong C_p \times C_p \times C_p$ and $G/Z(G) \cong \Phi_2(1^3)$ (see \cite[Subsection 4.1]{RJ}).  Hence,  
	    $$\bigoplus M_{p}(\mathbb{Q}(\zeta_p))$$
       is the only simple component of the Wedderburn decomposition of $\mathbb{Q}G$ corresponding to the irreducible $\mathbb{Q}$-representation of $G$ affording the character which is the sum of the Galois conjugates of those non-linear irreducible complex characters of $G$ whose kernels contain $Z(G)$. Further, from Lemma \ref{lemma:characterPhi6}, there is a bijection between $\{\bar{\chi} \in \nl(G/K | Z(G)/K) : C_p \cong K < Z(G)\}$ and $\nl(G|Z(G))$, where $\bar{\chi}$ lifts to $\chi \in \nl(G)$. Observe that $(G/K)'\cong C_p \times C_p$. Thus, $G/K$ is a non-abelian $p$-group of order $p^4$ of nilpotency class $3$. Hence, $G/K$ has a unique abelian subgroup $C_{G/K}((G/K)')$ of index $p$ (see \cite[Corollary 55]{Ram}).\\
       Next, fix a subgroup $K$ such that $C_p \cong K < Z(G)$. One can easily see that, there is a bijection between $\nl(G/K | Z(G)/K)$ and $\{\chi \in \nl(G|Z(G)) : K \subseteq \ker(\chi)\}$. Hence, from \cite[Theorem 3]{Ram}, the simple components of the Wedderburn decomposition of $\mathbb{Q}G$ corresponding to all irreducible $\mathbb{Q}$-representations of $G$ that afford the characters $\Omega(\chi)$ for some $\chi \in \nl(G|Z(G))$ such that  $K \subseteq \ker(\chi)$ contribute
	   $$\bigoplus_{\substack{d_K|m_K\\ d_K \nmid m_K'}} \frac{a_{d_K}}{p}M_p(\mathbb{Q}(\zeta_{d_K})) \bigoplus_{\substack{d_K|m_K\\ d_K|m_K'}}\frac{a_{d_K}-a_{d_K}'}{p}M_p(\mathbb{Q}(\zeta_{d_K}))$$
   	   in $\mathbb{Q}G$, where $m_K$ and $m_K'$ are the exponents of $C_{G/K}((G/K)')$ and $C_{G/K}((G/K)')/(Z(G)/K)$, and $a_{d_K}$ and $a_{d_K}'$ are the number of cyclic subgroups of order $d_K$ of $C_{G/K}((G/K)')$ and $C_{G/K}((G/K)')/(Z(G)/K)$, respectively. Therefore, the proof follows.
\end{proof}

 Before describing the Wedderburn decomposition of rational group algebras of the groups of order $p^5$ belonging to the rest of the isoclinic families, we state Reiner's result. 
 
 \begin{lemma}\cite[Theorem 3]{IR} \label{Reiner}
 	Let $\mathbb{K}$ be an arbitrary field with characteristic zero and  $\mathbb{K}^*$ be the algebraic closure of $\mathbb{K}$. Suppose $T$ is an irreducible $\mathbb{K}$-representation of $G$, and extend $T$ (by linearity) to a $\mathbb{K}$-representation of $\mathbb{K}G$. Set
 	\[A=\{T(x) : x\in \mathbb{K}G\}.\]
 	Then $A$ is a simple algebra over $\mathbb{K}$, and we may write  $A= M_{n}(D)$, where $D$ is a division ring. Further,
 	\begin{center}
 		$Z(D)\cong \mathbb{K}(\chi_i)$ and $[D : Z(D)]=(m_\mathbb{K}(\chi_i))^2~(1\leq i \leq k),$
 	\end{center}
 	where $U_i$ are irreducible $K^{*}$-representations of $G$ that affords the character $\chi_i$, $T=m_{\mathbb{K}}(\chi_i)\bigoplus_{i=1}^k U_i$, $k=[\mathbb{K}(\chi_i) : \mathbb{K}]$ and $m_{\mathbb{K}}(\chi_i)$ is the Schur index of $\chi_i$ over $\mathbb{K}$.
 \end{lemma}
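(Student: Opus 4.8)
The plan is to derive Reiner's statement from the classical Wedderburn theory of the semisimple algebra $\mathbb{K}G$ together with Galois descent to the algebraic closure $\mathbb{K}^*$; beyond Maschke's theorem, the Artin--Wedderburn theorem, and elementary Galois theory, nothing is needed. First I would establish that $A$ is simple. Since $\mathbb{K}$ has characteristic zero, $\mathbb{K}G$ is semisimple, say $\mathbb{K}G=\bigoplus_j B_j$ with the $B_j$ simple two-sided ideals. The module $V$ afforded by $T$ is simple, so $B_j V=0$ for every $j$ except one, say $B_{j_0}$; hence $T(B_j)=0$ for $j\neq j_0$, $A=T(\mathbb{K}G)=T(B_{j_0})$, and $T|_{B_{j_0}}\colon B_{j_0}\to A$ is a nonzero homomorphism of simple algebras, thus an isomorphism. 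By Artin--Wedderburn, $A\cong M_n(D)$ for some finite-dimensional $\mathbb{K}$-division algebra $D$; put $F=Z(D)=Z(A)$.

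Next I would base change to $\mathbb{K}^*$. As $\mathbb{K}^*G\cong\bigoplus_{\chi\in\Irr(G)}M_{\chi(1)}(\mathbb{K}^*)$, the semisimple $\mathbb{K}^*$-algebra $A\otimes_{\mathbb{K}}\mathbb{K}^*$ is a direct sum $\bigoplus_{\chi\in S}M_{\chi(1)}(\mathbb{K}^*)$, where $S$ is the set of absolutely irreducible constituents of $T\otimes_{\mathbb{K}}\mathbb{K}^*$. Because $A$ is simple, $F=Z(A)$ is a field; being separable over $\mathbb{K}$ (characteristic zero), $F\otimes_{\mathbb{K}}\mathbb{K}^*$ is a product of $[F:\mathbb{K}]$ copies of $\mathbb{K}^*$ on which $\gal(\mathbb{K}^*/\mathbb{K})$ permutes the factors transitively. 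Identifying this algebra with $Z(A\otimes_{\mathbb{K}}\mathbb{K}^*)=\prod_{\chi\in S}\mathbb{K}^*$ shows that $S$ is a single Galois orbit, so $S=\{\chi_1,\dots,\chi_k\}$ with $k=[\mathbb{K}(\chi_1):\mathbb{K}]$ (the size of the orbit of $\chi_1$). A semilinear element $\sigma\in\gal(\mathbb{K}^*/\mathbb{K})$ carries the $\chi_i$-isotypic part of $T\otimes_{\mathbb{K}}\mathbb{K}^*$ isomorphically onto the $\chi_i^\sigma$-isotypic part, so all $\chi_i$ appear with one common multiplicity $m$; by the standard description of the Schur index (the same one behind the definition of $\Omega(\chi)$), $m=m_{\mathbb{K}}(\chi_i)$, giving $T\otimes_{\mathbb{K}}\mathbb{K}^*=m_{\mathbb{K}}(\chi_1)\bigoplus_{i=1}^{k}U_i$.

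It then remains to identify $F$ and $[D:F]$. For the center, the projection $Z(\mathbb{K}G)\twoheadrightarrow Z(A)=F$ sends each class sum to the corresponding central-character value, and composing with an embedding $F\hookrightarrow\mathbb{K}^*$ recovers $\omega_{\chi_i}$; since $\chi_i(1)$ is invertible in $\mathbb{K}$, the subfield of $\mathbb{K}^*$ generated by these values is exactly $\mathbb{K}(\chi_i)$, and the $k$ embeddings $F\hookrightarrow\mathbb{K}^*$ are permuted by the Galois group exactly as the $\chi_i$ are, so descent gives $F\cong\mathbb{K}(\chi_1)$. For the index, using $F\otimes_{\mathbb{K}}\mathbb{K}^*\cong\prod_{i=1}^{k}\mathbb{K}^*$ and that $\mathbb{K}^*$ splits $D$, I get $A\otimes_{\mathbb{K}}\mathbb{K}^*\cong M_n\!\big(D\otimes_F(F\otimes_{\mathbb{K}}\mathbb{K}^*)\big)\cong\prod_{i=1}^{k}M_n\!\big(D\otimes_F\mathbb{K}^*\big)\cong\prod_{i=1}^{k}M_{nd}(\mathbb{K}^*)$ with $d=\sqrt{[D:F]}$; matching the $i$-th block with $M_{\chi_i(1)}(\mathbb{K}^*)$ gives $\chi_i(1)=nd$, and comparing $\mathbb{K}^*$-dimensions of the unique simple $A$-module before and after base change (its $\mathbb{K}$-dimension is $n[D:\mathbb{K}]=nd^2k$, and afterwards it is a sum of the $k$ simple modules of dimension $nd$, each with multiplicity $m$) forces $m=d$. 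Hence $[D:Z(D)]=[D:F]=d^2=m_{\mathbb{K}}(\chi_i)^2$, which finishes the proof.

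The step I expect to be most delicate is the Galois bookkeeping: checking that $A\otimes_{\mathbb{K}}\mathbb{K}^*$ corresponds to exactly one Galois orbit of irreducible characters (not several), that the multiplicity of those characters is orbit-constant and coincides with the Schur index, and that the center descends to the character field rather than to a larger or smaller extension. The purely ring-theoretic ingredients --- simplicity of $A$, Artin--Wedderburn, and the splitting $D\otimes_F\mathbb{K}^*\cong M_d(\mathbb{K}^*)$ --- are routine.
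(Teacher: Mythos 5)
This lemma is quoted in the paper directly from Reiner (\cite[Theorem 3]{IR}) and no proof is given there, so there is nothing internal to compare against; your blind derivation is the standard one and is correct. The three steps --- simplicity of $A$ via the isotypic decomposition of $\mathbb{K}G$, the Galois-descent identification of $Z(A)\otimes_{\mathbb{K}}\mathbb{K}^*$ with $\prod_{i}\mathbb{K}^*$ forcing the constituents of $T\otimes_{\mathbb{K}}\mathbb{K}^*$ to form a single Galois orbit with constant multiplicity, and the dimension count $nd^2k=kmnd$ giving $d=m$ --- are exactly the classical argument; the only point to keep honest is that the identification of the common multiplicity $m$ with $m_{\mathbb{K}}(\chi_i)$ is either taken as the definition of the Schur index or supplied by the decomposition $T=m_{\mathbb{K}}(\chi_i)\bigoplus_{i=1}^{k}U_i$ that the statement itself posits, so no gap remains.
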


Now, we are ready to prove Theorem \ref{thm:rationalgroupalgebraPhi9}, which gives a combinatorial formulation for the Wedderburn decomposition of the rational group algebras of groups of order $p^5$ that belong to $\Phi_9$.
\begin{theorem}\label{thm:rationalgroupalgebraPhi9}
		Let $G$ be a group of order $p^5$ such that $G \in \Phi_9$. Let $m$ and $m'$ denote the exponents of $C_G(G')$ and $C_G(G')/G'$ respectively. Then the Wedderburn decomposition of $\mathbb{Q}G$ is as follows:
	\[\mathbb{Q}G\cong\bigoplus\mathbb{Q}(G/G')\bigoplus_{d|m, d \nmid m'} \frac{a_{d}}{p}M_p(\mathbb{Q}(\zeta_{d}))\bigoplus_{d|m, d|m'}\frac{a_{d}-a_{d}'}{p}M_p(\mathbb{Q}(\zeta_{d})),\]
	where $a_{d}$ and $a_{d}'$ are the number of cyclic subgroups of order $d$ of $C_G(G')$ and $C_G(G')/G'$, respectively.
\end{theorem}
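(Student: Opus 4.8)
The plan is to decompose $\mathbb{Q}G$ into the block coming from linear characters and the block coming from non-linear ones, in the spirit of the proof of Theorem~\ref{thm:rationalgroupalgebraPhi6}; the simplification here is that the relevant unique abelian subgroup already sits inside $G$, so no quotient is needed. Throughout write $C := C_G(G')$, which by Lemma~\ref{lemma:uniqueabelianPhi3Phi9} is the unique abelian subgroup of $G$ of index $p$; since $G/C$ is abelian we get $G' \subseteq C$, and $|C/G'| = p$.

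For the linear part, lifting is a bijection $\Irr(G/G') \to \lin(G)$, so the simple components of $\mathbb{Q}G$ attached to irreducible $\mathbb{Q}$-representations whose kernel contains $G'$ assemble, via Theorem~\ref{Perlis-walker}, into the Wedderburn decomposition of $\mathbb{Q}[G/G']$, i.e.\ into $\bigoplus\mathbb{Q}(G/G')$. For the non-linear part I first pin down the shape of each component: if $\chi\in\nl(G)$ then $\chi(1)=p$ by Lemma~\ref{lemma:characterPhi3Phi9} and $m_{\mathbb{Q}}(\chi)=1$ by Lemma~\ref{lemma:schurindexpgroup}, so Lemma~\ref{Reiner} (with $\mathbb{K}=\mathbb{Q}$) gives the simple component attached to $\Omega(\chi)$ the form $M_n(D)$ with $D$ a division ring, $Z(D)\cong\mathbb{Q}(\chi)$ and $[D:Z(D)]=m_{\mathbb{Q}}(\chi)^2=1$; hence $D=\mathbb{Q}(\chi)$, and comparing the $\mathbb{Q}$-dimension of the simple module, namely $n[\mathbb{Q}(\chi):\mathbb{Q}]$, with the degree $[\mathbb{Q}(\chi):\mathbb{Q}]\,\chi(1)$ of the representation affording $\Omega(\chi)$ forces $n=\chi(1)=p$. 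Thus each Galois conjugacy class $[\chi]\subseteq\nl(G)$ contributes exactly one copy of $M_p(\mathbb{Q}(\chi))$, and it remains to count, for each $d$, the classes $[\chi]$ with $\mathbb{Q}(\chi)=\mathbb{Q}(\zeta_d)$.

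The counting is the heart of the argument. By Lemma~\ref{lemma:characterPhi3Phi9}, induction $\psi\mapsto\psi^G$ is a surjection $\Irr(C\mid G')\twoheadrightarrow\nl(G)$ whose fibres are exactly the $G$-orbits, each of size $[G:C]=p$; and by Theorem~\ref{thm:reqpairPhi9} the pair $(C,\psi)$ is a required pair, so $\mathbb{Q}(\psi^G)=\mathbb{Q}(\psi)$ for every such $\psi$. Since conjugation by a group element only permutes the values of a linear character of $C$, the field $\mathbb{Q}(\psi)$ is constant along each fibre; hence for fixed $d$ the set $\{\psi\in\Irr(C\mid G'):\mathbb{Q}(\psi)=\mathbb{Q}(\zeta_d)\}$ is precisely the preimage of $\{\chi\in\nl(G):\mathbb{Q}(\chi)=\mathbb{Q}(\zeta_d)\}$. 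Each such Galois class $[\chi]$ has $\phi(d)=[\mathbb{Q}(\zeta_d):\mathbb{Q}]$ members, each with $p$ preimages, so the number of classes $[\chi]$ with field $\mathbb{Q}(\zeta_d)$ is $\tfrac{1}{p\phi(d)}$ times the number of $\psi\in\Irr(C\mid G')$ with $\mathbb{Q}(\psi)=\mathbb{Q}(\zeta_d)$. Now Lemma~\ref{lemma:Ayoub} applied to the abelian group $C$ (resp.\ to $C/G'$) shows there are $a_d$ (resp.\ $a_d'$) Galois classes of characters with value field $\mathbb{Q}(\zeta_d)$, i.e.\ $a_d\phi(d)$ (resp.\ $a_d'\phi(d)$) such characters, the $C/G'$-ones being exactly those in $\Irr(C)\setminus\Irr(C\mid G')$ with that field; subtracting, $\Irr(C\mid G')$ contains $(a_d-a_d')\phi(d)$ characters with field $\mathbb{Q}(\zeta_d)$, with $a_d'=0$ unless $d\mid m'$. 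Dividing by $p\phi(d)$ gives $\tfrac{a_d}{p}$ copies of $M_p(\mathbb{Q}(\zeta_d))$ when $d\nmid m'$ and $\tfrac{a_d-a_d'}{p}$ copies when $d\mid m'$; adding the linear part yields the stated decomposition. (Dimension check: $p^2 + p^2|\nl(G)| = p^2 + p^2(p^3-1) = p^5 = |G|$.)

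The main obstacle is exactly this last counting step: one must track value fields coherently in both directions across the induction $\psi\mapsto\psi^G$ — showing the fibres have constant field and that fields are genuinely preserved, not merely contained, which is where Theorem~\ref{thm:reqpairPhi9} is essential — so that the factor $\tfrac1p$ drops out cleanly, and one must correctly isolate the characters that factor through $C/G'$ so that $a_d$ becomes $a_d-a_d'$ precisely when $d\mid m'$. Everything else (the linear block, and the shape $M_p(\mathbb{Q}(\chi))$ of a non-linear block via Reiner's Lemma~\ref{Reiner}, Lemma~\ref{lemma:schurindexpgroup} and Lemma~\ref{lemma:characterPhi3Phi9}) is routine.
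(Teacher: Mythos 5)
Your proposal is correct and follows essentially the same route as the paper: the linear block via Perlis--Walker, the shape $M_p(\mathbb{Q}(\chi))$ of each non-linear block via Lemma \ref{lemma:schurindexpgroup} and Reiner's Lemma \ref{Reiner}, and the count of Galois classes with field $\mathbb{Q}(\zeta_d)$ obtained by pulling back along the $p$-to-one induction map $\Irr(C_G(G')\mid G')\to\nl(G)$ (field-preserving by Theorem \ref{thm:reqpairPhi9}) and applying Lemma \ref{lemma:Ayoub} to $C_G(G')$ and $C_G(G')/G'$. Your write-up is somewhat more explicit than the paper's about why the value field is constant on fibres and how the $\phi(d)$ factors cancel, but the argument is the same.
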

\begin{proof}
	Let $G$ be a group of order $p^5$ such that $G \in \Phi_9$, and let $\chi\in \nl(G)$. Then from Lemma \ref{lemma:characterPhi3Phi9}, there exists $\psi \in \Irr(C_G(G') | G')$ such that $\chi = \psi^G$, and if $\psi \in \Irr(C_G(G') | G')$ then $\psi^G \in \nl(G)$. Now by Theorem \ref{thm:reqpairPhi9}, $\mathbb{Q}(\chi)=\mathbb{Q}(\psi)$. Observe that $\chi^{\sigma}=(\psi^{\sigma})^G$, where $\sigma \in \gal(\mathbb{Q}(\chi)/\mathbb{Q})$ and there are exactly $p$ distinct conjugates $\psi\in \Irr(C_G(G') | G{}')$ such that $\chi=\psi^G$ (see the proof of Lemma \ref{lemma:characterPhi3Phi9}). Let $X$ and $Y$ be the representative set of distinct Galois conjugacy classes of $\Irr(G)$ and $\Irr(C_G(G'))$, respectively. Let $d$ be a divisor of $\exp(C_G(G'))$ such that $\mathbb{Q}(\chi)=\mathbb{Q}(\psi) = \mathbb{Q}(\zeta_d)$. Set $m=\exp(C_G(G'))$ and $m{}'=\exp(C_G(G')/G{}')$. Then we have two cases.\\
	{\bf Case 1 ($d \mid m$ but $d \nmid m'$).} In this case, we have
	\begin{align*}
		|\{\chi \in X : \chi(1)=p, \mathbb{Q}(\chi)=\mathbb{Q}(\zeta_d) \}|&=\frac{1}{p}|\{\psi \in Y : \psi\in \Irr(C_G(G')|G{}'), ~\mathbb{Q}(\psi)=\mathbb{Q}(\zeta_d) \}|\\
		&= \frac{a_d}{p}, 
	\end{align*}
	where $a_d$ denotes the number of cyclic subgroups of order $d$ of $C_G(G')$ (see Lemma \ref{lemma:Ayoub}). \\
	{\bf Case 2 ($d \mid m$ and $d \mid m'$).} In this case, we have
	\begin{align*}
		|\{\chi \in X : \chi(1)=p, \mathbb{Q}(\chi)=\mathbb{Q}(\zeta_d) \}|&=\frac{1}{p}|\{\psi \in Y : \psi\in \Irr(C_G(G')|G{}'), ~\mathbb{Q}(\psi)=\mathbb{Q}(\zeta_d) \}|\\
		&= \frac{a_d-a'_d}{p}, 
	\end{align*}
	where $a_d$ and $a'_d$ denote the number of cyclic subgroups of order $d$ of $C_G(G')$ and $C_G(G')/G{}'$, respectively (see Lemma \ref{lemma:Ayoub}).\\
	Now, let $A_{\mathbb{Q}}(\chi)$ be the simple component of the Wedderburn decomposition of $\mathbb{Q}G$ corresponding to rational representation of $G$ that affords the character $\Omega(\chi)$. Then $A_{\mathbb{Q}}(\chi) \cong M_n(D)$ for some $n \in \mathbb{N}$ and a division ring $D$. From Lemma \ref{lemma:schurindexpgroup}, $m_{\mathbb{Q}}(\chi)=1$. Hence, from Lemma \ref{Reiner}, $D = Z(D) \cong \mathbb{Q}(\chi)$. Furthermore, as $m_{\mathbb{Q}}(\chi)=1$, we get $n=\chi(1)=p$ (see \cite[Theorem 3.3.1]{JR}). Therefore, all the irreducible rational representations of $G$ whose kernel do not contains $G{}'$ will contribute
	$$\bigoplus_{d|m, d \nmid m'} \frac{a_{d}}{p}M_p(\mathbb{Q}(\zeta_{d}))\bigoplus_{d|m, d|m'}\frac{a_{d}-a_{d}'}{p}M_p(\mathbb{Q}(\zeta_{d}))$$
	in the Wedderburn decomposition of $\mathbb{Q}G$. This completes the proof of Theorem \ref{thm:rationalgroupalgebraPhi9}.
\end{proof}

Next, we prove Theorem \ref{thm:rationalgroupalgebraPhi3}, which characterizes the Wedderburn decomposition of the rational group algebras associated with the groups of order $p^5$ that belong to $\Phi_3$.
\begin{theorem}\label{thm:rationalgroupalgebraPhi3}
	Let $G$ be a group of order $p^5$ ($p \geq 5$) such that $G \in \Phi_3$. Then we have the following.
	\begin{enumerate}
		\item For $G \in \Phi_3 \setminus \{\Phi_3(311)b_r~(r=1, \nu), \Phi_3(221)a, \Phi_3(2111)e \}$, let $m$ and $m'$ denote the exponents of $C_G(G')$ and $C_G(G')/G'$ respectively. Then the Wedderburn decomposition of $\mathbb{Q}G$ is as follows:
		$$\mathbb{Q}G\cong\bigoplus\mathbb{Q}(G/G')\bigoplus_{d|m, d \nmid m'} \frac{a_{d}}{p}M_p(\mathbb{Q}(\zeta_{d}))\bigoplus_{d|m, d|m'}\frac{a_{d}-a_{d}'}{p}M_p(\mathbb{Q}(\zeta_{d})),$$
		where $a_{d}$ and $a_{d}'$ are the number of cyclic subgroups of order $d$ of $C_G(G')$ and $C_G(G')/G'$, respectively.
		
\item  For $G=\Phi_3(311)b_r$, where $r=1, \nu$, the Wedderburn decomposition of $\mathbb{Q}G$ is as follows:
		$$\mathbb{Q}G \cong \mathbb{Q} \bigoplus(p+1)\mathbb{Q}(\zeta_p) \bigoplus p\mathbb{Q}(\zeta_{p^2}) \bigoplus pM_p(\mathbb{Q}(\zeta_p)) \bigoplus M_p(\mathbb{Q}(\zeta_{p^3})).$$
		
			\item For $G \in \{\Phi_3(221)a, \Phi_3(2111)e \}$, the Wedderburn decomposition of $\mathbb{Q}G$ is as follows:
		$$\mathbb{Q}G \cong \mathbb{Q} \bigoplus(p+1)\mathbb{Q}(\zeta_p) \bigoplus p\mathbb{Q}(\zeta_{p^2}) \bigoplus 2pM_p(\mathbb{Q}(\zeta_p)) \bigoplus (p-1)M_p(\mathbb{Q}(\zeta_{p^2})).$$
			\end{enumerate}
\end{theorem}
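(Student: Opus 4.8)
The plan is to produce the decomposition one Galois-conjugacy class at a time. Since $G$ is a $p$-group with $p$ odd, Lemma~\ref{lemma:schurindexpgroup} gives $m_{\mathbb{Q}}(\chi)=1$ for all $\chi\in\Irr(G)$, so by Lemma~\ref{Reiner} the simple component of $\mathbb{Q}G$ carrying $\Omega(\chi)$ is $M_{\chi(1)}(\mathbb{Q}(\chi))$, and Lemma~\ref{lemma:characterPhi3Phi9}(1) gives $\chi(1)\in\{1,p\}$. Everything thus reduces to counting, for each $d$, the number of Galois-conjugacy classes of irreducible characters of a given degree whose character field is $\mathbb{Q}(\zeta_d)$. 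The linear characters biject with $\Irr(G/G')$, and in each group of parts (2)--(3) one reads off $G/G'\cong C_{p^2}\times C_p$ from the presentation, so Theorem~\ref{Perlis-walker} supplies the block $\mathbb{Q}\oplus(p+1)\mathbb{Q}(\zeta_p)\oplus p\,\mathbb{Q}(\zeta_{p^2})$ (and for part (1) the corresponding Perlis--Walker block sits inside $\mathbb{Q}(G/G')$). It then remains to treat $\nl(G)$, where every character has degree $p$.

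For part (1), the argument is identical to the proof of Theorem~\ref{thm:rationalgroupalgebraPhi9}. By Theorem~\ref{thm:reqpairPhi3}(1) every $\chi\in\nl(G)$ is $\psi^G$ with $\psi\in\Irr(C_G(G')\mid G')$ and $\mathbb{Q}(\chi)=\mathbb{Q}(\psi)$, and (as in the proof of Lemma~\ref{lemma:characterPhi3Phi9}) each $\chi$ arises from exactly $p$ of the $\psi$'s, forming a single $G$-orbit. Applying Lemma~\ref{lemma:Ayoub} to the abelian group $C_G(G')$ turns the count of Galois classes of $\chi$ with field $\mathbb{Q}(\zeta_d)$ into $\tfrac1p a_d$ when $d\mid m,\ d\nmid m'$ and into $\tfrac1p(a_d-a_d')$ when $d\mid m,\ d\mid m'$, where $m=\exp(C_G(G'))$, $m'=\exp(C_G(G')/G')$ and $a_d,a_d'$ count cyclic subgroups of $C_G(G')$ and of $C_G(G')/G'$ of order $d$; adding the linear block gives the stated formula.

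For $G=\Phi_3(311)b_r$ the map $\psi\mapsto\psi^G$ still preserves the field on the part of $\nl(G)$ with $\mathbb{Q}(\chi)=\mathbb{Q}(\zeta_{p^3})$ (Theorem~\ref{thm:reqpairPhi3}(3)(a)); there are $p^3-p^2$ such characters by the proof of Theorem~\ref{thm:reqpairPhi3}(3), and as $[\mathbb{Q}(\zeta_{p^3}):\mathbb{Q}]=p^3-p^2$ they form one Galois class, contributing a single $M_p(\mathbb{Q}(\zeta_{p^3}))$. The remaining $|\nl(G)|-(p^3-p^2)=p^2-p$ non-linear characters are lifts of $\nl(G/K)$ for $K=\langle\alpha_1^{p^2}\rangle$ (Theorem~\ref{thm:reqpairPhi3}(3)(b)), and $G/K$ is a VZ $p$-group of order $p^4$ with elementary abelian centre, so each of them has field $\mathbb{Q}(\zeta_p)$; that is $p$ Galois classes, hence $p\,M_p(\mathbb{Q}(\zeta_p))$, and (2) follows. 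For $G\in\{\Phi_3(221)a,\Phi_3(2111)e\}$ one has $Z(G)\cong C_p\times C_p$, so by Lemma~\ref{lemma:centernon-cyclic} every $\chi\in\nl(G)$ is a lift: those with $Z(G)\subseteq\ker\chi$ give $\nl(G/Z(G))$ with $G/Z(G)\cong\Phi_2(1^3)$, contributing one $M_p(\mathbb{Q}(\zeta_p))$; the rest are lifts from the $p+1$ quotients $G/K$ with $C_p\cong K<Z(G)$, each of order $p^4$ (one, with $K=\gamma_3(G)$, a VZ group handled by Theorem~\ref{lemma:Wedderburn VZ}, the other $p$ of nilpotency class $3$ handled by \cite[Theorem~3]{Ram}). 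Assembling the field counts over these $p+1$ quotients via Lemma~\ref{lemma:Ayoub} yields $(2p-1)M_p(\mathbb{Q}(\zeta_p))\oplus(p-1)M_p(\mathbb{Q}(\zeta_{p^2}))$ for the noncentral part, which together with the two earlier blocks is (3), the two groups being treated in parallel.

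The main obstacle is this last step: one must know precisely when inducing from $C_G(G')$ (or from $C_{G/K}((G/K)')$) collapses the character field, and then keep the field bookkeeping consistent across the $p+1$ quotients $G/K$ — identifying which is of class $2$ and which of class $3$, and computing the exponents of $Z(G/K)$ (respectively of $C_{G/K}((G/K)')$ and $C_{G/K}((G/K)')/(Z(G)/K)$) — since a single exponent-$p^2$ slip would introduce spurious $M_p(\mathbb{Q}(\zeta_{p^2}))$ or $M_p(\mathbb{Q}(\zeta_{p^3}))$ summands. Once those group-theoretic facts are pinned down, each remaining step is a routine application of the inputs already available (Theorems~\ref{Perlis-walker}, \ref{lemma:Wedderburn VZ}, \ref{thm:reqpairPhi3}, \ref{thm:rationalgroupalgebraPhi9} and Lemmas~\ref{lemma:schurindexpgroup}, \ref{lemma:Ayoub}, \ref{Reiner}, \ref{lemma:characterPhi3Phi9}, \ref{lemma:centernon-cyclic}); a sanity check is that in every case the $\mathbb{Q}$-dimensions of the listed summands telescope to $p^5$.
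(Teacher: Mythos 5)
Your proposal is correct and follows essentially the same route as the paper: the linear block via Perlis--Walker, part (1) by mirroring the proof of Theorem \ref{thm:rationalgroupalgebraPhi9} using Theorem \ref{thm:reqpairPhi3}(1) and Lemma \ref{lemma:Ayoub}, part (2) by splitting $\nl(G)$ into the single $\mathbb{Q}(\zeta_{p^3})$ Galois class and the lifts from the VZ quotient $G/\langle\alpha_1^{p^2}\rangle$, and part (3) by separating the characters killed by $Z(G)$ from the lifts along the $p+1$ quotients $G/K$. Your field bookkeeping (one class-2 quotient versus $p$ class-3 quotients, yielding $(2p-1)M_p(\mathbb{Q}(\zeta_p))\oplus(p-1)M_p(\mathbb{Q}(\zeta_{p^2}))$ for the noncentral part) matches the paper's computation, which is likewise left at the level of an ``analogous argument.''
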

\begin{proof}
	\begin{enumerate}
		\item Using Lemma \ref{lemma:characterPhi3Phi9} and Theorem \ref{thm:reqpairPhi3}, the proof follows from a discussion similar to that in the proof of Theorem \ref{thm:rationalgroupalgebraPhi9}.
	
			\item Let 
		$G=\Phi_3(311)b_r= \langle \alpha, \alpha_1, \alpha_2, \alpha_3 : [\alpha_1, \alpha]=\alpha_2, [\alpha_2, \alpha]^r=\alpha_1^{p^2}=\alpha_3, \alpha^p=\alpha_2^p=\alpha_3^p=1\rangle$
		for $r=1, \nu$ (see \cite[Subsection 4.5]{RJ}). Here, $G'=\langle \alpha_2, \alpha_3 \rangle \cong C_p \times C_p$ and $Z(G)=\langle \alpha_1^p \rangle \cong C_{p^2}$. Since $G/G' \cong C_{p^2} \times C_p$, from Theorem \ref{Perlis-walker}, the simple components of the Wedderburn decomposition of $\mathbb{Q}G$ corresponding to all those irreducible $\mathbb{Q}$-representations of $G$ whose kernels contain $G'$ are
		$$ \mathbb{Q} \bigoplus(p+1)\mathbb{Q}(\zeta_p) \bigoplus p\mathbb{Q}(\zeta_{p^2}).$$
		Further, from Theorem \ref{thm:reqpairPhi3}(3)(a), $|\{\chi \in\nl(G) : \mathbb{Q}(\chi) = \mathbb{Q}(\zeta_{p^3})\}|=p^3-p^2$. Hence, these irreducible complex characters constitute a single Galois conjugacy class, and the corresponding simple component of the Wedderburn decomposition of $\mathbb{Q}G$ is
		$$M_p(\mathbb{Q}(\zeta_{p^3})).$$
		Furthermore, from Theorem \ref{thm:reqpairPhi3}(3)(b), there is a bijection between $\nl(G/K)$, where $K=\langle \alpha_3=\alpha_1^{p^2} \rangle <Z(G)$ and $\{\chi \in\nl(G) : \mathbb{Q}(\chi) \neq \mathbb{Q}(\zeta_{p^3})\}$. Observe that $G/K\cong \Phi_2(211)c$ is a VZ $p$-group of order $p^4$. Therefore, from Theorem \ref{lemma:Wedderburn VZ}, the corresponding simple component of the Wedderburn decomposition of $\mathbb{Q}G$ is
		$$\bigoplus pM_p(\mathbb{Q}(\zeta_p)).$$
		This completes the proof of Theorem \ref{thm:rationalgroupalgebraPhi3}(2).
		
		\item For $G=\Phi_3(221)a= \langle \alpha, \alpha_1, \alpha_2, \alpha_3 : [\alpha_1, \alpha]=\alpha_2, [\alpha_2, \alpha]=\alpha^p=\alpha_3, \alpha_1^{p^2}=\alpha_2^p=\alpha_3^p=1\rangle$, we have $G'=\langle \alpha_2, \alpha_3 \rangle \cong C_p \times C_p$ and $Z(G)=\langle \alpha_1^p, \alpha_3 \rangle \cong C_p \times C_p$. For $G=\Phi_3(2111)e= \langle \alpha, \alpha_1, \alpha_2, \alpha_3 : [\alpha_1, \alpha]=\alpha_2, [\alpha_2, \alpha]=\alpha_3, \alpha^p=\alpha_1^{p^2}=\alpha_2^p=\alpha_3^p=1\rangle$, we have $G'=\langle \alpha_2, \alpha_3 \rangle \cong C_p \times C_p$ and $Z(G)=\langle \alpha_1^p, \alpha_3 \rangle \cong C_p \times C_p$. One can easily see that for $G \in \{\Phi_3(221)a, \Phi_3(2111)e \}$, we have  $G/G' \cong C_{p^2} \times C_p$ and $G/Z(G) \cong \Phi_2(1^3)$ (see \cite[Subsection 4.1]{RJ}). Therefore, from Theorem \ref{Perlis-walker}, the simple components of the Wedderburn decomposition of $\mathbb{Q}G$ corresponding to all those irreducible $\mathbb{Q}$-representations of $G$ whose kernels contain $G'$ are
		$$ \mathbb{Q} \bigoplus(p+1)\mathbb{Q}(\zeta_p) \bigoplus p\mathbb{Q}(\zeta_{p^2}).$$
		Next, for $G \in \{\Phi_3(221)a, \Phi_3(2111)e \}$, there is a bijection between $\{\bar{\chi} \in \nl(G/K | Z(G)/K) : C_p \cong K < Z(G)\}$ and $\nl(G|Z(G))$, where $\bar{\chi}$ lifts to $\chi \in \nl(G)$ (see Theorem \ref{thm:reqpairPhi3}(2)). Note that for each proper subgroup $K$ of $Z(G)$, the quotient $G/K$ is a non-abelian $p$-group of order $p^4$. By employing an argument analogous to the one used in the proof of Theorem \ref{thm:rationalgroupalgebraPhi3}(2), together with Theorem \ref{lemma:Wedderburn VZ} and \cite[Theorem 3]{Ram}, we can compute the simple components of the Wedderburn decomposition of $\mathbb{Q}G$ corresponding to all those irreducible $\mathbb{Q}$-representations of $G$ whose kernels do not contain $Z(G)$. These components are given by
		$$\bigoplus (2p-1)M_p(\mathbb{Q}(\zeta_p)) \bigoplus (p-1)M_p(\mathbb{Q}(\zeta_{p^2})).$$
		Furthermore, all non-linear irreducible complex characters of $G$ whose kernels contain $Z(G)$ form a single Galois conjugacy class. The corresponding simple component of the Wedderburn decomposition of $\mathbb{Q}G$ is
		$$M_p(\mathbb{Q}(\zeta_{p})).$$
		By combining all these simple components of the Wedderburn decomposition of $\mathbb{Q}G$, we obtain the desired result, thereby completing the proof of Theorem \ref{thm:rationalgroupalgebraPhi3}(3). \qedhere
	\end{enumerate} 
\end{proof}

Theorem \ref{thm:rationalgroupalgebraPhi7Phi8} characterizes the Wedderburn decomposition of rational group algebras of all groups of order $p^5$ of $\Phi_7$ and $\Phi_8$.
\begin{theorem}\label{thm:rationalgroupalgebraPhi7Phi8}
	Let $G$ be a group of order $p^5$ such that $G \in \Phi_7 \cup \Phi_8$. Then the Wedderburn decomposition of $\mathbb{Q}G$ is as follows:
	\[\mathbb{Q}G \cong \bigoplus\mathbb{Q}(G/G') \bigoplus p M_p(\mathbb{Q}(\zeta_p)) \bigoplus M_{p^2}(\mathbb{Q}(\zeta_p)).\]
\end{theorem}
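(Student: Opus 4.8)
The plan is to partition $\Irr(G)$ by character degree. Since $\cd(G)=\{1,p,p^2\}$ by Lemma~\ref{lemma:characterPhi7Phi8Phi10}(1), every $\chi\in\Irr(G)$ is linear, of degree $p$, or of degree $p^2$; I would treat the three cases in turn, group the characters of each degree into Galois conjugacy classes over $\mathbb{Q}$, and read off the corresponding simple components. Throughout, $m_{\mathbb{Q}}(\chi)=1$ by Lemma~\ref{lemma:schurindexpgroup}, so by Lemma~\ref{Reiner} (together with \cite[Theorem 3.3.1]{JR} to identify the matrix size with the degree) the simple component attached to a Galois conjugacy class with character field $\mathbb{Q}(\zeta_d)$ and common degree $n$ is exactly $M_n(\mathbb{Q}(\zeta_d))$. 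For the linear characters this is immediate: lifting gives a bijection between $\lin(G)$ and $\Irr(G/G')$, so the simple components coming from the irreducible $\mathbb{Q}$-representations of $G$ whose kernel contains $G'$ are precisely those of $\mathbb{Q}(G/G')$, which yields the summand $\bigoplus\mathbb{Q}(G/G')$.

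For the characters of degree $p$, Lemma~\ref{lemma:characterPhi7Phi8Phi10}(2) gives a bijection (by lifting) between $\Irr^{(p)}(G)$ and $\nl(G/Z(G))$. Here $G/Z(G)$ is a VZ $p$-group of order $p^4$, isomorphic to $\Phi_2(1^4)$ when $G\in\Phi_7$ and to $\Phi_2(22)$ when $G\in\Phi_8$ (see \cite[Subsection 4.1]{RJ}), whose centre is elementary abelian of order $p^2$ and whose commutator subgroup has order $p$. Hence every $\bar\chi\in\nl(G/Z(G))$ is a character $\chi_\mu$ as in \eqref{VZ} with $\mu$ nontrivial on $(G/Z(G))'$, and $\mathbb{Q}(\bar\chi)=\mathbb{Q}(\mu)=\mathbb{Q}(\zeta_p)$ since $Z(G/Z(G))$ has exponent $p$; there are $|\nl(G/Z(G))|=p^2-p$ of them. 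Because lifting preserves degree and character field, $\Irr^{(p)}(G)$ consists of $p^2-p$ characters, all with field $\mathbb{Q}(\zeta_p)$, hence $(p^2-p)/(p-1)=p$ Galois conjugacy classes over $\mathbb{Q}$, each contributing $M_p(\mathbb{Q}(\zeta_p))$; this gives $\bigoplus pM_p(\mathbb{Q}(\zeta_p))$. (Equivalently, one may apply Theorem~\ref{lemma:Wedderburn VZ} directly to $G/Z(G)$.)

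For the characters of degree $p^2$, Lemma~\ref{lemma:characterPhi7Phi8Phi10}(3) says $(G,Z(G))$ is a Camina pair and $\chi_\mu\mapsto\mu$ is a bijection from $\Irr^{(p^2)}(G)$ onto $\Irr(Z(G))\setminus\{1_{Z(G)}\}$ with $\mathbb{Q}(\chi_\mu)=\mathbb{Q}(\mu)$, where $\chi_\mu$ is as in \eqref{Camina}. Since $|G/Z(G)|=p^4$ forces $|Z(G)|=p$, there are exactly $p-1$ such characters, all with field $\mathbb{Q}(\zeta_p)$, and $\gal(\mathbb{Q}(\zeta_p)/\mathbb{Q})$ acts transitively on $\Irr(C_p)\setminus\{1\}$; via the bijection, $\Irr^{(p^2)}(G)$ is therefore a single Galois conjugacy class over $\mathbb{Q}$, contributing the single component $M_{p^2}(\mathbb{Q}(\zeta_p))$. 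Adding the three contributions produces the claimed decomposition $\mathbb{Q}G\cong\bigoplus\mathbb{Q}(G/G')\bigoplus pM_p(\mathbb{Q}(\zeta_p))\bigoplus M_{p^2}(\mathbb{Q}(\zeta_p))$.

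The one genuinely delicate point is the middle step: one must verify that for both $G\in\Phi_7$ and $G\in\Phi_8$ the quotient $G/Z(G)$ is indeed a VZ $p$-group of order $p^4$ whose centre has exponent $p$, so that every degree-$p$ character of $G$ has character field exactly $\mathbb{Q}(\zeta_p)$ and the count $p\cdot M_p(\mathbb{Q}(\zeta_p))$ is valid uniformly across both families (this is where the explicit presentations of $\Phi_2(1^4)$ and $\Phi_2(22)$ enter). Everything else is a routine application of the Camina-pair bijection of Lemma~\ref{lemma:characterPhi7Phi8Phi10} and of Lemmas~\ref{lemma:schurindexpgroup} and~\ref{Reiner}.
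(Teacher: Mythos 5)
Your proposal is correct and follows essentially the same route as the paper: linear characters give $\mathbb{Q}(G/G')$, the degree-$p$ characters are handled as lifts from the VZ quotient $G/Z(G)\cong\Phi_2(1^4)$ or $\Phi_2(22)$ (the paper invokes Theorem~\ref{lemma:Wedderburn VZ} where you count Galois classes directly, but you note the equivalence yourself), and the degree-$p^2$ characters form a single Galois class via the Camina-pair bijection with $\Irr(Z(G))\setminus\{1_{Z(G)}\}$ and $Z(G)\cong C_p$. No gaps.
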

\begin{proof}
	Let $G$ be a group of order $p^5$ such that $G \in \Phi_7 \cup \Phi_8$. Then $\cd(G)=\{1, p, p^2\}$ and each non-linear irreducible complex character of degree $p$ of $G$ is the lift of some non-linear irreducible complex character of $G/Z(G)$ (see Lemma \ref{lemma:characterPhi7Phi8Phi10}). Further, if $G \in \Phi_{7}$ then $G/Z(G) \cong \Phi_2(1^4)$ (see \cite[Subsection 4.1]{RJ}), which is a VZ $p$-group of order $p^4$. Observe that the center of $\Phi_2(1^4)$ is isomorphic to $C_p \times C_p$. Similarly, if $G \in \Phi_{8}$ then $G/Z(G) \cong \Phi_2(22)$ (see \cite[Subsection 4.1]{RJ}), which is again a VZ $p$-group of order $p^4$. Observe that the center of $\Phi_2(22)$ is again isomorphic to $C_p \times C_p$. Hence, from Theorem \ref{lemma:Wedderburn VZ}, the simple components of the Wedderburn decomposition of $\mathbb{Q}G$ corresponding to all the irreducible $\mathbb{Q}$-representations of $G$ whose characters are sum of Galois conjugates of a degree $p$ complex irreducible character contribute  
	$$\bigoplus p M_p(\mathbb{Q}(\zeta_p))$$
	in the Wedderburn decomposition of $\mathbb{Q}G$. Furthermore, the pair $(G, Z(G))$ is a Camina pair and there is a bijection between the sets $\Irr^{(p^2)}(G)$ and $\Irr(Z(G)) \setminus \{1_{Z(G)}\}$ (see Lemma \ref{lemma:characterPhi7Phi8Phi10}). Since $Z(G) \cong C_p$, from Lemma \ref{Reiner}, 
	\[\bigoplus M_{p^2}(\mathbb{Q}(\zeta_p))\]
	is the only simple component of the Wedderburn decomposition of $\mathbb{Q}G$ corresponding to the irreducible $\mathbb{Q}$-representation of $G$ whose character is the sum of Galois conjugates of a degree $p^2$ complex irreducible character. This completes the proof of Theorem \ref{thm:rationalgroupalgebraPhi7Phi8}.
\end{proof}

 Theorem \ref{thm:rationalgroupalgebraPhi10} gives combinatorial formulations for the Wedderburn decomposition of rational group algebras of all $p$-groups of order $p^5$ of $\Phi_{10}$.

\begin{theorem}\label{thm:rationalgroupalgebraPhi10}
	Let $G$ be a group of order $p^5$ such that $G \in \Phi_{10}$.  Then the Wedderburn decomposition of $\mathbb{Q}G$ is as follows:
	\begin{enumerate}
		\item for $p \geq 5$, \[\mathbb{Q}G \cong \bigoplus\mathbb{Q}(G/G') \bigoplus (p+1) M_p(\mathbb{Q}(\zeta_p)) \bigoplus M_{p^2}(\mathbb{Q}(\zeta_p)), ~ \text{and}\]
		\item for $p =3 $, \[\mathbb{Q}G \cong \bigoplus\mathbb{Q}(G/G') \bigoplus M_3(\mathbb{Q}(\zeta_3)) \bigoplus M_3(\mathbb{Q}(\zeta_9)) \bigoplus M_9(\mathbb{Q}(\zeta_3)).\]
	\end{enumerate}
\end{theorem}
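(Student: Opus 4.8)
The plan is to handle $\Phi_{10}$ in the same spirit as $\Phi_7 \cup \Phi_8$ (Theorem~\ref{thm:rationalgroupalgebraPhi7Phi8}), separating the simple components of $\mathbb{Q}G$ according to whether the associated rational representation has $G'$ in its kernel, has $Z(G)$ but not $G'$ in its kernel, or does not contain $Z(G)$ in its kernel. First I would record the structural data for $G \in \Phi_{10}$ from \cite[Subsection 4.5]{RJ}: here $|Z(G)| = p$, $|G'| = p^3$, $\cd(G) = \{1, p, p^2\}$, and $G/Z(G) \cong \Phi_3(1^4)$ (see \cite[Subsection 4.1]{RJ}). The linear characters contribute $\bigoplus \mathbb{Q}(G/G')$ exactly as before, via the bijection between $\lin(G)$ and $\Irr(G/G')$.

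Next I would treat the characters of degree $p$. By Lemma~\ref{lemma:characterPhi7Phi8Phi10}(2), each $\chi \in \Irr^{(p)}(G)$ is the lift of a non-linear irreducible character of $G/Z(G) \cong \Phi_3(1^4)$, a non-abelian group of order $p^4$ and nilpotency class $3$. By Lemma~\ref{lemma:uniqueabelianPhi3Phi9} (applied to the order-$p^4$ case, cf.\ \cite[Corollary 55]{Ram}), $\Phi_3(1^4)$ has a unique abelian subgroup of index $p$, namely $C := C_{G/Z(G)}((G/Z(G))')$, and by \cite[Theorem 3]{Ram} the simple components coming from $\nl(G/Z(G))$ are determined by the cyclic subgroup counts of $C$ and $C/(G/Z(G))'$. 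For $p \geq 5$ one computes $\exp(C) = p$ (all relevant elements have order $p$), so the only cyclotomic field that occurs is $\mathbb{Q}(\zeta_p)$, and counting Galois conjugacy classes (via Lemma~\ref{lemma:Ayoub} and the fact $|\nl(G/Z(G))| = p^2 - p$, with each rational representation having degree $p$) yields $\bigoplus (p+1) M_p(\mathbb{Q}(\zeta_p))$. For $p = 3$ the element orders behave differently — $\Phi_3(1^4)$ at $p=3$ acquires elements of order $9$ in its unique abelian maximal subgroup — so $\exp(C) = 9$, giving an extra component $M_3(\mathbb{Q}(\zeta_9))$ and only $M_3(\mathbb{Q}(\zeta_3))$ from the level-$\zeta_3$ part; I would get the precise multiplicities by the cyclic-subgroup bookkeeping of \cite[Theorem 3]{Ram} applied to $G/Z(G) \cong \Phi_3(1^4)$ for $p=3$.

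Finally, for the degree-$p^2$ characters: by Lemma~\ref{lemma:characterPhi7Phi8Phi10}(3), $(G, Z(G))$ is a Camina pair and $\Irr^{(p^2)}(G)$ is in bijection with $\Irr(Z(G)) \setminus \{1_{Z(G)}\}$; since $Z(G) \cong C_p$ these $p-1$ characters are all Galois conjugate (they share the kernel $1$ and are Galois conjugate by Lemma \cite[Lemma 29]{Ram} applied through $\mu \in \Irr(Z(G))$), so they form a single rational representation of degree $p^2$. By Lemma~\ref{lemma:schurindexpgroup} its Schur index is $1$, so Lemma~\ref{Reiner} gives the single component $M_{p^2}(\mathbb{Q}(\zeta_p))$. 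Assembling the three blocks gives the stated decompositions. The main obstacle I anticipate is the $p = 3$ case: one must carefully verify the element orders in the unique index-$p$ abelian subgroup of $\Phi_3(1^4)$ at $p = 3$ (where the usual "$p \geq 5$" regularity of power maps fails) to get the cyclotomic fields $\mathbb{Q}(\zeta_9)$ versus $\mathbb{Q}(\zeta_3)$ and their exact multiplicities right; everything else is the routine three-block partition used already for $\Phi_7 \cup \Phi_8$.
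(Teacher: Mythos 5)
Your proposal follows essentially the same route as the paper: the three-block split into linear characters (giving $\mathbb{Q}(G/G')$), degree-$p$ characters obtained as lifts from $G/Z(G)\cong\Phi_3(1^4)$ (handled via the unique abelian maximal subgroup of that order-$p^4$ quotient, which is exactly the content of the results in \cite{Ram} that the paper cites), and the degree-$p^2$ characters forming a single Galois class via the Camina-pair bijection with $\Irr(Z(G))\setminus\{1_{Z(G)}\}$, Schur index $1$, and Reiner's lemma. Your anticipation of the $p=3$ anomaly (elements of order $9$ in the abelian maximal subgroup of $\Phi_3(1^4)$, producing the extra $M_3(\mathbb{Q}(\zeta_9))$ component) is also exactly what the paper invokes.

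One arithmetic slip to fix: you assert $|\nl(G/Z(G))|=p^2-p$, but for $\Phi_3(1^4)$ one has $|(G/Z(G))'|=p^2$, hence $p^2$ linear characters and $(p^4-p^2)/p^2=p^2-1$ non-linear ones of degree $p$. Your stated count would yield only $(p^2-p)/(p-1)=p$ Galois classes, contradicting the multiplicity $p+1$ you (correctly) claim; the correct count $p^2-1=(p+1)(p-1)$ is what produces $(p+1)\,M_p(\mathbb{Q}(\zeta_p))$ for $p\geq 5$, and likewise $8=2+6$ characters splitting into the $\zeta_3$- and $\zeta_9$-classes for $p=3$. The value $p^2-p$ is the count for a VZ group of order $p^4$ such as $\Phi_2(1^4)$, not for $\Phi_3(1^4)$.
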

\begin{proof}
	Let $G$ be a group of order $p^5$ such that $G \in \Phi_{10}$. Then $\cd(G)=\{1, p, p^2\}$ and each non-linear irreducible complex character of degree $p$ of $G$ is the lift of some non-linear irreducible complex character of $G/Z(G)$ (see Lemma \ref{lemma:characterPhi7Phi8Phi10}). Further, $G/Z(G) \cong \Phi_3(1^4)$ (see \cite[Subsection 4.1]{RJ}). Thus, from \cite[Corollary 58]{Ram}, the simple components of the Wedderburn decomposition of $\mathbb{Q}G$ corresponding to all the irreducible $\mathbb{Q}$-representations of $G$ whose characters are sum of Galois conjugates of a degree $p$ complex irreducible character contribute  
	\[\bigoplus (p+1)M_p(\mathbb{Q}(\zeta_p))\]
	in $\mathbb{Q}G$ for $p \geq 5$. For $p=3$, the simple components of the Wedderburn decomposition of $\mathbb{Q}G$ corresponding to all the irreducible $\mathbb{Q}$-representations of $G$ whose characters are sum of Galois conjugates of a degree $3$ complex irreducible character contribute  
	\[\bigoplus M_3(\mathbb{Q}(\zeta_3)) \bigoplus M_3(\mathbb{Q}(\zeta_9))\]
	in $\mathbb{Q}G$ (see \cite[Remark 59]{Ram}). Furthermore, the pair $(G, Z(G))$ is a Camina pair and there is a bijection between the sets $\Irr^{(3^2)}(G)$ and $\Irr(Z(G)) \setminus \{1_{Z(G)}\}$ (see Lemma \ref{lemma:characterPhi7Phi8Phi10}). Since $Z(G) \cong C_3$, from Lemma \ref{Reiner}, 
	\[\bigoplus M_{3^2}(\mathbb{Q}(\zeta_3))\]
	is the only simple component of the Wedderburn decomposition of $\mathbb{Q}G$ corresponding to the irreducible $\mathbb{Q}$-representation of $G$ whose character is the sum of Galois conjugates of a degree $3^2$ complex irreducible character. This completes the proof of Theorem \ref{thm:rationalgroupalgebraPhi10}.
\end{proof}
Finally, we conclude this section by presenting Corollary \ref{cor:isogpalgPhi_7,8,10}.
\begin{corollary}\label{cor:isogpalgPhi_7,8,10}
	Let $G$ and $H$ be two isoclinic groups of order $p^5$ such that $G \in \Phi_{7} \cup \Phi_{8} \cup \Phi_{10}$. Then $\mathbb{Q}G \cong \mathbb{Q}H$ if and only if $G/G' \cong H/H'$.
\end{corollary}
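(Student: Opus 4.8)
The plan is to deduce the corollary directly from the explicit Wedderburn decompositions recorded in Theorems \ref{thm:rationalgroupalgebraPhi7Phi8} and \ref{thm:rationalgroupalgebraPhi10}. First I would note that, since $G$ and $H$ are isoclinic and $G\in\Phi_7\cup\Phi_8\cup\Phi_{10}$, the group $H$ lies in the very same family $\Phi_i$, because $\Phi_1,\dots,\Phi_{10}$ are isoclinism families. Hence, applying Theorem \ref{thm:rationalgroupalgebraPhi7Phi8} when $i\in\{7,8\}$ and Theorem \ref{thm:rationalgroupalgebraPhi10} when $i=10$ (with its two subcases $p\geq 5$ and $p=3$), I can write
\[
\mathbb{Q}G\cong\mathbb{Q}(G/G')\oplus N,\qquad \mathbb{Q}H\cong\mathbb{Q}(H/H')\oplus N,
\]
where $N$ is one and the same semisimple $\mathbb{Q}$-algebra: a direct sum of full matrix rings $M_p(\mathbb{Q}(\zeta_{\bullet}))$ and $M_{p^2}(\mathbb{Q}(\zeta_{\bullet}))$ whose shape depends only on $i$ and on $p$, not on the individual groups.

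For the ``if'' direction this is already enough: if $G/G'\cong H/H'$ then $\mathbb{Q}(G/G')\cong\mathbb{Q}(H/H')$ by the Perlis--Walker theorem, and adjoining the common summand $N$ yields $\mathbb{Q}G\cong\mathbb{Q}H$.

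For the ``only if'' direction, suppose $\mathbb{Q}G\cong\mathbb{Q}H$. Because $p$ is odd, every matrix ring occurring in $N$ has size a positive power of $p$ and is therefore noncommutative, whereas every simple component of $\mathbb{Q}(G/G')$ is a field $\mathbb{Q}(\zeta_d)$; so $\mathbb{Q}(G/G')$ is exactly the sum of the commutative simple components of $\mathbb{Q}G$, and likewise for $H$. By the uniqueness part of the Artin--Wedderburn theorem, an algebra isomorphism $\mathbb{Q}G\to\mathbb{Q}H$ permutes simple components and preserves commutativity, so it restricts to an isomorphism $\mathbb{Q}(G/G')\cong\mathbb{Q}(H/H')$. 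Finally $G/G'$ and $H/H'$ are finite abelian groups, so by the Perlis--Walker theorem (Theorem \ref{Perlis-walker}) this isomorphism forces the two groups to have equally many cyclic subgroups of each order; since for a finite abelian group the numbers of cyclic subgroups of each order determine the isomorphism type (for a $p$-group they are equivalent to the sequence of partial sums $\sum_i\min(\lambda_i,k)$ of its type $\lambda$, hence to $\lambda$ itself), I conclude $G/G'\cong H/H'$.

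The only steps needing genuine care are (i) the observation that isoclinic groups of order $p^5$ sit in the same $\Phi_i$, which is precisely what makes $N$ literally the \emph{same} algebra for $G$ and for $H$ — without this the argument collapses; and (ii) the classical fact that the rational group algebra of a finite abelian group determines that group, which I would invoke (with the one-line type-versus-partial-sums justification above) rather than reprove. Everything else is bookkeeping against the decompositions already established, and no new character-theoretic input is required.
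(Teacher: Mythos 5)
Your proposal is correct and follows the same route as the paper, whose proof simply cites Theorems \ref{thm:rationalgroupalgebraPhi7Phi8} and \ref{thm:rationalgroupalgebraPhi10}; you have merely made explicit the steps the paper leaves implicit (that isoclinic groups of order $p^5$ share a family $\Phi_i$ and hence the same noncommutative summand, that the commutative part of the decomposition is exactly $\mathbb{Q}(G/G')$, and that the rational group algebra of a finite abelian $p$-group determines the group). All of these details check out.
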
 \begin{proof}
It follows from Theorems \ref{thm:rationalgroupalgebraPhi7Phi8} and \ref{thm:rationalgroupalgebraPhi10}.
\end{proof}

\section{Computational verification} \label{sec:ComputationalVerifiaction} 

All inequivalent irreducible representations of a small order pc-group over an arbitrary field can be constructed using the intrinsics \texttt{IrreducibleModulesSchur} and \texttt{IrreducibleRepresentationsSchur} in \textsc{Magma}. Constructing irreducible rational matrix representations using required pairs provides an alternative approach, particularly for manual computations when a required pair is known. To verify the computational results presented in Section \ref{section:rational representation}, we provide a \textsc{Magma} implementation in the \texttt{GitHub} repository \cite{Repository}. The script \texttt{RationalRepsReqPairs.m} facilitates the reproduction and validation of our findings related to rational representations. It includes functions to construct irreducible rational matrix representations, verify required pairs, and compute all inequivalent irreducible rational representations.

	\section{Acknowledgments}
	The first author acknowledges the financial support of the University Grants Commission, Government of India. We thank E.A. O'Brien for helping us to write the \textsc{Magma} code to computationally verify several results of this article, carefully reviewing multiple versions of the paper, and engaging in detailed discussions on its content. We also acknowledge the University of Auckland for providing remote access to its computational facilities.

\end{document}